\pgfplotsset{compat=1.10}
\newcommand{\T}{^{\sf T}}
\DeclareMathOperator{\sdp}{sdp}
\DeclareMathOperator{\sign}{sign}
\DeclareMathOperator{\Span}{span}
\newcommand{\ignore}[1]{}
\newcommand{\N}{\mathbb{N}}
\newcommand{\Z}{\mathbb{Z}}
\newcommand{\R}{\mathbb{R}}
\newcommand{\C}{\mathbb{C}}
\newcommand{\Imub}{\mathcal I_{\mathrm{MUB}}}
\newcommand{\id}{\mathbf{\mathrm{id}}}
\let\OLDthebibliography\thebibliography
\renewcommand\thebibliography[1]{
  \OLDthebibliography{#1}
  \setlength{\parskip}{0pt}
  \setlength{\itemsep}{0pt plus 0.3ex}
}
\renewcommand{\sup}{\mathrm{sup}}
\renewcommand{\min}{\mathrm{min}}
\renewcommand{\max}{\mathrm{max}}
\renewcommand{\epsilon}{\varepsilon}
\def\01{\{0,1\}}
\renewcommand{\mathbf}{\bm}
\newtheorem{defin}{Definition}[section]
\newtheorem{definition}[defin]{Definition}
\newtheorem{proposition}[defin]{Proposition}
\newtheorem{theorem}[defin]{Theorem}
\newtheorem{corollary}[defin]{Corollary}
\newtheorem{lemma}[defin]{Lemma}
\newtheorem*{claim*}{Claim}
\newtheorem*{conjecture*}{Conjecture}
\newtheorem{remark}[defin]{Remark}
\newenvironment{example}
  {\pushQED{\qed}\examplex}
  {\popQED\endexamplex}
\newcommand{\ncx}{\langle {\bf x}\rangle}
\newcommand{\MM}{{\mathcal M}}
\newcommand{\MI}{{\mathcal I}}
\newcommand{\A}{\mathcal A}
\begin{document}

\title{Mutually unbiased bases: polynomial optimization and symmetry}
\author{Sander Gribling}
\affiliation{Department of Econometrics and Operations Research, Tilburg University, Tilburg, The
Netherlands}
\email{s.j.gribling@tilburguniversity.edu}
\orcid{0000-0002-6817-2971}

\author{Sven Polak}
\affiliation{Department of Econometrics and Operations Research, Tilburg University, Tilburg, The Netherlands}
\email{s.c.polak@tilburguniversity.edu}
\orcid{0000-0002-4287-6479}

\date{}

\begin{abstract}
A set of $k$ orthonormal bases of $\C^d$ is called mutually unbiased if $|\langle e,f\rangle |^2 = 1/d$ whenever $e$ and $f$ are basis vectors in distinct bases. A natural question is for which pairs $(d,k)$ there exist~$k$ mutually unbiased bases in dimension $d$. 
The (well-known) upper bound $k \leq d+1$ is attained when~$d$ is a power of a prime. For all other dimensions it is an open problem whether the bound can be attained. Navascu\'es, Pironio, and Ac\'in showed how to reformulate the existence question in terms of the existence of a certain $C^*$-algebra. This naturally leads to a noncommutative polynomial optimization problem and an associated hierarchy of semidefinite programs. The problem has a symmetry coming from the wreath product of $S_d$ and~$S_k$. 

We exploit this symmetry (analytically) to reduce the size of the semidefinite programs making them (numerically) tractable. A key step is a novel explicit decomposition of the $S_d \wr S_k$-module $\C^{([d]\times [k])^t}$ into irreducible modules. We present numerical results for small $d,k$ and low levels of the hierarchy. In particular, we obtain sum-of-squares proofs for the (well-known) fact that there do not exist $d+2$ mutually unbiased bases in dimensions~$d=2,3,4,5,6,7,8$. Moreover, our numerical results indicate that a sum-of-squares refutation, in the above-mentioned framework, of the existence of more than $3$ MUBs in dimension $6$ requires polynomials of total degree at least~$12$.  
\end{abstract}

\maketitle

\section{Introduction} 

Let $d \in \N_{\geq 2}$. A set of $k$ orthonormal bases of $\C^d$ is called \emph{mutually unbiased} if 
\begin{equation}\tag{$*$} \label{eq: MUB property}
|\langle e,f\rangle |^2 = \frac{1}{d}
\end{equation}
whenever $e$ and $f$ are basis vectors in \emph{distinct} bases. A natural question is for which pairs $(d,k)$ there exist $k$ mutually unbiased bases in dimension $d$. A dimension counting argument shows that there can be at most $d+1$ mutually unbiased bases (MUBs) in dimension $d$.\footnote{To see this, consider the map $u \mapsto uu^* - I_d/d$ between unit vectors in $\C^d$ and traceless Hermitian $d \times d$ matrices. This map sends $k$ MUBs to $k$ pairwise-orthogonal $(d-1)$-dimensional subspaces of the $(d^2-1)$-dimensional space of traceless Hermitian $d \times d$ matrices. Hence $k \leq \frac{d^2-1}{d-1}=d+1$.}
No other general upper bounds are known. When $d$ is a power of a prime number it is known that there exist $d+1$ MUBs~\cite{Ivanovic81,WF89}. However, for all $d$ that are not a power of a prime the question is wide open. Even for the first such number, dimension~$6$, it is not known whether there exist more than~$3$ such bases, despite extensive numerical search~\cite{BW09}. It is known however that certain sets of 3 MUBs are \emph{maximal}, i.e., they can not be extended, see~\cite{Gra04}. Zauner conjectured that there do not exist~$4$ MUBs in dimension~$6$~\cite{Zauner}. Recently, this (widely believed) conjecture was highlighted as one of five important open problems in quantum information theory \cite{HRLZ22}.

We mention some other constructions of sets of MUBs. If $k$ MUBs exist in dimensions $d_1$ and $d_2$, then $k$ MUBs exist in dimension $d_1 d_2$. Hence, if $d = p_1^{n_1} p_2^{n_2} \cdots p_k^{n_k}$ for prime numbers $p_1,\ldots, p_k$ and integers $n_1,\ldots,n_k$ with $p_1^{n_1} < p_2^{n_2} < \ldots < p_k^{n_k}$, then there exist at least $p_1^{n_1}+1$ MUBs in dimension $d$. This strategy of ``reducing to prime-powers'' does not always lead to the largest number of MUBs in a given non-prime power dimension. An alternative construction was proposed in~\cite{WocjanBeth05}: $k$ mutually orthogonal Latin squares of order $n$ can be used to construct $k+2$ MUBs in dimension $n^2$. For dimension $d = 26^2 =2^2 13^2$ this yields $6$ MUBs (instead of $2^2+1$). Finally, for any dimension $d$, if there exist $d$ MUBs, then there exist $d+1$ MUBs~\cite{Weiner13}. 

\paragraph{Applications of mutually unbiased bases.} A main feature of a collection of MUBs is that if one prepares a state in one of the bases, the measurements of this state with respect to another basis  are predicted to occur uniformly at random. This property gives rise to many applications of MUBs in quantum information theory, for example in tomography algorithms~\cite{WF89}, cryptographic protocols~\cite{bb84}, and entanglement detection~\cite{SHBAH12}. We refer to~\cite{BBELTZ07} for an excellent survey on mutually unbiased bases. Our work is motivated by Zauner's conjecture on the existence of MUBs in dimension $6$. From the perspective of applications, a construction of $7$ MUBs in dimension $6$ would yield optimal schemes in several settings. A proof of non-existence could help us understand what is special about the number/dimension $6$. In the area of combinatorial designs the same number plays a special role, for instance there does not exist a finite affine plane of order $6$~\cite{Bru49}, nor a pair of mutually orthogonal Latin squares of order $6$~\cite{Tar01}. We refer to, e.g., \cite{Woo06} for a discussion about the connections between MUBs and certain combinatorial designs.

\paragraph{$C^*$-algebraic formulation.}

Navascu\'es, Pironio, and Ac\'in~\cite{NPA12} gave a $C^*$-algebraic formulation of the above problem: $k$ MUBs exist in dimension $d$ if and only if a certain $C^*$-algebra has a representation with $I \neq 0$. We describe this formulation in detail in \cref{sec: algebra}. It can be viewed from the perspective of tracial noncommutative polynomial optimization and this leads to semidefinite programming relaxations.
Proving infeasibility of such a relaxation would prove non-existence of $k$ MUBs in dimension $d$. Here we follow this approach and exploit the symmetries in the resulting semidefinite programs. 

\paragraph{Symmetry in semidefinite programming.}
Symmetry is widely used in semidefinite programming and polynomial optimization~\cite{BGSV12}. The usage of symmetry goes back to Delsarte~\cite{D73}, who formulated a linear programming bound for codes which was later shown~\cite{S79} to be a symmetry-reduced version of a slight strengthening of Lov\'asz' $\vartheta$-function. Symmetry in semidefinite programs can be used to reduce their size by block-diagonalizing the involved matrix $*$-algebras. Indeed, it is a consequence of Schur's lemma that a complex matrix $*$-algebra $\mathcal A$ is $*$-isomorphic to a direct sum of full matrix algebras. That is, there exists a $*$-isomorphism $\phi$ such that
\begin{equation} \label{eq: def phi}
\phi(\mathcal A) =\bigoplus_{i=1}^k \C^{m_i \times m_i}.
\end{equation}
Constructing this $*$-isomorphism is often challenging. For numerical methods to do so, see~\cite{MKKK10,KDP11,RMB21}. Via the regular $*$-representation one can obtain a representation of $\mathcal A$ in an (often) smaller matrix algebra~\cite{KPS07}. We focus on the setting where the algebra consists of matrices that are invariant under a permutation action of a (finite) group. That is, we consider the matrix $*$-algebra $\mathcal A = (\C^{Z \times Z})^G$ of $G$-invariant $Z \times Z$ matrices where the group $G$ acts on the set $Z$. In this setting one can construct the $*$-isomorphism from \eqref{eq: def phi} explicitly. To do so, one needs to decompose the $G$-module $\C^Z$ into a direct sum of irreducible $G$-modules (see \cref{sec:symmetry block} for the general theory). Group invariance has been used previously in (commutative) 
polynomial optimization, see for example~\cite{GP04,RTAL13}.  More generally, group symmetry in semidefinite programs has applications in many areas, for example in coding theory~\cite{S05}, combinatorics~\cite{KPS07, KPS09, KS10}, and in geometry~\cite{BV08}.
Finding the $*$-isomorphism (i.e., the decomposition of $\C^Z$) remains challenging even in the group-invariance setting.

Below we first discuss the symmetry that is present in the problem of determining the largest number of MUBs in a given dimension (the MUB problem), and then we give an overview of some of the prior work and our results. 

\paragraph{The symmetry in the MUB problem.} For $n \in \N$, let $S_n$ be the symmetric group on $n$ elements. The MUB property \eqref{eq: MUB property} of a set of $k$ MUBs in dimension~$d$ is naturally invariant under relabeling the bases (an $S_k$ action) and, for each basis, relabeling the basis elements ($S_d$ actions). These actions together precisely define an action of the wreath product $S_d \wr S_k$. One can show that the $C^*$-algebraic formulation and the related semidefinite programming (SDP) relaxations are invariant under this action. Since SDPs are convex, we  may thus restrict our attention to solutions that are invariant under the $S_d \wr S_k$ action. By reducing the number of variables, it yields a first reduction in the complexity of the semidefinite programs. It also implies that the matrix variables in the semidefinite program are group-invariant; by \eqref{eq: def phi} this allows a second reduction in the complexity of the semidefinite programs. Constructing the $*$-isomorphism $\phi$ analytically is the main contribution of our work, see \cref{sec:reprset wreath}.

\paragraph{Symmetry reductions involving the wreath product.} 

In this paper we exploit group symmetry where the group is a wreath product of symmetric groups. Elements of the wreath product~$S_d \wr S_k$ are of the form $(\bm{\alpha},\beta) = ((\alpha_1,\ldots,\alpha_k), \beta)$ where $\alpha_i \in S_d$ for each $i \in [k]$ and $\beta \in S_k$. There are two canonical actions of the wreath product of~$S_d \wr S_k$ (cf., e.g.,~\cite{CST13}). The first action is an action on~$[d]^k$ defined as follows:
\[
((\alpha_1,\ldots,\alpha_k), \beta) \cdot (z_i)_{i \in [k]} = (\alpha_{\beta^{-1} (i)}\cdot z_{\beta^{-1}(i)} )_{i \in [k]}\quad \text{ for $(z_i)_{i \in [k]} \in [d]^k$}. 
\]
This action is also called the primitive action of~$S_d \wr S_k$ on~$[d]^k$ and is used extensively in coding theory related SDPs: see the fundamental work of Schrijver~\cite{S05} and subsequent works that rely on representation theory~\cite{Gij09,LPS17}. The symmetry reduction for the primitive action is by now well understood, see for example \cite{P19} for a general exposition of reducing semidefinite programs invariant under~$G \wr S_n $ acting on~$V^{\otimes n}$ (where~$G$ acts on~$V$).
In this paper we study a second canonical action of~$S_d \wr S_k$, which is an action on $[d] \times [k]$. It is defined as follows:
\[
((\alpha_1,\ldots,\alpha_k), \beta) \cdot (i,j) =  (\alpha_{\beta(j)}(i),\beta(j))  \quad \text{ for $(i,j) \in [d]\times[k]$}. 
\]
This action is known as the imprimitive action of~$S_d \wr S_k$ on~$[d] \times [k]$.
We give an analytic decomposition of $\C^{([d] \times [k])^t}$ into~$S_d \wr S_k$-irreducible submodules (for fixed~$t$), which is a new result for $t>1$. For $t=1$ the separate actions of $S_d$ on~$\C^{[d]^t}$ and $S_k$ on~$\C^{[k]^t}$ are transitive and the analytic decomposition can be obtained for example from~\cite[Thrm.~3.1.1]{CST13}. 

\paragraph{Representation theory of the wreath product $S_d \wr S_k$.}
It is classical theory to describe the irreducible modules for wreath products of finite groups, see for example~\cite{Ker71, MacDonald80}. In particular, the irreducible modules for the wreath product $S_d \wr S_k$ are known. However, as mentioned before, decomposing an arbitrary wreath-product-module into its irreducible components often remains challenging. Recently, certain ``permutation modules'' for wreath products have been studied (see \cref{sec: iso to perm module} for their definition), and methods to derive the multiplicities of the irreducible modules in these modules have been given \cite{CT03, Green19}. However, in general there is no explicit description of the homomorphisms from the irreducible modules to the permutation modules (which is what we need for our symmetry reduction), see~\cite{Green19} for a partial description. Our symmetry reduction involves constructing explicit homomorphisms for ``L-shaped'' permutation modules. 

\paragraph{Related prior work.} In~\cite{BW10} the MUB problem is considered from the perspective of solving a system of polynomial equations in commutative variables. The authors show that both techniques based on Gr\"obner bases and techniques based on polynomial optimization can be used to rule out the existence of certain MUBs in small dimensions. An advantage of this approach is its flexibility: one can for example fix a set of MUBs and ask whether it can be extended to a larger set of MUBs. A disadvantage is the number of variables (one per real variable). Noncommutative polynomial optimization was used in~\cite{NPA12} and also in~\cite{ABMP18}. The latter gives a reformulation of the MUB problem in terms of a game based on quantum random access codes: a certain winning probability is achievable if and only if there exist $k$ MUBs in dimension $d$. They then use an SDP hierarchy of upper bounds on the winning probability due to Navascu\'es and V\'ertesi~\cite{NV15} and numerically exploit the symmetry in the corresponding SDPs. Using this approach they rule out the existence of $d+2$ MUBs in dimensions $d=3,4$. A symmetry reduction for their SDPs can be obtained by decomposing certain modules of the wreath product~$S_d \wr S_k$, just as in our case. However, the moment matrices appearing in their approach have a different index set from our moment matrices, hence our symmetry reduction is not directly applicable. We leave it to future work to combine our analytical symmetry reduction with their approach.

\paragraph{Contributions.} We obtain a novel explicit decomposition of the $S_d \wr S_k$-module $\C^{([d]\times [k])^t}$ into irreducible modules. In particular, this allows us to obtain the analytic symmetry reduction of the SDP relaxations coming from the C$^*$-algebraic formulation of Navascu\'es, Pironio, and Ac\'in~\cite{NPA12}. For fixed $t \in \N$, the size of the symmetry reduced SDP is independent of $d$ and $k$ whenever $d,k \geq 2t$. We provide an implementation of the symmetry reduced SDPs and use it to obtain numerical sum-of-squares certificates for the non-existence of $d+2$ MUBs in dimensions $d=2,3,4,5,6,7,8$. Moreover, our numerical results indicate that a sum-of-squares refutation (in the context of~\cite{NPA12}) of the existence of more than $3$ MUBs in dimension $6$ requires polynomials of total degree at least~$12$. See Table~\ref{table: introtable} for an overview of these numerical results. We use the SDP-solvers SDPA-DD or SDPA-GMP~\cite{Nakata10,YFFKNN12} (when a solver returns an unknown result status we use a higher-precision solver). 
\begin{table}[ht]\centering  \small
    \begin{tabular}{| r | r | r | r| r|r| r| r || l |}
    \hline
    $d$ & $k$ & $t$ &  size=$(dk)^{\lfloor t\rfloor}$&\#vars & \multicolumn{1}{r|}{$\#$linear} &  \multicolumn{2}{c||}{block sizes} & result   \\ \cline{7-8}
     & & & & & constraints & sum & max &   \\\hline
2 & 4 & 4.5 & 4096 & 7 & 8  & 472 & 85 &\textbf{infeasible}  \\
3 & 5 & 4.5 & 50625 & 7 & 2  & 1259 & 142&  \textbf{infeasible}\\
4 & 6 & 5 & 7962624 & 38 & 2  & 6374 & 389 &  \textbf{infeasible}\\
5 & 7 & 5 & 52521875 & 38 & 2  & 6732 & 389 & {{\textbf{infeasible}}}  \\
 6 & 8 & 5 & 254803968 & 38 & 2  & 6820 & 389& \textbf{infeasible} 
\\ 
7 & 9 & 5 & 992436543 & 38 & 2  &6830 & 389 & \textbf{infeasible} 
\\ 
8 & 10 & 5 & 3276800000 & 38 & 2 & 6831 & 389 & \textbf{infeasible} \\ \hline 
6 & 4 & 5.5 & 7962624 & 43 & 3 & 8049 & 577 & feasible 
\\ 6 & 7 & {$5.5$} &  130691232& 62 & 3 & 18538 & 1107 & feasible  
\\
\hline
    \end{tabular} 
      \caption{For various $d,k,t$, we give the number of rows of the original $t$-th level SDP, and for the symmetry reduced SDP the number of variables, the number of linear constraints on the variables, the sum and max of the block sizes, and the solver status.} \label{table: introtable}
\end{table} 

\paragraph{Implementation.}
We use the programming language Julia to compute the symmetry-reduced semidefinite programs. The code is available here: 
\begin{center}
\url{https://github.com/MutuallyUnbiasedBases/}
\end{center}
Our code roughly consists of two parts. We first compute a representative set (cf.~\cref{prop: symmetry reduction}) for the representation at hand. We do this both for the action of $S_d \wr S_k$ on (homogeneous) noncommutative polynomials in variables $x_{i,j}$ with $i \in [d]$, $j \in [k]$, and for the action of $S_k$ on (homogeneous) noncommutative polynomials in variables $x_1,\ldots,x_k$. This first part does not use the additional structure present in the MUB problem and is therefore also applicable to other problems that exhibit the same symmetry. 
The second part then consists of computing the entries of the symmetry-reduced SDP. For this we do use the specific structure of the MUB problem (and the previously computed representative set), see \cref{sec: L value} for more details.

\subsection{Organization}
 
In \cref{sec:symmetry} we provide preliminaries about (noncommutative) polynomial optimization and the general framework of symmetry reductions of group-invariant semidefinite programs. In \cref{sec: algebra} we state the $C^*$-algebraic formulation of the MUB problem of Navascu\'es, Pironio, and Ac\'in, and we discuss properties of the related semidefinite programming relaxations.  To introduce the well-known representation theory of the symmetric group, we provide in \cref{sec:reprset sk} the symmetry reduction of noncommutative polynomial optimization problems in variables $x_1,\ldots,x_k$ that are invariant under the action of $S_k$. We present this from the perspective of representation theory (\cref{sec: prelim Sk,reprsetSk}), but we also illustrate the theory in \cref{sec: example 32} by presenting a small example (degree-$2$ polynomials in $x_1,x_2,x_3$)) in the language of noncommutative polynomials: the symmetry reduction amounts to choosing a suitable basis of the space of polynomials. 

The symmetry reduction of $S_k$ serves as a first step towards that of $S_d \wr S_k$ in two ways. First, it can be viewed as a special case of the full $S_d \wr S_k$ action on the variables $x_{i,j}$ ($i \in [d], j \in [k]$) by restricting to polynomials in the variables $x_{1,j}$ ($j \in [k]$). We can thus use it to symmetry-reduce a relaxation of the SDPs presented in \cref{sec: SDP} where we restrict to polynomials in $x_{1,j}$ ($j \in [k]$), in \cref{sec: numerics} we will see that these relaxations sometimes already exclude the existence of MUBs. Second, the representation theory of $S_d \wr S_k$ naturally builds on that of the symmetric group. Indeed, in \cref{sec: rep wreath} we describe known (irreducible) $S_d \wr S_k$-modules that can be constructed from (irreducible) modules for the symmetric group. 

Our main result, the analytic symmetry reduction of the $S_d \wr S_k$-invariant semidefinite programs, is obtained in \cref{sec:reprset wreath}. At a high level, we obtain our symmetry reduction by decomposing the $S_d \wr S_k$-module $\C\ncx_{=t}$ given by (homogeneous) degree-$t$ polynomials in the variables $x_{i,j}$ ($i \in [d], j \in [k]$) into irreducible modules. We do so by first decomposing $\C\ncx_{=t}$ into orbits, we then relate these orbits to known modules of $S_d \wr S_k$. It turns out that these modules have a special structure (cf.~\cref{rem: our W is nice}) which allows us to give an explicit decomposition of them into irreducible modules. To illustrate the (rather technical) theory, we give the explicit symmetry-reduction for the case $d=2$, $k=3$, $t=2$ in the language of noncommutative polynomials in \cref{sec: example 232}. In \cref{sec: iso to perm module} we relate the results of \cref{sec:reprset wreath} to the literature: we show they can be viewed as a novel explicit decomposition of an ``L-shaped'' permutation module into irreducible $S_d \wr S_k$-modules. 

Finally, in \cref{sec: numerics} we provide numerical results obtained using our symmetry reduction.

\section{Preliminaries on polynomial optimization \& symmetry} \label{sec:symmetry}

The use of sum-of-squares and moment techniques for polynomial optimization problems goes back to the works of Parrilo and Lasserre~\cite{Par00,Las01}. We refer to, e.g., \cite{Las09,Lau09} for more information about commutative polynomial optimization. In the noncommutative setting such techniques were first introduced for eigenvalue optimization by Navascu\'es, Pironio, and Ac\'in~\cite{NPA10}. The extension to the tracial setting can be found in~\cite{BCKP13,KP16}. The method of symmetry reduction of group-invariant matrix algebras that we describe in \cref{sec:symmetry block} is well known, see for example \cite{Gij09} and \cite{P19} for complete proofs of the statements we use here. 

\subsection{Noncommutative polynomial optimization}
We use $\R\langle x_1,\ldots,x_n\rangle$ to denote the ring of noncommutative polynomials in variables $x_{i}$ for $i \in [n]$. Whenever the number of variables is clear from context, we abbreviate $\R\langle x_1,\ldots,x_n\rangle$ to $\R\ncx$. We equip $\R\ncx$ with an involution $^*$ (a linear map) which acts on a monomial $x_{i_1} x_{i_2} \cdots x_{i_\ell} \in \ncx$ by reversing the order of the variables: $(x_{i_1} x_{i_2} \cdots x_{i_\ell})^* = x_{i_\ell} x_{i_{\ell-1}} \cdots x_{i_1}$. In particular, we have $x_i^* = x_i$ for all $i$. A polynomial $p$ is Hermitian if $p^* = p$. 
Let $S$ be a set of Hermitian noncommutative polynomials. The matrix positivity domain associated to $S$ is  
\[
\mathcal D(S) = \cup_{d \in \N} \{(X_1, \ldots, X_n) \in (\mathrm H^d)^n : g({\mathbf X}) \succeq 0 \text{ for all } g \in S\}
\]
where $\mathrm H^d$ is the space of Hermitian $d \times d$ matrices  
and for a $C^*$-algebra $\A$ we let 
\[
\mathcal D_{\A}(S):=\{(X_1,\ldots, X_n) \in \A^n: X_i^* = X_i \text{ for } i \in [n], \  g({\mathbf X}) \succeq 0 \text{ for all } g \in S\}
\]
be its $C^*$-algebraic analogue. One similarly defines a matrix variety and its $C^*$-algebraic analogue for a set $T$ of noncommutative polynomials: 
\begin{align*}
\mathcal V(T) &= \cup_{d \in \N} \{(X_1, \ldots, X_n) \in (\mathrm H^d)^n : h({\mathbf X}) = 0 \text{ for all } h \in T\} \\
\mathcal V_{\A}(T) &= \{(X_1, \ldots, X_n) \in \A^n : X_i^* = X_i \text{ for } i \in [n], \ h({\mathbf X}) = 0 \text{ for all } h \in T\}.
\end{align*}
A tracial polynomial optimization problem over $\mathcal D_{\A}(S) \cap \mathcal V_\A(T)$ can be written as 
\begin{equation} \label{eq: tracial pop}
    \inf_{\A,\tau, \mathbf X}  \{ \tau(f(\mathbf X))  :\ \tau \text{ is a tracial state on } \A,\ \mathbf X \in\mathcal D_{\A}(S) \cap \mathcal V_\A(T).\}
\end{equation}
A fundamental result in the theory of noncommutative polynomial optimization is a reformulation of the above in terms of linear functionals acting on the vector space of polynomials in noncommutative variables $x_1,\ldots, x_n$. That is, one considers the following (related) problem
\begin{equation}
    \inf \{\, L(f) :\ L\in \R\ncx^* \text{ tracial},\ L(1) = 1, \ L \geq 0 \text{ on }\MM(S), \ L=0 \text{ on }\MI(T)\, \}. \label{eq:full}
\end{equation}
Here $\MM(S)$ is the \emph{quadratic module} associated to the set $S$, i.e., the cone of polynomials generated by polynomials of the form $p^* g p$ for $g \in S$, $p \in \R\ncx$, and $\MI(T)$ is the \emph{ideal} associated to $T$, i.e., the vector space of polynomials of the form $p h$ for $h \in T$, $p \in \R\ncx$. The problems \eqref{eq: tracial pop} and \eqref{eq:full} are equivalent whenever the Minkowski sum $\MM(S)+\MI(T)$ is \emph{Archimedean}\footnote{$\MM(S)+\MI(T)$ is Archimedean whenever there exists an $R \in \R$ such that $R^2- \sum_{i \in [n]} x_i^* x_i \in \MM(S)+\MI(T)$. This property will hold for the polynomial optimization problems we consider in this work.}. Proofs of this equivalence can be found in \cite{NPA12,BCKP13}. One obtains a hierarchy of lower bounds by considering polynomials of degree at most $2t$ (for $t \in \N$):
\begin{equation}\label{eq:ft}
 f_t = \inf \{\, L(f) :\ L\in \R\ncx^*_{2t} \text{ tracial},\ L(1) = 1, \ L \geq 0 \text{ on }\MM(S)_{2t}, \ L=0 \text{ on }\MI(T)_{2t}\, \}, 
\end{equation}
where 
\begin{align*}
\MM(S)_{2t} &= \mathrm{cone}\{p^* gp : g \in S, p \in \R\ncx, \ \mathrm{deg}(p^*gp) \leq 2t\}, \\
\MI(T)_{2t} &= \mathrm{cone}\{p h : h \in T, p \in \R\ncx, \ \mathrm{deg}(ph) \leq 2t\}.
\end{align*}
This hierarchy converges to \cref{eq:full} whenever $\MM(S)+\MI(T)$ is Archimedean. Each relaxation~$f_t$ can be expressed as a semidefinite program (cf.~\cref{eq: positivity vs moment matrix}). 

\subsection{Group-invariant polynomial optimization problems.} \label{sec: group invariant pop}
Let $G$ be a finite group acting on $[n]$. We let $G$ act on noncommutative variables $x_1,\ldots, x_n$ by letting $G$ act on the indices: $\sigma \cdot x_i = x_{\sigma(i)}$ for all $\sigma \in G, i \in [n]$. We extend the action of $G$ to monomials by letting $\sigma$ act on each of the variables, e.g., $\sigma \cdot (x_i x_j) = x_{\sigma(i)} x_{\sigma(j)}$, and we further extend it linearly to $\R\ncx = \R\langle x_1,\ldots,x_n\rangle$. 
 We define its action on $n$-tuples $(x_1,\ldots, x_n) \in \A^n$ similarly: $\sigma \cdot (x_1,\ldots,x_n) = (x_{\sigma(1)},\ldots, x_{\sigma(n)})$. For $\sigma \in G$ and $L \in \R\ncx^*$ we define $\sigma \cdot L$ via $\sigma \cdot L(p) = L(\sigma \cdot p)$ for all $p \in \R\ncx$, this defines an action of $G$ on $\R\ncx^*$. We say $L \in \R\ncx^*$ is \emph{$G$-invariant} if $\sigma \cdot L = L$, i.e., if $L(p) = L(\sigma \cdot p)$ for all $\sigma \in G$ and $p \in \R\ncx$. Let $\R\ncx^G$ be the subspace of $G$-invariant polynomials.

In the polynomial optimization problems that we consider, there will be a finite group $G$ such that the objective $f$ is invariant under the action of $G$ (i.e., $f \in \R\ncx^G$) and such that whenever $L$ is feasible for the moment relaxation \eqref{eq:ft}, then also $\sigma \cdot L$ is feasible for every $\sigma \in G$. Note that $\sigma \cdot L$ will have the same objective value $\sigma \cdot L(f) = L(f)$ since $f$ is $G$-invariant.
As the feasible region in~$\eqref{eq:ft}$ is convex and~$G$ is finite, it is easy to verify that the linear functional $\overline L = \tfrac{1}{|G|} \sum_{\sigma \in G} \sigma \cdot L$ is a $G$-invariant feasible solution with the same value. Hence we may assume that the optimum~$L$ in~\eqref{eq:ft} is~$G$-invariant.
Note that $G$-invariant linear functionals are completely determined by their values on $\R\ncx^G$. This gives a first reduction in the size of the SDP by reducing the number of variables. As mentioned before, a second reduction can be obtained by exploiting the group invariance of the algebra generated by the matrix variables in the SDP. In the next section we recall the general theory. 

\subsection{Symmetry reduction of group-invariant matrix algebras} \label{sec:symmetry block}

Let $G$ be a finite group. If $G$ acts  on a finite-dimensional complex vector space $V$, we call $V$ a $G$-module.\footnote{A \emph{group action} of a group $G$ on a set~$Z$ is a map $G \times Z \to Z$: $(g,z) \mapsto g \cdot z$ which satisfies $1 \cdot z = z$ and~$g_1 \cdot (g_2 \cdot z) = (g_1g_2)\cdot z$ for all~$g_1,g_2 \in G$ and~$z\in Z$. When~$Z$ is a (complex) linear space, we additionally assume~$g\cdot (z_1+z_2) = g\cdot z_1 + g \cdot z_2$ and~$g\cdot (\alpha z)= \alpha (g \cdot z) $ for each~$z_1,z_2 \in Z$, $g \in G$ and~$\alpha \in \C$.} Suppose that~$V$ is equipped with a $G$-invariant inner product $\langle \cdot,\cdot \rangle$. Let $\mathrm{End}_G(V)$ be the space of $G$-invariant endomorphisms on $V$, that is, the space of linear maps~$A\,: \,V \to V$ that are $G$-invariant: $A (g \cdot v) =  g \cdot (A  v)$ for all $v \in V$ and $g \in G$. The space $\mathrm{End}_G(V)$ forms an algebra with multiplication given by function composition. We recall how to block-diagonalize~$\mathrm{End}_G(V)$. 

If $U \subseteq V$ is a $G$-invariant subspace of a $G$-module $V$, then $U$ is a \emph{submodule} of $V$. A module $V \neq \{0\}$ is irreducible if the only submodules of $V$ are $\{0\}$ and $V$. Two $G$-modules $V$ and $W$ are \emph{$G$-isomorphic} if there exists a bijection $\psi:V \to W$ that respects the group action $g \cdot \psi(v) = \psi(g \cdot v)$ for all $v \in V$, $g \in G$. A $G$-module $V$ can then be decomposed as 
\begin{equation} \label{eq: irreducible decomp}
V = \bigoplus_{i=1}^k \bigoplus_{j=1}^{m_i} V_{i,j}
\end{equation}
where the $V_{i,j}$ are irreducible $G$-modules and $V_{i,j}$ and $V_{i',j'}$ are $G$-isomorphic if and only if $i'=i$. One can show that $\mathrm{End}_G(V)$ is $*$-isomorphic to the matrix algebra $\oplus_{i=1}^k \C^{m_i \times m_i}$ (a $*$-isomorphism between two matrix algebras is an isomorphism that preserves the $*$-operation, i.e., the conjugate transpose). Here we will use a slightly weaker result: there exists a linear bijection $\Phi:\mathrm{End}_G(V) \to \oplus_{i=1}^k \C^{m_i \times m_i}$ such that for all $A \in \mathrm{End}_G(V)$,  $A \succeq 0$ if and only if $\Phi(A) \succeq 0$. (In general the $\Phi$ we construct will not be a $*$-isomorphism.)

\begin{definition}[Representative set] \label{def: representative set}
Let $V$ be a $G$-module with a decomposition into irreducible $G$-modules $V_{i,j}$ as in~\eqref{eq: irreducible decomp}. For each $i \in [k]$, choose vectors $u_{i,j} \in V_{i,j}$ for $j \in [m_i]$ in such a way that for each $j, \ell \in [m_i]$ there exists a $G$-isomorphism from $V_{i,j}$ to $V_{i,\ell}$ that maps $u_{i,j}$ to $u_{i,\ell}$. Let $U_i = (u_{i,1},\ldots,u_{i,m_i})$. Then $\{U_1,\ldots, U_k\}$ forms a \emph{representative set} for the action of $G$ on $V$. 
\end{definition}

The term \emph{symmetry adapted basis} is sometimes used to denote a basis respecting the decomposition of~$V$ into~$G$-modules, cf.~\cite{FS92}. A representative set contains one `representative' basis element of each irreducible module, cf.~\cite{LPS17}. The term \emph{symmetry basis} is also used in the literature to denote the set of consisting of all~$u_{i,j}$ forming a representative set, cf.~\cite{BR21}.

We then have the following (which is well-known general theory; a proof of the theorem in this exact form can be found in~\cite{P19}):
\begin{proposition} \label{prop: symmetry reduction}
Let $G$ be a finite group and let $V$ be a finite-dimensional $G$-module with a decomposition as in~\eqref{eq: irreducible decomp}. Let $\{U_1,\ldots, U_k\}$ be a representative set for the action of $G$ on $V$. Define the map $\Phi:\mathrm{End}_G(V) \to \bigoplus_{i=1}^k \C^{m_i \times m_i}$ as 
\begin{equation*} 
\begin{split}
A &\mapsto \bigoplus_{i=1}^k \big(\langle u_{i,j}, A u_{i,\ell}\rangle\big)_{j,\ell \in [m_i]}.
\end{split}
\end{equation*}
Then $\Phi$ is a bijection and for all $A \in \mathrm{End}_G(V)$,  we have $A \succeq 0 \text{ if and only if } \Phi(A) \succeq 0$.
\end{proposition}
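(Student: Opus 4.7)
The plan is to invoke Schur's lemma twice: first to reduce $\mathrm{End}_G(V)$ to a direct sum of ``coefficient'' matrices indexed by the isotypic decomposition of $V$, and second to identify $\Phi$, up to an invertible diagonal congruence on each block, with this coordinate map. The main subtlety lies in bookkeeping the norms of the representative vectors $u_{i,j}$.

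First, I group the irreducible submodules into isotypic components $W_i = \bigoplus_{j=1}^{m_i} V_{i,j}$. Since components of distinct $W_i$ are pairwise non-isomorphic, Schur's lemma gives $\mathrm{Hom}_G(W_{i'}, W_i) = 0$ for $i \neq i'$; hence every $A \in \mathrm{End}_G(V)$ decomposes as $A = \bigoplus_i A_i$ with $A_i \in \mathrm{End}_G(W_i)$. Fix a unitary $G$-isomorphism $\phi_{i,j} : V_{i,1} \to V_{i,j}$ for each $i, j$ (possible because a $G$-invariant inner product on an irreducible module is unique up to a positive scalar) and transport a fixed orthonormal basis of $V_{i,1}$ to each $V_{i,j}$. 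In the resulting orthonormal basis of $W_i$, Schur's lemma shows that $A_i$ has matrix $M^{(i)}(A) \otimes I_{\dim V_{i,1}}$ for a unique $M^{(i)}(A) \in \C^{m_i \times m_i}$. Consequently $A \mapsto \bigoplus_i M^{(i)}(A)$ is a linear bijection $\mathrm{End}_G(V) \to \bigoplus_i \C^{m_i \times m_i}$, and $A \succeq 0$ if and only if $M^{(i)}(A) \succeq 0$ for every $i \in [k]$.

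Next, I relate $\Phi(A)$ to the matrices $M^{(i)}(A)$. Writing $u_{i,j} = \phi_{i,j}(w_{i,j})$ with $w_{i,j} \in V_{i,1}$, the representative-set condition applied to the pair $(1,j)$ provides a $G$-iso $V_{i,1} \to V_{i,j}$ mapping $u_{i,1}$ to $u_{i,j}$; by Schur's lemma this map is a scalar multiple of $\phi_{i,j}$, which forces $w_{i,j} = \mu_{i,j}\, w_{i,1}$ for some nonzero $\mu_{i,j} \in \C$. Using the $G$-invariant orthogonality of distinct $V_{i,j}$ inside $V$, a short computation then gives
\[
\langle u_{i,j}, A u_{i,\ell}\rangle \;=\; \overline{\mu_{i,j}}\, \mu_{i,\ell}\, \|w_{i,1}\|^{2}\, M^{(i)}(A)_{j,\ell},
\]
so the $i$-th block of $\Phi(A)$ equals $\|w_{i,1}\|^{2}\, D_i^{*} M^{(i)}(A)\, D_i$ with the diagonal matrix $D_i := \mathrm{diag}(\mu_{i,1}, \ldots, \mu_{i,m_i})$ invertible. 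Thus $\Phi$ is obtained from the coordinate map $A \mapsto \bigoplus_i M^{(i)}(A)$ by a fixed invertible congruence on each block, so $\Phi$ is itself a linear bijection, and $\Phi(A) \succeq 0$ if and only if $M^{(i)}(A) \succeq 0$ for all $i$, if and only if $A \succeq 0$. The only nontrivial point is recognizing that the representative-set condition forces the $w_{i,j}$ to be proportional within each isotypic component; once this normalization is understood, the statement is essentially a direct consequence of Schur's lemma.
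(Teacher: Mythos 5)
Your approach via Schur's lemma and isotypic decomposition is the standard one, and most of the bookkeeping is right: the reduction to blocks $A_i \in \mathrm{End}_G(W_i)$, the normalization of the $\phi_{i,j}$ to be unitary, and the observation that the representative-set condition forces $w_{i,j} = \mu_{i,j} w_{i,1}$ are all correct. However, there is a genuine gap in the step where you invoke ``the $G$-invariant orthogonality of distinct $V_{i,j}$ inside $V$''. Orthogonality is automatic only between the isotypic components $W_i$ and $W_{i'}$ for $i \neq i'$; the irreducible summands $V_{i,1},\ldots,V_{i,m_i}$ \emph{within} a single isotypic component need not be pairwise orthogonal, and the decomposition in~\eqref{eq: irreducible decomp} is not assumed to be orthogonal. (In the paper's own construction the copies $\tau \cdot S^\lambda$ of a Specht module inside $M^\mu$ are not orthogonal in general, so this is not a vacuous concern.) The same issue infects the earlier claim that the transported bases give an \emph{orthonormal} basis of $W_i$: they give a basis, in which $A_i$ does have matrix $M^{(i)}(A)\otimes I$, but without orthonormality that matrix representation does not directly give ``$A_i \succeq 0 \iff M^{(i)}(A) \succeq 0$''.

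The statement is still true without orthogonality, and the repair is not long. Since each sesquilinear form $(v,w)\mapsto \langle \psi_{i,j}(v), \psi_{i,\ell}(w)\rangle$ on the irreducible $V_{i,1}$ is $G$-invariant, Schur's lemma forces it to be a scalar multiple of the inner product of $V_{i,1}$; that scalar is $(G_i)_{j,\ell}/\lVert u_{i,1}\rVert^2$ where $G_i := (\langle u_{i,j}, u_{i,\ell}\rangle)_{j,\ell}$ is the (necessarily invertible) Gram matrix of the representative vectors. One then computes that the $i$-th block of $\Phi(A)$ equals $G_i\, M^{(i)}(A)$, and expanding $\langle v, A v\rangle$ for $v \in W_i$ in terms of $G_i$ and $M^{(i)}(A)$ shows that $G_i\, M^{(i)}(A) \succeq 0$ if and only if $A|_{W_i} \succeq 0$. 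Bijectivity of $\Phi$ follows because left multiplication by the invertible $G_i$ is a bijection on each block. This is essentially what your argument produces in the special case $G_i = \lVert w_{i,1}\rVert^2 D_i^* D_i$, i.e., when the $V_{i,j}$ are orthogonal; for the general case you must keep the full Gram matrix rather than a diagonal one.
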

The complex vector spaces~$V$ studied in this paper all have the form~$V=\C^Z$, where~$Z$ is a finite set on which~$G$ acts (hence~$G$ acts on~$V=\C^Z$ via permutation matrices). In this case $\mathrm{End}_G(V)$ is naturally isomorphic to~$(\C^{Z \times Z})^G$, the space of complex~$Z \times Z$-matrices which are invariant under the simultaneous action of~$G$ on their rows and columns. Any representative set $\{U_1,\ldots,U_k\}$ as in Definition~\ref{def: representative set} then consists of $m_i$-tuples~$U_i$ of vectors contained in in~$\C^Z$, so we can view~$U_i$ as a $(Z \times m_i)$-matrix. Moreover, each representative set~$\{U_1,\ldots,U_k\}$ determined in this paper consists of real matrices. In that case  $\Phi$ is even a block-diagonalization ``over $\R$'', i.e., 
$\Phi$ provides a bijection between $(\R^{Z \times Z})^G$ and $\bigoplus_{i=1}^k \R^{m_i \times m_i}$ such that $A \succeq 0$ if and only if $\Phi(A) \succeq 0$. 

Finally, note that if~$\{U^1_1,\ldots,U^1_{k_1}\}$ and~$\{U^2_1,\ldots,U^2_{k_2}\}$ are representative sets for the actions of~$G_1$ and~$G_2$ on~$V_1$ and~$V_2$ respectively, then
\begin{align} \label{eq: prodrep}
\{U^1_i \otimes U^2_j \, \, | \,\, i \in [k_1], j \in [k_2]  \}
\end{align} 
is a representative set for the action of~$G_1 \times G_2$ on~$V_1 \otimes V_2$. 

\section{MUB-algebra} \label{sec: algebra}
One can consider an orthonormal basis of $\C^d$ as being defined by vectors $u_1,\ldots, u_d \in \C^d$ or by the associated rank-1 orthogonal projectors $P_1,\ldots, P_d \in \C^{d \times d}$ (where $P_i = u_i u_i^*$ for all $i$). Here we take the second perspective and consider the algebra generated by the projectors arising from (pairwise) mutually unbiased bases. Let $\{u_1,\ldots, u_d\}$ and $\{v_1,\ldots,v_d\}$ form two orthonormal bases of $\C^d$ and set $P_i = u_i u_i^*$, $Q_i = v_i v_i^*$ for all $i \in [d]$. Then the following holds: 
\begin{enumerate}[label=(\roman*)]
\item $P_i^* = P_i = P_i^2$ for all $i \in [d]$, since 
$(u_i u_i^*)^*= u_i u_i^* = \langle u_i, u_i \rangle u_i u_i^* = (u_i u_i^*)^2$,
\item $P_i P_j = \delta_{i,j} P_i \text{ for all } i,j \in [d]$, since 
$u_i u_i^* u_j u_j^* = \langle u_i, u_j \rangle u_i u_j^* = \delta_{i,j} u_i u_i^*$,
\item $\sum_{i=1}^d P_i = I$, since for any $j \in [d]$ we have $
(\sum_{i=1}^d u_i u_i^*) u_j = \sum_{i \in [d]} \langle u_i, u_j \rangle u_i = u_j$,
\item $P_i Q_j P_i = \frac{1}{d} P_i \text{ for all } i,j \in [d]$, since 
$u_i u_i^* v_j v_j^* u_i u_i^* = |\langle u_i, v_j\rangle|^2 u_i u_i^* = \frac{1}{d} u_i u_i^*$,
\item $[P_i U P_i, P_i V P_i] = P_i U P_i V P_i - P_i V P_i U P_i = 0 \text{ for all } i \in [d], U,V \in \C^{d \times d}$, since 
\[
u_i u_i^* U u_i u_i^* V u_i u_i^* = \langle u_i, U u_i\rangle \langle u_i, V u_i \rangle u_i u_i^* = u_i u_i^* V u_i u_i^* U u_i u_i^*.
\]
\end{enumerate}
The above conditions generalize naturally to $k$ pairwise mutually unbiased bases. Note however that the $P_i$'s and $Q_j$'s are $d \times d$ matrices. In order to use noncommutative polynomial optimization techniques, we need to relax the condition that the operators are of size $d \times d$. We require the following $C^*$-algebraic formulation:

\begin{definition}[MUB-algebra] \label{def:mubalg}
We call a $C^*$-algebra $\mathcal A$ a \emph{$(d,k)$-MUB-algebra} if it contains Hermitian elements $X_{i,j}$ for $i \in [d], j \in [k]$ that satisfy the following conditions:
\begin{enumerate}[label=(\roman*)]
    \item $X_{i,j} X_{i',j} = \delta_{i,i'} X_{i,j}$ for all $i,i' \in [d], j \in [k]$, \label{item:1}
    \item $\sum_{i \in [d]} X_{i,j} = I$ for all $j \in [k]$, \label{item:2}
    \item $X_{i,j} X_{i',j'} X_{i,j} = \frac{1}{d} X_{i,j}$ for all $i,i' \in [d], j,j' \in [k]$ with $j \neq j'$,\label{item:3}
    \item $[X_{i,j} U X_{i,j}, X_{i,j} V X_{i,j}] = 0$ for all $i \in [d],j \in [k]$ and $U,V \in \langle \mathbf X\rangle$, where~$\langle \mathbf X \rangle$ denotes the set of monomials in~$X_{i,j}$.\footnote{The proof of \cref{thrmC*alg} only uses $U,V \in \langle \mathbf X \rangle_3$, i.e., monomials in $X_{i,j}$ of degree~$\leq 3$. \label{footnote degree}} \label{item:4}
\end{enumerate}
\end{definition}
We have shown above that if there exist $k$ mutually unbiased bases in dimension $d$, then there exists a $(d,k)$-MUB-algebra. Navascu\'es, Pironio, and Ac\'in showed the converse in~\cite{NPA12}: if a $(d,k)$-MUB-algebra exists with $I \neq 0$, then there exist $k$ mutually unbiased bases in dimension $d$. 
\begin{restatable}[\cite{NPA12}]{theorem}{thrmCalg} \label{thrmC*alg}
There exists a set of $k$ MUBs in dimension $d$ if and only if there exists a $(d,k)$-MUB-algebra with $I \neq 0$.
\end{restatable}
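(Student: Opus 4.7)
The forward direction is the sequence of elementary verifications already carried out in the itemized list just above Definition~\ref{def:mubalg}: take $\mathcal A = M_d(\mathbb C)$ and $X_{i,j}=u_{i,j}u_{i,j}^{*}$. I focus on the converse. The plan is to first extract an explicit copy of $M_d(\mathbb C)$ sitting inside $\mathcal A$ and then to apply GNS to a carefully chosen pure state, forcing the resulting representation to live on $\mathbb C^d$ with each $\pi(X_{i,j})$ a rank-one projection; the desired MUB vectors then read off from the ranges of these projections.

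The copy of $M_d(\mathbb C)$ inside $\mathcal A$ is spanned by
\[
F_{i,j} := d\cdot X_{i,1}\,X_{1,2}\,X_{j,1} \qquad (i,j\in [d]).
\]
Short computations using conditions~\ref{item:1} and~\ref{item:3} give $F_{i,j}F_{k,l}=\delta_{j,k}F_{i,l}$, $F_{i,j}^{*}=F_{j,i}$, and $F_{i,i}=X_{i,1}$; with~\ref{item:2} this yields $\sum_i F_{i,i}=I$. Using $I\neq 0$ together with~\ref{item:2}--\ref{item:3} one rules out $X_{i,j}=0$ (assuming $k\geq 2$: if $X_{i,j}=0$, then~\ref{item:3} applied symmetrically forces $X_{i',j'}=0$ for every $j'\neq j$, contradicting~\ref{item:2}), so every $F_{i,j}\neq 0$ and the $F_{i,j}$ form a system of matrix units spanning a unital $*$-subalgebra $\mathcal B\subseteq\mathcal A$ isomorphic to $M_d(\mathbb C)$.

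Next I would produce a pure state $\omega$ on $\mathcal A$ with $\omega(X_{1,1})=1$: compress any state with $\omega_0(X_{1,1})>0$ (which exists after relabeling by~\ref{item:2}) to force value $1$ on $X_{1,1}$, and take an extreme point of the weak-$*$ compact convex set of such states via Krein--Milman. The constraint $\omega(X_{1,1})=1$ is preserved under every convex decomposition, so any such extreme point is pure on all of $\mathcal A$. Let $\pi=\pi_\omega:\mathcal A\to B(\mathcal H_\omega)$ be the GNS representation with cyclic unit vector $\xi$, and set $P_{i,j}:=\pi(X_{i,j})$; then $P_{1,1}\xi=\xi$. The main obstacle is to show $\operatorname{rank}(P_{1,1})=1$. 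Condition~\ref{item:4} makes $X_{1,1}\mathcal A X_{1,1}$ a commutative $C^{*}$-algebra, and the restriction $\omega|_{X_{1,1}\mathcal A X_{1,1}}$ is still pure (any convex decomposition of it extends via $Y\mapsto\chi_i(X_{1,1}YX_{1,1})$ to one of $\omega$), hence a character. Multiplicativity of this character, together with the identity $\omega(X_{1,1}ZX_{1,1})=\omega(Z)$ (which follows from $\omega(X_{1,1})=1$ by Cauchy--Schwarz for states), yields
\[
\omega(X_{1,1}Y^{*}X_{1,1}YX_{1,1}) = |\omega(Y)|^{2} \quad\text{for all } Y\in\mathcal A,
\]
i.e.\ $\|P_{1,1}\pi(Y)\xi\|^{2} = |\langle\xi,\pi(Y)\xi\rangle|^{2}$. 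The equality case of Cauchy--Schwarz then forces $P_{1,1}\pi(Y)\xi = \omega(Y)\xi$, and density of $\pi(\mathcal A)\xi$ in $\mathcal H_\omega$ gives $P_{1,1}\mathcal H_\omega=\mathbb C\xi$, so $P_{1,1}$ has rank one.

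To finish, $\pi|_\mathcal B: M_d(\mathbb C)\to B(\mathcal H_\omega)$ is a unital $*$-representation of a simple algebra sending $F_{1,1}$ to a nonzero element, hence is faithful; so $\mathcal H_\omega\cong\mathbb C^d\otimes\mathcal K$ with $\pi(\mathcal B)=M_d\otimes 1_\mathcal K$, and $\operatorname{rank}(P_{1,1})=\operatorname{rank}(\pi(F_{1,1}))=\dim\mathcal K$. Since $\operatorname{rank}(P_{1,1})=1$, this forces $\mathcal H_\omega=\mathbb C^d$. Each $P_{i,j}$ is a projection in $B(\mathbb C^d)$ and is nonzero (for $j\neq 1$ condition~\ref{item:3} gives $\omega(X_{i,j})=1/d$, and for $j=1$ faithfulness of $\pi|_\mathcal B$ gives $P_{i,1}=\pi(F_{i,i})\neq 0$); by~\ref{item:1} and~\ref{item:2}, the ranks of $\{P_{i,j}\}_{i\in[d]}$ are positive integers summing to $d$, so every $P_{i,j}$ has rank exactly one. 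Unit vectors $u_{i,j}$ with $P_{i,j}=u_{i,j}u_{i,j}^{*}$ then form $k$ orthonormal bases of $\mathbb C^d$, and condition~\ref{item:3} yields $|\langle u_{i,j},u_{i',j'}\rangle|^{2}=1/d$ whenever $j\neq j'$, completing the construction.
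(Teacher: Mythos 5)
Your proof is correct, but it takes a genuinely different route from the paper's. The paper works ``inside out'': it packages the relation data into a $d\times d$ matrix $Z_{i,j} := d\big[X_{1,2}X_{a,1}X_{i,j}X_{b,1}X_{1,2}\big]_{a,b\in[d]}$ whose entries all lie in the commutative $C^*$-algebra $X_{1,2}\langle\mathbf X\rangle X_{1,2}$ (commutativity is exactly condition~\ref{item:4}), verifies that the $Z_{i,j}$ still satisfy \ref{item:1}--\ref{item:3}, and then invokes the Gelfand representation to evaluate those entries at a character, producing honest $d\times d$ complex matrices that satisfy \ref{item:1}--\ref{item:3}; the rank-one conclusion then comes from \ref{item:1} and \ref{item:2} exactly as at the end of your argument. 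You go ``outside in'': you build matrix units $F_{i,j}=dX_{i,1}X_{1,2}X_{j,1}$ giving an explicit copy $\mathcal B\cong M_d(\C)\subseteq\mathcal A$, then pass to a GNS representation $\pi_\omega$ of a pure state $\omega$ normalized so $\omega(X_{1,1})=1$, show $\pi_\omega(X_{1,1})$ has rank one by identifying $\omega\vert_{X_{1,1}\mathcal A X_{1,1}}$ with a character (again via \ref{item:4}) and using the equality case of Cauchy--Schwarz, and finally use simplicity of $M_d(\C)$ to conclude $\mathcal H_\omega\cong\C^d$. Both proofs hinge on \ref{item:4} to produce a commutative corner and both end with the same rank-one counting; the paper's version avoids GNS and the theory of pure states entirely (its only abstract ingredient is Gelfand duality applied to a commutative algebra), while yours is closer to the standard operator-algebraic template and makes the matrix-unit structure of $\mathcal A$ explicit, which is arguably the cleaner conceptual picture. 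One small housekeeping remark: as in the paper, you should first replace $\mathcal A$ by the $C^*$-subalgebra generated by the $X_{i,j}$, so that condition~\ref{item:4} (stated for $U,V\in\langle\mathbf X\rangle$) actually forces all of $X_{1,1}\mathcal A X_{1,1}$ to be commutative.
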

We provide a slightly simpler proof (which is inspired by the proof in \cite{NPA12}) of the ``only if''-direction in \cref{appA}. In \cite{NPA12} the authors suggest to use noncommutative polynomial optimization techniques to determine the existence of a $(d,k)$-MUB-algebra. We use \emph{tracial} noncommutative polynomial optimization to do so, that is, we use the following simple corollary. 
\begin{corollary}
There exists a set of $k$ MUBs in dimension $d$ if and only if there exists a $(d,k)$-MUB-algebra equipped with a tracial state $\tau$ for which $\tau(X_{i,j})=1$ for all $i \in [d], j \in [k]$. 
\end{corollary}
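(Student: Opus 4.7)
The plan is to deduce this corollary directly from \cref{thrmC*alg} by rewriting the hypothesis ``$I\neq 0$'' in terms of a tracial state. All the substantive content is already contained in \cref{thrmC*alg}; what remains is a short bookkeeping check about the trace.

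For the ``only if'' direction, I would take the canonical construction described at the start of \cref{sec: algebra}: given $k$ MUBs in $\C^d$, set $\mathcal A = \C^{d\times d}$ and let $X_{i,j}$ be the rank-$1$ orthogonal projector onto the $i$-th vector of the $j$-th basis. Properties \ref{item:1}--\ref{item:4} of \cref{def:mubalg} were verified there, so $\mathcal A$ is a $(d,k)$-MUB-algebra. Equip it with the standard matrix trace $\tau = \Tr$; this is a positive tracial functional, and since each $X_{i,j}$ is a rank-$1$ projector, $\tau(X_{i,j}) = 1$.

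For the ``if'' direction, I would apply $\tau$ to both sides of condition~\ref{item:2}, fixing any $j \in [k]$:
\[
\tau(I) \;=\; \tau\!\Big(\sum_{i \in [d]} X_{i,j}\Big) \;=\; \sum_{i \in [d]} \tau(X_{i,j}) \;=\; d,
\]
which in particular gives $I \neq 0$ in $\mathcal A$. \Cref{thrmC*alg} then produces the desired $k$ MUBs in dimension $d$.

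I do not foresee any real obstacle: the forward direction is just exhibiting the obvious trace on the matrix model, and the reverse direction is the single linear identity above. The only point that requires care is the normalization convention for a ``tracial state'': one must read it as a positive tracial linear functional whose normalization is fixed by $\tau(X_{i,j})=1$ (equivalently $\tau(I)=d$, as forced by condition~\ref{item:2}), rather than the more common normalization $\tau(I)=1$.
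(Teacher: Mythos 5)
Your proof is correct. The paper states this as a ``simple corollary'' of \cref{thrmC*alg} without supplying a proof, and your argument is the natural one: exhibit the matrix-model trace for the forward direction, and for the reverse direction apply $\tau$ to the POVM relation~\ref{item:2} to deduce $\tau(I)=d\neq 0$, hence $I\neq 0$, then invoke \cref{thrmC*alg}. You are also right to flag the normalization issue: read with the standard convention $\tau(I)=1$, the conditions in the corollary would be mutually inconsistent for $d\geq 2$, since applying a state to~\ref{item:2} forces $\sum_{i\in[d]} \tau(X_{i,j})=1$, contradicting $\tau(X_{i,j})=1$ for all $i$. The paper's own subsequent usage (it works with $L(x_{i,j})=1$ and explicitly notes $L_t(1)=d$) confirms that the intended object is a positive tracial linear functional normalized by $\tau(X_{i,j})=1$, exactly as you interpret it.
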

In other words, there exist $k$ MUBs in dimension $d$ if and only if the following problem is feasible 
\begin{align*} 
\inf\{0:&\, \exists\, C^*\text{-alg.~}\A \text{ with tracial state } \tau, \text{ with Herm.~} X_{i,j} \in \A \ (\text{for } i\in[d], j \in [k]) \\
    &\text{ satisfying \ref{item:1}--\ref{item:4} and }\tau(X_{i,j})=1 \},
\end{align*}
where for any set~$\mathcal{F}$, feasibility of the optimization problem $\text{inf}\{0 : \exists\, a \in \mathcal{F} \}$ means that $\mathcal{F}$ is nonempty. 
We now rephrase the above feasibility problem as a tracial optimization problem in the variables~$x_{i,j}$ where $i \in [d], j \in [k]$. Let $\Imub$ be the ideal generated by the relations \ref{item:1} to \ref{item:4} of \cref{def:mubalg}. Then the above feasibility problem can be equivalently formulated as 
\begin{align*}
\inf\{0:&\, \exists\, L \in \R\ncx^* \text{ s.t. } L \text{ is tracial, positive, } \\
&L=0 \text{ on } \Imub, \text{ and } L(x_{i,j})=1 \text{ for all } i \in [d], j \in [k]\}.
\end{align*}
(To see the equivalence, a tracial state $\tau$ defines a feasible solution $L$ by restricting to the algebra generated by the $x_{i,j}$. Conversely, applying the GNS construction to a feasible solution $L$ yields a $C^*$-algebra $\mathcal A$ with the desired properties.)
\begin{theorem} \label{thrm:linearfunc}
There exist $k$ MUBs in dimension $d$ if and only if there exists a positive tracial linear functional $L \in \R\ncx^*$ that satisfies $L=0$ on $\Imub$ and $L(x_{i,j}) = 1$ for all $i \in [d], j \in [k]$. 
\end{theorem}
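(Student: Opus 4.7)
The plan is to formalize the parenthetical comment immediately preceding the theorem statement, using the Corollary as the bridge to the MUB-algebra description.

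Forward direction. Given $k$ MUBs in dimension $d$, the Corollary supplies a $(d,k)$-MUB-algebra $\mathcal A$ equipped with a tracial state $\tau$ satisfying $\tau(X_{i,j})=1$. Define $L \in \R\ncx^*$ by $L(p):=\tau(p(\mathbf X))$. Positivity, traciality, vanishing on $\Imub$, and the normalization $L(x_{i,j})=1$ are all inherited directly from the analogous properties of $\tau$ together with the fact that the $X_{i,j}$ satisfy Conditions \ref{item:1}--\ref{item:4}.

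Backward direction. Given such an $L$, I would apply the GNS construction. Set $N:=\{p \in \R\ncx : L(p^*p)=0\}$. The tracial Cauchy--Schwarz inequality implies that $N$ is a two-sided ideal of $\R\ncx$, so the quotient $\R\ncx/N$ carries the positive-definite inner product $\langle [p],[q]\rangle := L(p^*q)$. Let $H$ be the Hilbert space completion, and define $\pi(q)[p]:=[qp]$ for $q \in \R\ncx$.

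The main technical obstacle is showing that each $\pi(q)$ extends to a bounded operator on $H$. The key observation is that $\Imub$ is $*$-closed and contained in $N$ (since $L$ vanishes on it), and the relation $x_{i,j}^2 - x_{i,j} \in \Imub$ from Condition \ref{item:1} forces $\pi(x_{i,j})$ to descend to a self-adjoint idempotent on $\R\ncx/N$, hence an operator of norm at most $1$. Boundedness of $\pi(p)$ for arbitrary polynomials then follows by multiplicativity, and the identity $\pi(q^*)=\pi(q)^*$ follows from a short computation using traciality of $L$. Let $\mathcal A$ be the $C^*$-subalgebra of $B(H)$ generated by the operators $X_{i,j}:=\pi(x_{i,j})$, and define $\tau(A):=\langle [1], A[1]\rangle$. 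The inclusion $\Imub \subseteq N$ guarantees that the $X_{i,j}$ satisfy Conditions \ref{item:1}--\ref{item:4} of \cref{def:mubalg}; traciality of $L$ yields traciality of $\tau$ on the dense polynomial subalgebra generated by the $X_{i,j}$, which then extends to $\mathcal A$ by continuity; and $\tau(X_{i,j}) = L(x_{i,j}) = 1$, with $\tau(I) = L(1) = d \neq 0$ in particular. The Corollary now applies and produces the desired $k$ MUBs in dimension $d$, completing the proof.
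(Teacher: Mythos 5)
Your proof is correct and takes essentially the same route as the paper, whose proof of \cref{thrm:linearfunc} is just the one-sentence sketch in the parenthetical: restrict $\tau$ for one direction, GNS for the other. Your write-up supplies exactly the details that sketch omits (two-sidedness of the null ideal $N$ via tracial Cauchy--Schwarz, boundedness via the self-adjoint-idempotent observation coming from $x_{i,j}^2-x_{i,j}\in\Imub\subseteq N$, and the check that the relations pass to $\mathcal A$); the only small inaccuracy is that the identity $\pi(q^*)=\pi(q)^*$ follows directly from $\langle [qp_1],[p_2]\rangle=L(p_1^*q^*p_2)=\langle[p_1],[q^*p_2]\rangle$ and does not actually use traciality of $L$.
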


\subsection{Semidefinite programming relaxation} \label{sec: SDP}

\cref{thrm:linearfunc} gives rise to an infinite-dimensional semidefinite program since positivity of a linear functional $L \in \R\ncx^*$ can be expressed in terms of positive semidefiniteness of its moment matrix:
\begin{equation} \label{eq: positivity vs moment matrix}
L(p^*p) \geq 0 \text{ for all } p \in \R\ncx \quad \Longleftrightarrow \quad M(L) := (L(u^*v))_{u,v \in \ncx} \succeq 0.
\end{equation}
We then obtain (finite-dimensional) relaxations by considering linear functionals acting on polynomials of bounded degree. We define the following semidefinite programs for $t \in \N$:
\begin{align*}
\sdp(d,k,t) =     \inf\{0:&\, \exists\, L \in \R\ncx_{2t}^* \text{ s.t. } L \text{ is tracial},\notag  \\ 
&L=0 \text{ on } {\Imub}_{,2t},\notag  \\  
&L(p^*p) \geq 0 \text{ for all } p \in \R\ncx_t, \notag\\ 
&L(x_{i,j})=1 \text{ for all } i \in [d], j \in [k]\}. 
\end{align*}
Also, we define $\sdp(d,k,t+\frac{1}{2})$ by restricting in the definition of $\sdp(d,k,t+1)$ to polynomials of degree $2t+1$ and for the positivity condition to polynomials of the form $p = x_{i,j} \tilde p$ where~$\tilde p \in \R \langle x \rangle_t$ (where we view $\tilde p^* x_{i,j}^2 \tilde p = \tilde p^* x_{i,j} \tilde p$ as a polynomial of degree $2t+1$). That is, for $t \in \N$, we define 
\begin{align*}
\sdp(d,k,t+\tfrac 12 ) =     \inf\{0:&\, \exists\, L \in \R\ncx_{2t+1}^* \text{ s.t. } L \text{ is tracial},\notag  \\ 
&L=0 \text{ on } {\Imub}_{,2t+1},\notag  \\  
&L(p^*x_{i,j} p) \geq 0 \text{ for all } p \in \R\ncx_t, i\in [d], j \in [k], \notag\\ 
&L(x_{i,j})=1 \text{ for all } i \in [d], j \in [k]\}. 
\end{align*}
These relaxations become tighter as $t$ grows. Indeed, if $L \in \R\ncx_{2t+2}^*$ is feasible for $\sdp(d,k,t+1)$, then the restriction of $L$ to degree $2t+1$ polynomials is feasible for $\sdp(d,k,t+\frac12)$  (using $x_{i,j}^2=x_{i,j}$). Similarly, if $L \in \R\ncx_{2t+1}^*$ is feasible for $\sdp(d,k,t+\frac12)$, then for any polynomial $p \in \R\ncx_{t}$ and $j \in [k]$ we have $L(p^*p) = \sum_{i \in [d]} L(p^* x_{i,j} p) \geq 0$, and therefore the restriction of $L$ to degree $2t$ polynomials is feasible for $\sdp(d,k,t)$.  
We also have the following: 
\begin{lemma}
Let $t \in \N$. 
Then if $\sdp(d,k,t)$ is infeasible, there do not exist $k$ MUBs in dimension~$d$. 
\end{lemma}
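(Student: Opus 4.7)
The plan is to prove the contrapositive: assuming $k$ MUBs exist in dimension $d$, I will exhibit an explicit feasible solution $L$ for $\sdp(d,k,t)$. The construction simply evaluates noncommutative polynomials at the actual MUB projectors and takes a trace, so essentially all the work has already been done by \cref{thrmC*alg} and its corollary.

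First, assume there exist $k$ MUBs in dimension $d$. By the corollary to \cref{thrmC*alg}, there exists a $(d,k)$-MUB-algebra $\A$ equipped with Hermitian elements $X_{i,j} \in \A$ ($i \in [d]$, $j \in [k]$) satisfying conditions \ref{item:1}--\ref{item:4} of \cref{def:mubalg}, together with a tracial state $\tau$ on $\A$ for which $\tau(X_{i,j})=1$ for every $i,j$. Next, I define the linear functional
\[
L \colon \R\ncx_{2t} \to \R, \qquad L(p) := \tau\big(p(\mathbf{X})\big),
\]
obtained by substituting $x_{i,j} \mapsto X_{i,j}$ and then applying $\tau$. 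The task is to verify that this $L$ lies in the feasible region of $\sdp(d,k,t)$.

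The verification proceeds condition by condition. Traciality of $L$ is inherited directly from traciality of $\tau$, since substitution is a $*$-homomorphism from $\R\ncx$ into $\A$. For the ideal condition $L = 0$ on $\Imub{}_{,2t}$, note that every generator of $\Imub$ corresponds to one of the defining relations \ref{item:1}--\ref{item:4}, all of which are satisfied by the $X_{i,j}$; hence any polynomial $p \in \Imub{}_{,2t}$ maps under substitution to $0 \in \A$, and so $L(p) = \tau(0) = 0$. For the positivity condition, if $p \in \R\ncx_t$ then $L(p^*p) = \tau\big(p(\mathbf{X})^* p(\mathbf{X})\big) \geq 0$ since $\tau$ is a state (hence positive). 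Finally, $L(x_{i,j}) = \tau(X_{i,j}) = 1$ by the choice of $\tau$.

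There is no real obstacle here: the only subtlety is that the relations \ref{item:1}--\ref{item:4} that appear in the generators of $\Imub$ are polynomial identities which might a priori produce elements of degree larger than $2t$ when multiplied by arbitrary polynomials, but since we evaluate in $\A$ before applying $\tau$ and the relations hold as \emph{identities} in $\A$, this causes no issue. The conclusion follows: whenever $k$ MUBs exist in dimension $d$, the program $\sdp(d,k,t)$ admits the feasible solution $L$ constructed above, and contrapositively infeasibility of $\sdp(d,k,t)$ rules out the existence of $k$ MUBs in dimension $d$.
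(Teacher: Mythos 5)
Your proof is correct. The paper states this lemma without proof, treating it as immediate; your argument is exactly the standard one that is implicit. One small remark: you could shorten the route by invoking \cref{thrm:linearfunc} directly rather than going through the $(d,k)$-MUB-algebra and a tracial state — if $k$ MUBs exist, \cref{thrm:linearfunc} already hands you a positive tracial linear functional $L \in \R\ncx^*$ with $L = 0$ on $\Imub$ and $L(x_{i,j}) = 1$, and restricting $L$ to $\R\ncx_{2t}$ immediately gives a feasible point of $\sdp(d,k,t)$. Also, the ``subtlety'' you flag in the final paragraph is not really present: the truncated ideal $\Imub{}_{,2t}$ is by definition restricted to polynomials of degree at most $2t$, so there is no danger of the relations producing higher-degree elements in the first place; nevertheless your reasoning (everything vanishes in $\A$ after substitution) is still the right way to see that $L$ annihilates $\Imub{}_{,2t}$.
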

Its converse holds in the following sense (its proof is standard, we include it for completeness):
\begin{lemma}
If there do not exist $k$ MUBs in dimension $d$, then there exists a $t_0 \in \N$ such that $\sdp(d,k,t)$ is infeasible for all $t \geq t_0$.
\end{lemma}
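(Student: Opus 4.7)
The plan is to argue by contradiction: suppose $\sdp(d,k,t)$ is feasible for every $t \in \N$ and extract from a sequence $(L_t)_t$ of feasible solutions a limit functional $L_\infty \in \R\ncx^*$ that is tracial, positive, vanishes on $\Imub$, and satisfies $L_\infty(x_{i,j}) = 1$ for all $i,j$. By \cref{thrm:linearfunc} such an $L_\infty$ witnesses the existence of $k$ MUBs in dimension $d$, contradicting the hypothesis.

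The engine is a uniform Archimedean-type bound on the moments. First I would verify that $k - \sum_{i,j} x_{i,j}^2$ already lies in $\Imub_{,2}$: using \ref{item:2} gives $1 - \sum_i x_{i,j} \in \Imub$ for each $j$, and using \ref{item:1} gives $x_{i,j} - x_{i,j}^2 \in \Imub$ for each $i,j$. Hence for any $R^2 \geq k$ the polynomial $R^2 - \sum_{i,j} x_{i,j}^2 = (R^2 - k) + (k - \sum_{i,j} x_{i,j}^2)$ is a nonnegative constant plus an element of $\Imub_{,2}$. Sandwiching this degree-$2$ certificate between a polynomial $p$ of degree $\leq t-1$ and its adjoint, and applying $L_t$ (which is nonnegative on sums of Hermitian squares of degree $\leq 2t$ and vanishes on $\Imub_{,2t}$), yields $\sum_{i,j} L_t((x_{i,j} p)^* (x_{i,j} p)) \leq R^2 L_t(p^* p)$, and in particular each summand is bounded by $R^2 L_t(p^* p)$. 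Iterating along a monomial $w = x_{i_1,j_1}\cdots x_{i_n,j_n}$ of degree $n \leq t$ and using $L_t(1) = \sum_i L_t(x_{i,j}) = d$ gives $L_t(w^* w) \leq d\, R^{2|w|}$; a final application of Cauchy--Schwarz (writing any monomial of degree $\leq 2t$ as $u^* v$) then yields the uniform bound $|L_t(w)| \leq d\, R^{|w|}$, independent of $t \geq \lceil |w|/2 \rceil$.

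Because $\ncx$ is countable, a standard diagonal extraction produces a subsequence along which $L_t(w)$ converges for every monomial $w$; define $L_\infty$ on monomials as the pointwise limit and extend by linearity. The tracial property, the vanishing on $\Imub$, and the normalizations $L_\infty(x_{i,j}) = 1$ each amount to linear identities on finitely many monomials of bounded degree, and so are inherited from $L_t$ along the subsequence. Positivity is similar: for any $p \in \R\ncx$, the inequality $L_t(p^* p) \geq 0$ holds as soon as $t \geq \deg p$, so $L_\infty(p^* p) \geq 0$. Applying \cref{thrm:linearfunc} to $L_\infty$ now contradicts the non-existence of $k$ MUBs, proving the lemma with $t_0$ equal to any $t$ for which $\sdp(d,k,t)$ turns out to be infeasible (which exists by the contradiction). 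The main obstacle I expect is the degree-bookkeeping in the Archimedean step: one must ensure that after conjugation by a polynomial of degree $\leq t-1$ the degree-$2$ certificate still sits inside the right truncation $\MM_{,2t} + \Imub_{,2t}$, so that the inductive bound can legally be iterated within the finite degree budget allowed by $\sdp(d,k,t)$.
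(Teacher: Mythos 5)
Your argument is correct and is essentially the paper's proof: argue by contradiction, obtain a uniform growth bound on the moments from an Archimedean certificate in $\Imub$, pass to a pointwise limit along a subsequence, and apply \cref{thrm:linearfunc} to the limit functional. The only cosmetic differences are that you spell out the Archimedean computation (exhibiting $k-\sum_{i,j}x_{i,j}^2\in\Imub_{,2}$ and iterating, where the paper simply cites Archimedeanity) and that you extract the limit by a diagonal argument over the countable monomial basis where the paper invokes Banach--Alaoglu; in this setting these are the same device.
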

\begin{proof}
Indeed, towards a contradiction, suppose $\sdp(d,k,t)$ is feasible for all $t \in \N$. For each $t \in \N$, let $L_t \in \R\ncx_{2t}^*$ be a feasible solution for $\sdp(d,k,t)$. We can extend $L_t$ to a linear functional $\widetilde{L_t}$ on $\R\ncx$ by defining $\widetilde{L_t}(w) = 0$ for all monomials $w \in \ncx$ with $|w| >2t$. The Archimedeanity of $\Imub$ and the fact that $L_t(1)=d$ for all $t$ together imply that $|\widetilde{L_t}(w)| \leq d R^{|w|}$ for all monomials $w \in \ncx$ and all $t \in \N$, where $R \in \R$ is such that $R^2 - \sum_{i,j} x_{i,j}^2 \in \mathcal M(\emptyset) + \Imub$. The Banach-Alaoglu theorem then ensures the existence of a subsequence of the sequence $\{\widetilde{L_t}\}_{t \in \N}$ that converges pointwise to $L \in \R\ncx^*$. This $L$ is then a positive tracial linear functional and it is zero on $\Imub$. \cref{thrm:linearfunc} then shows that there exist $k$ MUBs in dimension $d$. 
\end{proof}

In $\sdp(d,k,t)$ it suffices to enforce $L(p^*p) \geq 0$ for all \emph{homogeneous} polynomials of degree~$t$. Indeed, for $p = \sum_{w \in \ncx_t} p_w w \in \R\ncx_t$, define its homogenization $p_{\mathrm{hom}} = \sum_{w \in \ncx_t} p_w (\sum_{i=1}^d x_{i,1})^{t-|w|} w$. One then has $L(p^*p) = L(p_{\mathrm{hom}}^*p_{\mathrm{hom}})$ since ${\Imub}_{,2t}$ contains generators of the form $1-\sum_{i=1}^d x_{i,j}$ (and $L$ is tracial).  The analogous statement holds for $\sdp(d,k,t+\frac12)$.
\begin{lemma}\label{lemma:degreelemma}
Let $t \in \N$ and let $L \in \R\ncx_{2t}^*$ be a tracial linear functional such that $L=0$ on the truncated ideal generated by the polynomial $1-\sum_{i=1}^d x_{i,1}$. Then, if $L(q^*q)\geq 0$ for all homogeneous polynomials $q$ of degree $t$, then $L(p^*p)\geq 0$ for all polynomials $p$ of degree at most $t$. 
\end{lemma}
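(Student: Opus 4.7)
The plan is to follow the hint given just before the statement and homogenize with respect to $s := \sum_{i=1}^d x_{i,1}$, which is Hermitian. Given $p = \sum_{w \in \ncx_t} p_w w$, I would define the homogenization $p_{\hom} := \sum_w p_w s^{t-|w|} w$, noting that each summand $s^{t-|w|} w$ has degree exactly $t$, so $p_{\hom}$ is homogeneous of degree $t$. By hypothesis, $L(p_{\hom}^* p_{\hom}) \geq 0$, so it suffices to show
\[
L(p^* p) = L(p_{\hom}^* p_{\hom}).
\]

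The second step is the computation of the difference. Since $s$ is Hermitian, $p_{\hom}^* = \sum_w p_w \, w^* s^{t-|w|}$, hence
\[
p_{\hom}^* p_{\hom} = \sum_{w,v} p_w p_v \, w^* s^{2t - |w| - |v|} v.
\]
Setting $m_{w,v} := 2t - |w| - |v| \geq 0$, the difference $p_{\hom}^* p_{\hom} - p^* p$ becomes $\sum_{w,v} p_w p_v \, w^*(s^{m_{w,v}} - 1) v$. Using the telescoping factorization $s^m - 1 = (1-s)\cdot(-\sum_{j=0}^{m-1} s^j)$ (valid in any ring since $s$ commutes with itself), each term rewrites as a sum of elements of the form $w^*(1-s)s^j v$ with $0 \leq j \leq m_{w,v}-1$.

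The third step is to verify that $L$ annihilates each such element. The total degree of $w^*(1-s)s^j v$ is $|w| + 1 + j + |v| \leq 2t$ when $j \leq m_{w,v}-1$, so it lies in $\R\ncx_{2t}$. Using traciality twice,
\[
L\bigl(w^*(1-s)s^j v\bigr) = L\bigl((s^j v w^*)(1-s)\bigr),
\]
and $(s^j v w^*)(1-s)$ is of the form $q \cdot h$ with $h = 1-s \in T$ and $\deg(qh) \leq 2t$, hence lies in the truncated ideal $\MI(T)_{2t}$ and is annihilated by $L$ by assumption. Summing over all $w,v$ gives $L(p^*p) = L(p_{\hom}^* p_{\hom}) \geq 0$. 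There is no real obstacle here beyond careful degree bookkeeping; the only noncommutative subtlety is that the ideal is a one-sided object in the paper's convention, so one must invoke traciality to cycle $(1-s)$ to the right before applying the ideal-vanishing hypothesis.
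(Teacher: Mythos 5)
Your proof is correct and follows essentially the same route the paper sketches in the paragraph immediately preceding the lemma: homogenize via $p \mapsto p_{\hom}$ using $s=\sum_i x_{i,1}$, and show $L(p^*p)=L(p_{\hom}^*p_{\hom})$ by writing the difference in terms of $(1-s)$ and invoking traciality to cycle $(1-s)$ to the right so that the resulting elements lie in the truncated (left) ideal. The telescoping factorization $s^m - 1 = -(1-s)\sum_{j=0}^{m-1}s^j$ and the degree bookkeeping are exactly the details the paper leaves implicit, and you have them right.
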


\paragraph{Certificates of infeasibility.}

For $d,k,t \in \N$, the dual of $\sdp(d,k,t)$ is
\begin{align*}
    \sdp^*(d,k,t) := \sup \Big\{\sum_{i \in [d],j \in [k]} \lambda_{i,j} :-\sum_{i,j} \lambda_{i,j} x_{i,j} \in \mathcal M(1)_{2t} + \mathcal I_{\mathrm{MUB},2t} \Big\}.
\end{align*}
The program $\sdp^*(d,k,t)$ is strictly feasible. Indeed, the relations $x_{i,j} = x_{i,j}^2$ show that the point $(\lambda_{i,j})_{i \in [d], j \in [k]}$ given by $\lambda_{i,j} = -1+\delta_{i,j}$ is feasible whenever $|\delta_{i,j}|\leq 1$ for all $i \in[d],j\in[k]$. Hence, if $\sdp(d,k,t)$ is infeasible, there exists a feasible solution $(\lambda_{i,j})$ for $\sdp^*(d,k,t)$ with $\sum_{i,j}  \lambda_{i,j} >0$. For the half-integer levels $\sdp(d,k,t+\frac12)$ the dual is formed analogously with $\mathcal M(1)_{2t}$ replaced by $\mathcal M(\{x_{i,j}: i \in [d],j \in [k]\})_{2t+1}$ and ${\Imub}_{,2t}$ replaced by ${\Imub}_{,2t+1}$ (the same point is strictly feasible). 

\subsection{Group-invariance of the MUB problem} \label{sec: groupinvariance}

As mentioned before, the MUB property \eqref{eq: MUB property} of a set of orthonormal bases is preserved under permuting the bases, and under permuting the basis vectors within a single basis. Together these two permutation actions give rise to an action on sets of orthonormal bases of the \emph{wreath product} of $S_d$ and $S_k$. Here we recall the definition of this group and its action on noncommutative polynomials in variables $x_{i,j}$ where $i \in [d]$, $j \in [k]$.

Let $d, k \in \N$. Then the wreath product of $S_d$ and $S_k$ is a group denoted as $S_d \wr S_k$. The group $S_d \wr S_k$ is a finite group with $k!(d!)^k$ elements. Each element is of the form $(\bm{\alpha},\beta) = ((\alpha_1,\ldots,\alpha_k), \beta)$ where $\alpha_i \in S_d$ for each $i \in [k]$ and $\beta \in S_k$. The product of $(\bm{\alpha},\beta)$ and $(\bm{\eta},\zeta)$ is defined as 
\[
(\bm{\alpha},\beta)  (\bm{\eta},\zeta) = (\bm{\alpha}(\beta *\bm{\eta}), \beta \zeta)
\]
where $\beta*\bm{\eta} = (\eta_{\beta^{-1}(1)},\ldots, \eta_{\beta^{-1}(k)})$.\footnote{Equivalently, the wreath product~$S_d \wr S_k$ is the semidirect product~$(S_d)^k \rtimes_{\phi} S_k$ where the homomorphism~$\phi : S_k \to \text{Aut}(S_d)^k)$ comes from the natural action of~$S_k$ on~$(S_d)^k$, i.e., we have~$\phi(\beta)(\bm{\eta}) := \tau*\bm{\eta}  = (\eta_{\tau^{-1}(1)},\ldots, \eta_{\tau^{-1}(k)})$, for~$\beta \in S_k$ and $\bm{\eta} \in (S_d)^k$.}
We define the following action of $S_d \wr S_k$ on noncommutative variables $x_{i,j}$ indexed by pairs $(i,j) \in [d] \times [k]$: for $(\bm{\alpha},\beta) \in S_d \wr S_k$ we set 
\[
(\bm{\alpha},\beta)  x_{i,j} = x_{\alpha_{\beta(j)}(i),\beta(j)}.
\]
This induces an action of $S_d \wr S_k$ on noncommutative polynomials in variables $x_{i,j}$ for $i \in [d]$, $j \in [k]$. 

\paragraph{Group-invariance of the SDP relaxations.}

The above action on monomials makes $\sdp(d,k,t)$ group-invariant for the group~$G=S_d \wr S_k$ and $\sdp(d,k,t+\frac12)$ group-invariant for a suitable subgroup~$G$ of $S_d \wr S_k$ that we describe below. 
Following the discussion in \cref{sec: group invariant pop}, this allows us to restrict to~$G$-invariant~$L$ in the definition of~$\sdp(d,k,t)$ and~$\sdp(d,k,t+\frac12)$. 

For level~$t$, the rows and columns of the moment matrix are indexed by the monomials in~$\R \langle x_{i,j} \, |\, i \in [d], j \in [k]  \rangle_{=t}$. We identify this index set with $Z:=([d]\times [k])^t$. The moment matrix is invariant under the simultaneous action of the wreath product~$G:= S_d \wr S_k$ on its rows and columns. By \cref{prop: symmetry reduction}, to block diagonalize the moment matrix it suffices to find a representative set for the action of~$G$ on~$\C^Z$. We do so in \cref{sec:reprset wreath}.

For level~$t+\frac12$, we have~$dk$ localizing moment matrices (one for each~$x_{i,j}$). By symmetry these moment matrices are all positive semidefinite if and only if the moment matrix expressing~$L(p^*x_{1,1}p) \geq 0$ for each~$p \in \R \langle x_{i,j} \, |\, i \in [d], j \in [k]  \rangle_{=t}$ is positive semidefinite. This moment matrix is indexed by the monomials in~$x_{1,1} \R \langle x_{i,j} \, |\, i \in [d], j \in [k]  \rangle_{=t}$. It can be seen that this set is a module for~$G:=S_{d-1} \times (S_d \wr S_{k-1})$, where~$S_{d-1}$ permutes the indices $2,\ldots,d$ corresponding to~$k=1$ (i.e., the basis elements in the first basis), and where~$S_d \wr S_{k-1}$ acts on the variables~$x_{i,j}$ with $ i \in [d], \, j \in \{2,\ldots, k\}$. 
We show in Section~\ref{sec: representative set thalf} that we can obtain a representative set for this case from~\cref{eq: prodrep} in combination with already derived  representative sets for~$S_{d-1}$-modules (cf.~Sec.~\ref{sec:reprset sk}) and $S_d \wr S_{k-1}$-modules (cf. Sec.~\ref{sec:reprset wreath}). 

\section{Representative set for the action of~\texorpdfstring{$S_k$}{Sk} on \texorpdfstring{$[k]^t$}{[k]t}} \label{sec:reprset sk}

Here we give a representative set for the action of $S_k$ on homogeneous degree-$t$ polynomials in noncommutative variables $x_1,\ldots, x_k$. This serves as a first step towards the decomposition of the $S_d \wr S_k$-invariant SDPs mentioned above in two ways. First, one can obtain $S_k$-invariant relaxations of $\sdp(d,k,t)$ by only considering polynomials in the variables $x_{1,j}$ for $j\in [k]$. The representative set we describe here allows one to symmetry reduce such SDPs. These relaxations are sometimes sufficient to detect non-existence of MUBs, see \cref{sec: numerics}. Second, the construction of a representative set for the $S_d \wr S_k$-action on $([d] \times [k])^t$ builds on the representation theory of the symmetric group. The representative set we construct here applies more broadly to polynomial optimization problems in noncommutative variables $x_1,\ldots,x_k$ that are $S_k$-invariant.

We first provide the necessary preliminaries about the representation theory of the symmetric group. For a more detailed overview we refer the reader to, e.g., Sagan~\cite{Sagan01} (whose notation we follow). In \cref{reprsetSk} we then construct a representative set for the action of $S_k$ on $[k]^t$. In \cref{sec: example 32} we conclude this preliminary section with an example that illustrates the representation theory in the language of polynomials: we show how to apply the theory to the case of homogeneous quadratic polynomials in three variables ($k=3$, $t=2$).

\subsection{Preliminaries on the representation theory of \texorpdfstring{$S_k$}{Sk}} \label{sec: prelim Sk}

\paragraph{Notation.}
Let $k \in \N$.  A \emph{composition $\bm{k}$ of $k$} is a sequence $\bm{k} = (k_1,k_2,\ldots, k_{h})$ of nonnegative integers for which $\sum_{i=1}^{h} k_i = k$. A \emph{partition $\lambda$ of $k$}, denoted $\lambda \vdash k$, is a nonincreasing sequence $\lambda = (\lambda_1,\lambda_2,\ldots, \lambda_{h})$ of positive integers for which $\sum_{i=1}^{h} \lambda_i = k$.  We call $h$ the \emph{height} of $\lambda$, denoted $\mathrm{height}(\lambda)$. The \emph{shape} or \emph{Ferrers diagram} of $\lambda$ is an array containing $k$ cells divided in $\mathrm{height}(\lambda)$ rows such that the $i$th row has $\lambda_i$ cells. As an example, the shape of $(3,2) \vdash 5$ is 
\[
\ytableausetup{centertableaux,boxsize=1.25em}
\begin{ytableau}
\ & \ & \ \\
\ & \
\end{ytableau}.
\] 
A \emph{generalized Young tableau of shape $\lambda$} is an array $\tau$ of shape $\lambda \vdash k$ with the cells filled with positive integers. For brevity, we simply call $\tau$ a tableau. The integers appearing in $\tau$ are called its \emph{content}. The content defines a composition $\mu = (n_1,n_2,\ldots,n_\ell)$ of $k$ where $n_i$ is the number of repetitions of $i$ in $\tau$ and $\ell$ is the largest integer appearing in $\tau$. By concatenating its rows, the tableau $\tau$ defines a $k$-dimensional vector, $\mathrm{vec}(\tau) \in [\ell]^k$, and therefore we may view $\tau$ as a function from $[k]$ to $[\ell]$ (when its shape is clear). We can (equivalently) express the numbers $n_i$ in terms of the function $\tau:[k] \to [\ell]$ as $|\tau^{-1}(i)| = n_i$ for all $i \in [\ell]$. Continuing the above example, we can fill the shape $(3,2) \vdash 5$ with the integers $1,2,3,4,5$ to obtain a (generalized Young) tableau $\tau$ that we can express as an array, a row-vector, or a function 
\[
\tau= \ytableausetup{centertableaux}
\begin{ytableau}
1 & 4 & 2 \\
3 & 5
\end{ytableau}, \qquad \mathrm{vec}(\tau) = (1,4,2,3,5) \in [5]^5, \qquad \begin{tabular}{l|ccccc}
$i$ & $1$ & $2$ & $3$ & $4$ & $5$ \\ 
$\tau(i)$ & $1$ & $4$ & $2$ & $3$ & $5$
\end{tabular}, 
\]
its content defines the composition $(1,1,1,1,1)$, which we abbreviate as $(1^5)$.
Two generalized Young tableaux $\tau,\tau'$ of the same shape are row-equivalent, denoted~$\tau \sim \tau'$, if tableau $\tau'$ can be obtained from $\tau$ by permuting its rows. Given a generalized Young tableau $\tau$, we define the corresponding \emph{tabloid} $\{\tau\}$ as $\{\tau\} = \{\tau' : \tau' \sim \tau\}$.
The tabloid corresponding to the above example is 
\begin{equation} \label{eq:example tabloid}
\{\tau\} = \ \ytableausetup{centertableaux,tabloids}
\begin{ytableau}
1 & 2 & 4 \\
3 & 5
\end{ytableau}. 
\end{equation}
For tabloids we use only horizontal lines to indicate that the order within a row does not matter. By convention we display the elements in a row of a tabloid in non-decreasing order. 
Given a tableau $\tau$ of shape $\lambda \vdash k$ with content $(1^k)$, we let $C_\tau$ be its \emph{column stabilizer}, i.e., the subgroup of $S_k$ that leaves the content of columns invariant. The column stabilizer of our example $\tau$ is $C_\tau = S_{\{1,3\}} \times S_{\{4,5\}} \times S_{\{2\}}$.

A generalized Young tableau is \emph{semistandard} if its rows are (weakly) increasing and its columns are strictly increasing. We let $\widetilde T_{\lambda\mu}$ be the set of tableaux of shape $\lambda$ and content $\mu$ and we let $T_{\lambda\mu}$ be its subset of semistandard tableaux.  
We often use the content 
\[
\mu_r := (k-r,\underbrace{1,\ldots,1}_{r \text{ times}}),
\]
where the integer $k$ is clear from context. Continuing our example, when $\mu = (3,2)$ the set of semistandard Young tableaux is empty when the height of $\lambda$ is at least $3$ (since we only have two distinct elements with which to fill the first column). A complete list of semistandard Young tableaux for $\mu = (3,2)$ thus corresponds to the three partitions $\lambda = (5)$, $(4,1)$, and $(3,2)$:
\begin{align} \label{eq: example semistandard}
\lambda = (5): \ \  \ytableausetup{centertableaux, tabloids=off}  \begin{ytableau}
1 & 1 & 1 & 2 & 2\\
\end{ytableau}, \quad 
\lambda = (4,1): \ \   \ytableausetup{centertableaux}  \begin{ytableau}
1 & 1 & 1 & 2\\
2 \\
\end{ytableau}, \quad 
\lambda = (3,2):  \ \   
\ytableausetup{centertableaux}  \begin{ytableau}
1 & 1 & 1 \\
2 & 2 \\
\end{ytableau}.
\end{align}

\paragraph{Permutation module and Specht modules.}
Two important $S_k$-modules are permutation modules and Specht modules. Let $\lambda \vdash k$ and consider tableaux with content $(1^k) = \{1,2,\ldots,k\}$. The \emph{permutation module} $M^\lambda$ consists of the vector space of tabloids of shape $\lambda$ with content $(1^k)$. The action of $S_k$ on $M^\lambda$ is defined through its action on tableaux. A permutation $\pi \in S_k$ acts on a tableau $\tau$ as $(\pi  \tau)(i) = \pi(\tau(i))$, this induces an action on tabloids: $\pi \{\tau\} = \{\pi  \tau\}$. For example, for the permutation $\pi = (1 \, 3)$ and the tabloid $\tau$ from \cref{eq:example tabloid} we have
\[
\pi \{\tau\} = (1 \, 3) \ \ytableausetup{centertableaux,tabloids}
\begin{ytableau}
1 & 2 & 4 \\
3 & 5
\end{ytableau} = \ytableausetup{centertableaux,tabloids}
\begin{ytableau}
2 & 3 & 4 \\
1 & 5
\end{ytableau}.  
\]
A linear combination of tabloids is called a \emph{polytabloid} and the Specht modules are defined in terms of special polytabloids. Given a tableau $\tau$ of shape $\lambda$ with content $(1^k)$, let 
$\kappa_\tau = \sum_{\pi \in C_\tau} \sign(\pi) \pi$
and define the polytabloid $e_\tau = \kappa_\tau \{\tau\}$, that is, 
\[
e_\tau = \sum_{\pi \in C_\tau} \sign(\pi) \pi \{\tau\}.
\]
The \emph{Specht module} $S^\lambda$ is the submodule of $M^\lambda$ spanned by the polytabloids $e_\tau$ where $\tau \in T_{\lambda, (1^k)}$. The Specht module is a cyclic module and hence it is generated by any of the polytabloids $e_\tau$. From now on we will use as \emph{(canonical) generating element} of $S^\lambda$ the polytabloid $e_t$ corresponding to the tableau $t$ that satisfies $t(i) = i$ for $i \in [k]$.
The Specht modules are the irreducible modules of $S_k$.

\paragraph{Decomposition of a permutation module into Specht modules.} 

We next give the decomposition of a permutation module $M^\mu$ into Specht modules  (see, e.g.,~\cite{Sagan01}). 
Given a semistandard tableau~$\tau\in T_{\lambda \mu} $ and a tabloid~$\{t'\}$ of shape~$\lambda$ with content~$(1^k)$, we define the element\footnote{Note that we deviate here from the notation of \cite{Sagan01} where $\theta_\tau$ is used for the map from $S^\lambda$ to $\C[\widetilde T_{\lambda\mu}]$. The latter is isomorphic to $M^\mu$ and our $\tau \cdot \{t'\}$ corresponds to the composition of $\theta_\tau$ and this isomorphism, applied to $\{t'\}$.}
$$
\tau \cdot \{t'\} := \sum_{\tau'\sim \tau }  \{\tau'* t'\} \in M^{\mu}
$$
of shape~$\mu$ and content~$(1^k)$  
by letting the~$a$-th row  of the tabloid~$\{\tau' * t'\}$  contain all indices $i \in [k]$ for which~$\tau'((t')^{-1}(i)) = a$ (for~$a=1,\ldots,\text{height}(\mu)$).
Note that~$\{\tau' *  t'\}$ is well defined: the tableau~$t'$ has content~$(1^k)$, the tableau~$\tau'$ has content~$\mu$ so there are exactly~$\mu_a$ elements~$i \in [k]$ for which~$\tau'((t')^{-1}(i)) = a$. Moreover, the polytabloid $\tau \cdot \{t'\}$ does not depend on the choice of representative of the equivalence class $\{t'\}$ and hence it is also well defined.\footnote{To see this, suppose that~$t \sim t'$, i.e., $t' = t \circ \pi$, where $\pi \in S_{\lambda_1} \times S_{\lambda_2} \times \cdots \times S_{\lambda_{\mathrm{height}(\lambda)}}$. Then $\{\tau * t'\}$ has on the $a$th row the indices $i \in [k]$ for which $a = \tau((t')^{-1}(i)) = \tau(\pi^{-1} t^{-1} (i))) = \tau' (t^{-1}(i))$, where~$\tau' = \tau \circ \pi^{-1} \sim \tau$, and hence $\{\tau * t'\} = \{\tau' * t\}$. To conclude the argument observe that $\pi$ simply permutes the elements of $\{\tau':\tau' \sim \tau \}$.}

For the (unique) semistandard tableau~$\tau$ of shape $(4,1)$ with content $(3,2)$ and the tableau~$t$ of shape~$(4,1)$ and content~$(1^5)$ corresponding to $t(i)=i$, we have
\[ \{\tau * t \} = \left\{ \,
\ytableausetup{centertableaux,tabloids=off}  \begin{ytableau}
1 & 1 & 1 & 2\\
2 \\
\end{ytableau} \  *  \ 
\ \begin{ytableau}
1 & 2 & 3 & 4 \\
5
\end{ytableau}\ 
\right\}
= \ \ytableausetup{centertableaux,tabloids} \begin{ytableau}
1 & 2 & 3 \\
4 & 5
\end{ytableau},
\]
 and
\begin{align*} \tau \cdot \{t \} =
\ytableausetup{centertableaux,tabloids=off}  \begin{ytableau}
1 & 1 & 1 & 2\\
2 \\
\end{ytableau} \  \cdot  \ \ytableausetup{centertableaux,tabloids}  \begin{ytableau}
1 & 2 & 3 & 4 \\
 5
\end{ytableau}
= \ \ytableausetup{centertableaux,tabloids} \begin{ytableau}
1 & 2 & 3 \\
4 & 5
\end{ytableau} + \ \begin{ytableau}
1 & 2 & 4 \\
3 & 5
\end{ytableau} + \ \begin{ytableau}
1 & 3 & 4 \\
2 & 5
\end{ytableau} + \ \begin{ytableau}
2 & 3 & 4 \\
1 & 5
\end{ytableau}.
\end{align*}

The action of~$\tau$ on tabloids is extended linearly to polytabloids~$e_t$ (which are linear combinations of tabloids~$\{t'\}$) and then to all of~$S^{\lambda}$. In this way,~$\tau \cdot S^{\lambda}$ becomes a submodule of~$M^{\mu}$.
The decomposition of a permutation module $M^\mu$ into Specht modules is now given as (see, e.g.,~\cite{Sagan01}): 
\begin{align} \label{eq: decomp permutation module}
    M^\mu = \bigoplus_{\lambda \vdash k} \Big( \bigoplus_{\tau \in T_{\lambda \mu}} \tau \cdot  S^\lambda \Big).
\end{align}
From \cref{eq: example semistandard} it follows that $M^{(3,2)}$ decomposes as a direct sum of three Specht modules:
\[
M^{(3,2)} = \ \ytableausetup{centertableaux, tabloids=off}  \begin{ytableau}
1 & 1 & 1 & 2 & 2\\
\end{ytableau} \cdot S^{(5)}
\oplus 
\ytableausetup{centertableaux}  \begin{ytableau}
1 & 1 & 1 & 2\\
2 \\
\end{ytableau}  \cdot S^{(4,1)} \oplus \
\ytableausetup{centertableaux}  \begin{ytableau}
1 & 1 & 1 \\
2 & 2 \\
\end{ytableau} \ \cdot S^{(3,2)}.
\] 
The generating element of $\tau \cdot S^{\lambda}$ in $M^\mu$ is obtained by acting with $\tau$ on the generating element $e_t$ of $S^{\lambda}$. That is, we use as generating element of the $\tau$-th copy of $S^\lambda$ the vector 
\begin{equation*} 
v_\tau :=\tau \cdot e_t = \tau \cdot \sum_{c \in C_t} \sign(c) \, \{ct\} =  \sum_{\tau' \sim \tau} \sum_{c \in C_t} \sign(c) \,  \{\tau' * (ct)\}.
\end{equation*}

\subsection{Representative set}\label{reprsetSk}

We now consider the setting of noncommutative polynomials in variables $x_1,\ldots,x_k$ with symmetry coming from $S_k$. Let $t \in \N$ and let $V:= \C^{[k]^t}$. Let $S_k$ act naturally on $[k]^t$ and hence on~$V$. We obtain a first decomposition of $V$ by restricting to $S_k$-orbits. The elements of~$[k]^t / S_k$ correspond bijectively to \emph{set partitions} $P = \{P_1,P_2 \ldots, P_{|P|}\}$ of $[t]$ in at most $k$ parts (i.e., $|P| \leq k$ and $\bigsqcup_{i=1}^{|P|} P_i = [t]$). Each $P_i$ represents a set of indices that are assigned the same $j \in [k]$, and distinct $P_i$ and $P_{i'}$ are assigned distinct $j,j' \in [k]$ respectively. Throughout we let $r  = |P|$ denote the number of sets in the set partition $P$. We then have a first decomposition 
\begin{equation} \label{eq: first decomp of V}
V = \bigoplus_{\substack{P \, \in\, [k]^t / S_k}} V_P
\end{equation}
where $V_P$ is the vector space spanned by $w \in P$. For $r \in [k]$, recall that $\mu_r \vdash k$ is the partition $\mu_r := (k-r,1,\ldots,1)$. 
Then, for a $P \in [k]^t/S_k$ with $P=\{P_1,\ldots,P_r\}$ we have a natural bijection between $w \in P$ and tabloids of shape $\mu_r$ and content $(1^k)$:
\[
w  \quad \xrightarrow{\ \phi_P\ } \quad \ytableausetup{centertableaux,boxsize=1.5em,tabloids} \begin{ytableau}
\mathbf i & \cdots & \mathbf j \\
w(1) \\
w(2) \\
\vdots \\
w(r)
\end{ytableau}
\]
here the first row contains all elements in $[k] \setminus \{w(1),\ldots,w(r)\}$. This bijection respects the action of $S_k$, and therefore we see that $V_P$ is isomorphic to the permutation module $M^{\mu_r}$:
$V_P \cong M^{\mu_r}$.
Combining the decompositions in \cref{eq: first decomp of V,eq: decomp permutation module} gives the decomposition of $V$ into irreducible $S_k$-modules:
\begin{align*}
V  =  \bigoplus_{\substack{P \, \in\, [k]^t / S_k}} \phi_P^{-1} M^{\mu_r}    =  \bigoplus_{\lambda \, \vdash \, k} \left( \bigoplus_{\substack{P \, \in\, [k]^t / S_k}} \bigoplus_{\tau \in T_{\lambda \mu_r}} \phi_P^{-1}(\tau \cdot S^{\lambda}) \right).
\end{align*}
Note that in the above equation we allow $T_{\lambda \mu_r}$ to be empty: it is non-empty only if there exists a semistandard tableau of shape $\lambda$ and content $\mu_r$. Given the shape of $\mu_r$, $T_{\lambda,\mu_r}$ is non-empty if the first row of $\lambda$ has size at least $k-r$ and the height of $\lambda$ is at most $r+1$. This decomposition of $V$ gives rise to a representative set for the action of $S_k$ on $V$. Indeed, for a partition $\lambda \vdash k$, define
\begin{align*}
U_{\lambda}:= \left( u_{\tau,P} \,\,\, | \,\,\, P \in [k]^t/S_k, \,\, \tau \in T_{\lambda, \mu_r}  \right)
\end{align*} 
where
\begin{align} 
u_{\tau,P} := \phi_P^{-1}(v_{\tau}) &= \sum_{\tau' \sim   \tau }\sum_{c \in C_{t}} \mathrm{sgn}(c) \phi_P^{-1}(   \{\tau' *c  t\}). \label{eq: repr element}
\end{align}
Then the set 
$\{U_{\lambda} \,\, | \,\, \lambda \vdash k, \,\,\, \text{height}(\lambda) \leq \min\{k,t+1\} \}$ 
is representative for the action of~$S_k$ on~$V = \C^{[k]^t}$. Indeed, we have shown above that the corresponding irreducible submodules decompose $V$, moreover, by construction the $(\tau,P)$-th copy of $S^\lambda$ is isomorphic to $S^\lambda$ and its generating element is chosen according to this isomorphism. This shows that the elements of $U_\lambda$ satisfy the conditions of \cref{def: representative set}. 

\paragraph{The dimension of \texorpdfstring{$\mathrm{End}_{S_k}(\C^{[k]^t})$}{Sk-invariant endomorphisms on V}.} \label{sec: Sk-invariant endos}

Recall that the sum of the squares of the block sizes equals $\mathrm{dim}(\mathrm{End}_{S_k}(\C^{[k]^t}))$. One can show that the latter equals the number of orbits of pairs $(m_1,m_2)$ where $m_1$ and $m_2$ are noncommutative monomials of degree exactly $t$. Orbits of pairs $(m_1,m_2)$ naturally correspond to orbits of length-$2t$ monomials $m$ via $m = m_1^* m_2$. Hence the dimension of the above space of $S_k$-invariant endomorphisms equals the dimension of the space of $S_k$-invariant noncommutative polynomials of degree exactly~$2t$. As we have seen before (cf.~$V_P$), this equals the number of set partitions of $[2t]$ in at most $k$ parts. When $k \geq 2t$, this number is independent of $k$ and simply becomes the number of set partitions of a set of $2t$ elements. The number of set partitions of a set of $t$ elements is known as the~$t$-th \emph{Bell number}. The sequence of Bell numbers~$\{B_t\}_{t \in \Z_{\geq 0}}$ is known in the online encyclopedia of integer sequences as sequence~\texttt{A000110}; its first ten elements are 	1, 1, 2, 5, 15, 52, 203, 877, 4140, 21147. The sequence can be generated via the recurrence relation
$B_t = \sum_{j=0}^{t-1} \binom{t-1}{j} B_j$. We emphasize that for $k \geq 2t$ the sum of the squares of the block sizes of the symmetry reduced SDP equals $B_{2t}$ and is thus independent of $k$.

\paragraph{Computing the coefficients of the block-diagonalized SDP.}
To compute the coefficients of the block-diagonalized semidefinite program we may use the following formula.  Fix a partition $\lambda \vdash k$ and $u_{\tau,P}, u_{\sigma,Q} \in U_{\lambda}$, then for any $A \in \mathrm{End}_{S_k}(V)$ we have,
\begin{equation} \label{eq: inner prod Sk}
\begin{split}
    &\langle u_{\tau,P}, A u_{\sigma,Q} \rangle = \sum_{\substack{\tau' \sim \tau, \\ \sigma'\sim \sigma}} \sum_{c,c' \in C_{t}} \mathrm{sgn}(cc') A_{ \phi_P^{-1}( \tau' *  \{c  t\}), \phi_Q^{-1}( \sigma' *  \{c'  t\})}
                \\&  = \sum_{\substack{\tau' \sim \tau, \\ \sigma' \sim \sigma}} \sum_{c,c' \in C_{t}} \mathrm{sgn}(cc') A_{\phi_P^{-1}( \tau' *  \{ t\}), \phi_Q^{-1}( \sigma' *  \{c^{-1}c'  t\})}
    = |C_t|\sum_{\substack{\tau' \sim \tau, \\ \sigma'\sim \sigma}} \sum_{c \in C_{t}} \mathrm{sgn}(c) A_{\phi_P^{-1}( \tau' * \{ t\}),\phi_Q^{-1}( \sigma' *  \{c  t\})}
\end{split}
\end{equation}   
where in the last equality we use the group structure of $C_t$. 

At first glance, even for fixed level $t$, \cref{eq: inner prod Sk} could take time exponential in $k$ to evaluate since the number of terms can be exponentially large (take for instance $\lambda =(t^{k/t})$). However, one can show that it is possible to compute these inner products in time polynomial in~$k$ (for fixed level~$t$). 
Define commutative variables $z_{i,j}$ for $i \in [r_P+1], j \in [r_Q+1]$. Let $f_{\tau,\sigma}$ be the polynomial 
\begin{equation*}
    f_{\tau,\sigma}( z) = \sum_{\substack{\tau' \sim \tau, \\ \sigma' \sim \sigma}} \sum_{c,c' \in C_\lambda} \mathrm{sgn}(cc') \prod_{y \in [k]} z_{c\cdot \tau'(y), c'\cdot\sigma'(y)}. 
\end{equation*}
Consider a monomial $m = \prod_{y \in [k]} z_{i_y,j_y}$. A variable $z_{i,j}$ for $i,j>1$ appears in $m$ if $P_{i-1}$ and $Q_{j-1}$ are assigned the same element in $[k]$, and hence $m$ represents an $S_k$-orbit of $[k]^{t} \times [k]^t$. The coefficient of $m$ in $f_{\tau,\sigma}$ thus counts the number of $c,c' \in C_t$ and $\tau' \sim \tau, \sigma' \sim \sigma$ for which $(\phi_P^{-1}( \tau' *  \{c  t\}),\phi_Q^{-1}( \sigma' *  \{c'  t\}))$ are $S_k$-equivalent.
It was shown in~\cite[Sec.~3]{Gij09} that~$f_{\tau,\sigma}$ can be expressed as a linear combination of monomials in time polynomial in $k$, assuming the height of~$\tau,\sigma$ is fixed (which holds in our case since~$\max\{r_P+1, r_Q+1\}\leq t+1$). See also \cite[Prop.~3]{LPS17} for a different proof.

\subsection{
An example: homogeneous quadratic polynomials}
\label{sec: example 32}

To illustrate the concepts developed in the previous sections, we show how to use representation theory to block-diagonalize moment matrices indexed by homogenous quadratic polynomials in three variables that are invariant under the action of $S_3$. We express the concepts in the language of polynomials. (This example corresponds to the case $k=3$ and $t=2$.) 

For any~$L \in \R \langle x_1,x_2,x_3\rangle_{=4}^*$, we write~$M(L)$ for the moment matrix whose rows and columns are indexed by elements of~$\langle x_1,x_2,x_3\rangle_{=2}$. We show how to block-diagonalize the algebra of such matrices $M(L)$ that are invariant under the simultaneous action of~$S_3$ on their rows and columns. We can identify this algebra with $(\C^{[3]^2 \times [3]^2})^{S_3}=\mathrm{End}_{S_3}(\C^{[3]^2})$,  the algebra of $[3]^2 \times [3]^2$-matrices that are invariant under the same action. As discussed in \cref{sec:symmetry block}, block-diagonalizing this algebra amounts to decomposing the $S_3$-module $V :=\C^{[3]^2}$ into irreducible modules. This module can be identified with the space of trivariate homogeneous quadratics 
\[
 \C^{[3]^2} \simeq \C\langle x_1,x_2,x_3\rangle_{=2}
\]
where the group $S_3$ acts on this space by permuting the indices of $x_1,x_2,x_3$. 

We now follow the recipe provided in \cref{reprsetSk} to obtain this decomposition. The first step is to decompose $V$ into orbits of $S_3$. A permutation $\sigma \in S_3$ maps a monomial $x_i x_j$ to $x_{\sigma(i)} x_{\sigma(j)}$. It is then not hard to see that we have only two orbits: one formed by monomials with $i=j$ and one where $i \neq j$. This corresponds to \cref{eq: first decomp of V} from the previous section: we write 
\[
V =\C^{[3]^2}\cong \C \langle x_1,x_2,x_3 \rangle_2 = V_{11} \oplus V_{12}, 
\]
where~$V_{11} = \Span\{x_i^2\,: \, i \in [3]\}$   and $V_{12} = \Span\{x_i x_j \,:\, i,j \in [3] \text{ with } i \neq j\}$. 

In the language of the previous section, the space~$V_{11}$ corresponds to the partition~$P=\{\{1,2\}\}$ of~$[t]=[2]$, or the~$S_3$ orbit of~$(1,1)\in [k]^t$. We have~$V_{11} \cong M^{\mu_{1}}$ where the natural bijection between elements in this orbit and tabloids of shape~$\mu_1=(2,1)\vdash 3$ and content~$(1^3)$ is as follows: 
\begin{align*}
   x_1^2 \leftrightarrow \ytableausetup{centertableaux,boxsize=1.5em,tabloids} \begin{ytableau}
2 & 3 \\
1 \\
\end{ytableau}, \,\,\,\,    x_2^2 \leftrightarrow \ytableausetup{centertableaux,boxsize=1.5em,tabloids} \begin{ytableau}
1 & 3 \\
2 \\
\end{ytableau}, \,\,\,\,     x_3^2 \leftrightarrow \ytableausetup{centertableaux,boxsize=1.5em,tabloids} \begin{ytableau}
1 & 2 \\
3 \\
\end{ytableau}.
\end{align*}
Similarly, the space~$V_{12}$ corresponds to the partition~$P=\{\{1\},\{2\}\}$ of~$[t]=[2]$, or the~$S_3$ orbit of~$(1,2)\in [k]^t$. We have~$V_{12} \cong M^{\mu_{2}}$ where the bijection between elements in this orbit and tabloids of shape~$\mu_2=(1,1,1)\vdash 3$ and content~$(1^3)$ is given by: 
\begin{align*}
   x_ix_j\,\, \leftrightarrow \,\,\ytableausetup{centertableaux,boxsize=1.5em,tabloids}
   \begin{ytableau}
k \\ i \\
j \\
\end{ytableau}, \quad \text{for $i\neq j \in [3]$ and~$k$ the remaining element~$\neq i,j$ in~$[3]$}.
\end{align*} 
The advantage of identifying the orbits with these permutation modules is that we can use the representation theory of the symmetric group to further decompose $V_{11}$ and $V_{12}$ into irreducible modules. Here we translate the theory to the language of polynomials. The irreducible modules of $S_k$ are indexed by all partitions $\lambda$ of $k$, which for the case $k=3$ amounts to the partitions $(3)$, $(2,1)$ and $(1,1,1)$. The blocks of the block-diagonalized algebra will be labeled by these partitions and we can thus already note that there will be three blocks.  

As a concrete example, we compute one of the representative elements that correspond to the decomposition of $V_{11}$. 
Following \cref{eq: decomp permutation module}, the space~$V_{11}$ decomposes as 
$$
\bigoplus_{\lambda \vdash 3} \bigoplus_{\tau \in T_{\lambda\mu_1}} \phi^{-1}_{11}(\tau \cdot S^{\lambda}).
$$
This decomposition is labeled by partitions $\lambda \vdash 3$ and semistandard tableaux $\tau \in T_{\lambda \mu_1}$. In this small example this reduces to a decomposition into two parts: there are unique semistandard tableaux with content $\mu_1 = (2,1)$ for the partitions $(3)$ and $(2,1)$: 
\begin{align*} 
\lambda=(3):\ \ytableausetup{centertableaux,boxsize=1.5em,tabloids=off} \begin{ytableau}
1 & 1  & 2\\
\end{ytableau}\ ; \,\,\,\,    
\lambda=(2,1): \ \ytableausetup{centertableaux,boxsize=1.5em}  \begin{ytableau}
1 & 1\\
2 \\
\end{ytableau}\ .
\end{align*} 
There is no semistandard tableau of shape $(1,1,1)$ with content $(2,1)$. 
In the language of the previous section, $V_{11}$ corresponds to the partition $P = \{\{1,2\}\}$. We show how to compute the representative element $u_{\tau,P}$ for 
this $P$ and the semistandard tableau corresponding to the partition~$(2,1)$. 
We have 
\begin{align*}
u_{\tau,P} &= \sum_{\tau' \sim   \tau }\sum_{c \in C_{t}} \mathrm{sgn}(c) \phi_P^{-1}( \tau' *  \{c  t\})
\\&=  \phi_{11}^{-1}( \tau *  \{ t\}) +\text{sgn}( (13)) \phi_{11}^{-1}( \tau *  \{(1 3)  t\})
\\&=  \phi_{11}^{-1}\left(  \ytableausetup{tabloids,boxsize=1.5em}  \ \begin{ytableau}
1 & 2\\
3 \\
\end{ytableau}\ \right) - \phi_{11}^{-1}\left(  \ytableausetup{tabloids,boxsize=1.5em}  \ \begin{ytableau}
3 & 2\\
1 \\
\end{ytableau}\ \right) = x_3x_3-x_1x_1,
\end{align*} 
where for the second equality we use that there is a unique tableau that is row-equivalent to $\tau$ (namely $\tau$ itself) and the column-stabilizer corresponding to this shape (and its standard labeling~$t$) consists of the identity permutation and the permutation $(1 \, 3)$. For completeness we mention that the other representative element corresponding to $V_{11}$, the one associated to $\tau=   \ytableausetup{centertableaux,tabloids=off,boxsize=1.5em}  \begin{ytableau}
1 & 1 &2\\
\end{ytableau}$, is $x_1x_1+x_2x_2+x_3x_3$. 

For the space~$V_{12}$ one can do the same, the space corresponds to the partition~$P=\{\{1\},\{2\}\}$ of~$[t]=[2]$, or the~$S_3$ orbit of~$(1,2)\in [k]^t$. One computes representative elements~$u_{\tau,P}$ for each of the $\phi^{-1}_{P}(\tau \cdot S^{\lambda})$ with $P=\{\{1\},\{2\}\}$ via the procedure illustrated above where we now have the following semistandard tableaux for the three choices of partitions $\lambda$: 
\[
\lambda = (3): \ \ytableausetup{centertableaux,boxsize=1.5em,tabloids=off} \begin{ytableau}
1 & 2  & 3\\
\end{ytableau}\ ; \,\,\,\,    
\lambda = (2,1): \ \ytableausetup{centertableaux,boxsize=1.5em}  \begin{ytableau}
1 & 2\\
3 \\
\end{ytableau}\ , \,\,\,\,    \ytableausetup{centertableaux,boxsize=1.5em}  \begin{ytableau}
1 & 3\\
2 \\
\end{ytableau}\ ; \,\,\,\,    \lambda = (1,1,1): \ \ytableausetup{centertableaux,boxsize=1.5em}  \begin{ytableau}
1 \\
2 \\
3\\
\end{ytableau} \ .
\]

A representative set for the action of~$S_3$ on~$\C^{[3]^2}$ is then denoted by~$\{U_{\lambda} \, | \, \lambda \vdash 3, \, \text{height}(\lambda) \leq 3 \} $, and it is obtained by grouping the representative elements for~$V_{11}$ and~$V_{12}$ according to their partition $\lambda$. For example, both $V_{11}$ and $V_{12}$ contain a single irreducible sub-module that is isomorphic to $S^{(3)}$ and thus $U_{(3)}$ contains two elements; these correspond to the polynomials 
\[
\sum_{i \in [3]} x_ix_i \text{ and  } \sum_{\substack{(i,j) \in [3]\\ i \neq j}} x_ix_j.
\]Formally, we can view the representative set as a collection of matrices by identifying a polynomial with its (column) vector of coefficients; $U_{(3)}$ is a $9 \times 2$ matrix. 
The remaining two matrices, $U_{(2,1)}$ and $U_{(1,1,1)}$ have columns corresponding to the following polynomials: 
\begin{align*} 
U_{(2,1)} :& \qquad x_3x_3-x_1x_1, \quad x_2x_3-x_2x_1+x_1x_3-x_3x_1, \quad x_3x_2-x_1x_2+x_3x_1-x_1x_3,\\
U_{(1,1,1)} :& \qquad x_1x_2 + x_2x_3+ x_3x_1-x_2x_1-x_1x_3-x_3x_2.
\end{align*}

Having computed the representative set, we finally obtain a block-diagonalization of~$(\C^{[3]^2 \times [3]^2})^{S_3}$ by computing (cf.~\cref{prop: symmetry reduction,eq: repr element})
\[
U_{(3)}\T M(L) U_{(3)} \oplus U_{(2,1)}\T M(L) U_{(2,1)}  \oplus U_{(1,1,1)}\T M(L) U_{(1,1,1)}.
\]
The matrices~$U$ technically contain coefficient vectors of polynomials and the moment matrix~$M(L)$ contains the values of~$L$ on monomials, but by viewing the columns of each representative matrix~$U_\lambda$ as a polynomial, we can compute the~$\ell j$-th entry~$U_{\lambda,\ell} \T M(L) U_{\lambda,j}$ of the block corresponding to $\lambda$ by expanding~$L(U_{\lambda,\ell}^*U_{\lambda,j})$. Here $U_{\lambda,j}$ and $U_{\lambda,\ell}$ are the polynomials corresponding to columns $j$ and $\ell$ of~$U_\lambda$, respectively.
As an example, we display the $(1,2)$-entry of the block corresponding to~$U_{(3)}$:
\begin{align*}
U_{(3),1}\T M(L) U_{(3),2} = L\left(U_{(3),1}^* U_{(3),2}\right) &=L\Big(\big(\sum_{i \in [3]} x_ix_i\big)^*\big(\sum_{\substack{(i,j) \in [3]\\ i \neq j}} x_ix_j\big)\Big)\\&=  6(L(x_1x_1x_1x_2)+L(x_1x_1x_2x_1) +L(x_1x_1x_2x_3)),
\end{align*}
where for the last equality we use that $L$ is constant on orbits. This allows us to express the entry as a linear combination of the linear functional evaluated at orbit representatives (here we use as orbit representative the lexicographically smallest element in the $S_3$-orbit).\footnote{For completeness we mention that the case~$t=2$ and general~$k$ was studied in~\cite[Sec.~3.1, App.~2]{LPS17}, where a representative set for the action of~$S_k$ on~$[k]^2$ together with the explicit formulas in the corresponding block-diagonalization are given. }

\section{Preliminaries on the representation theory of \texorpdfstring{$S_d \wr S_k$}{Sd wr Sk}} \label{sec: rep wreath}

The irreducible modules of $S_d \wr S_k$ are well known, see for example~\cite{Ker71, MacDonald80}. Here we collect the necessary definitions and constructions that lead to the irreducible modules for the wreath product of $S_d \wr S_k$. 

\paragraph{Notation.}
In the representation theory of $S_k$ partitions and compositions play a large role, for the wreath product we moreover need the notion of \emph{multipartitions}. Let $\ell \in \N$, then an $\ell$-multipartition of $k$ is an $\ell$-tuple $\underline \Lambda = (\Lambda^1,\ldots, \Lambda^\ell)$ such that each $\Lambda^a$ is a partition and $\sum_{a \in [\ell]} |\Lambda^a| = k$. We write $\underline \Lambda \vdash k$ and, throughout, underline each symbol that corresponds to an $\ell$-multipartition. To an $\ell$-multipartition $\underline \Lambda$ we associate the \emph{$\ell$-composition} $|\underline \Lambda| = (|\Lambda^1|,\ldots, |\Lambda^\ell|)$ of $k$. 

\paragraph{Induction.}
Two natural modules of $S_d \wr S_k$ involve the notion of induction which we recall here for general groups for convenience and to fix notation. Let~$G$ be a group and~$H$ be a subgroup of~$G$. If~$V$ is a module of~$H$, we define the \emph{induced module} $\left. V \vphantom{\sum}\right\uparrow_H^G$ as follows.  Let~$R:=\{r_1,\ldots,r_s\}$ be a set consisting of representatives of the (left) cosets of~$H$ in~$G$, so $|R|=[G  : H]$. Then the elements of~$\left. V \vphantom{\sum}\right\uparrow_H^G$ are sums of the form $\alpha_1 (r_1,v_1) + \ldots + \alpha_s (r_s,v_s)$, for $v_1,\ldots,v_s \in V$ and scalars~$\alpha_1,\ldots,\alpha_s \in \C$. 
 An element $g\in G$ acts on an element~$(r_i, v)$ via $g  (r_i, v) = (r_j, h v)$, where~$r_j \in R$ and~$h \in H$ are the unique elements such that~$gr_i = r_jh$. This action is extended linearly to $\left. V \vphantom{\sum}\right\uparrow_H^G$. 

\paragraph{Basic constructions of modules.}
Modules of $S_d$ and $S_k$ can be used to construct modules for $S_d\wr S_k$. We list some essential constructions. Let $X$ be an $S_d$-module. We define an $S_d \wr S_k$-module $X^{\tilde \boxtimes k}$ as follows: as vector space it equals $X^{\otimes k}$ and the action of an element $(\bm \alpha,\beta) = (\alpha_1,\ldots,\alpha_k;\beta) \in S_d \wr S_k$ on a rank-1 tensor $\otimes_{i \in [k]} x_i$ is 
\[
(\alpha_1,\ldots,\alpha_k;\beta)  \bigotimes_{i \in [k]} x_i = \bigotimes_{i \in [k]} \alpha_{i}  x_{\beta^{-1}(i)}.
\]
A similar construction can be used to create a module for the wreath product between $S_d$ and a \emph{Young subgroup} of $S_k$. A Young subgroup of $S_k$ associated to an $\ell$-composition $\bm k$ of $k$ is the subgroup $S_{\bm k} := S_{\{1,\ldots,k_1\}} \times \cdots \times S_{\{1+\sum_{a < \ell} k_a, k\}}$ of $S_k$. 
For $S_d$-modules $X_1,\ldots, X_\ell$ we define the $S_d \wr S_{\bm k}$-module $(X_1,\ldots,X_\ell)^{\tilde \boxtimes \bm k}$ as follows:
\[
(X_1,\ldots,X_\ell)^{\tilde \boxtimes \bm k} := X_1^{\tilde \boxtimes k_1} \otimes X_2^{\tilde \boxtimes k_2} \otimes \cdots \otimes X_\ell^{\tilde \boxtimes k_\ell},
\]
where $(\bm \alpha, \beta) = (\alpha_1,\ldots, \alpha_k,\beta) \in S_d \wr S_{\bm k}$ acts on an element $\otimes_{i \in [k]} x_i$ as 
\begin{equation} \label{eq: wreath action on boxtimes}
(\bm \alpha; \beta)  \bigotimes_{i \in [k]} x_i = \bigotimes_{i \in [k]} \alpha_i  x_{\beta^{-1}(i)}. 
\end{equation}
In what follows, it will be convenient to work with a module that is closely related to the above $S_d \wr S_{\bm k}$-module, but where the tensor legs are permuted. For a tuple $\bm j \in [\ell]^k$ we consider the Young subgroup 
$S_{\bm j} := S_{\bm j^{-1}(1)} \times \cdots \times S_{\bm j^{-1}(\ell)}$
and we define the $S_d \wr S_{\bm j}$-module 
\begin{equation*} 
(X_1,\ldots,X_\ell)^{\tilde \boxtimes \bm k(\bm j)}_{\bm j}
\end{equation*}
which equals $\bigotimes_{i \in [k]} X_{j(i)}$ as vector space. The $S_d \wr S_{\bm j}$ action is defined by identifying, for each $a \in [\ell]$, the module $(X_1,\ldots,X_\ell)^{\tilde \boxtimes \bm k}_{\bm j}$ restricted to the coordinates for which $j(i)=a$ with the $S_d \wr S_{\bm j^{-1}(a)}$-module~$X_a^{\tilde \boxtimes k_a}$. That is, $(\bm \alpha, \beta) \in S_d \wr S_{\bm j}$ acts on an element $\otimes_{i \in [k]} x_i$ exactly as in \eqref{eq: wreath action on boxtimes}.
The following construction is used to combine $S_d \wr S_k$-modules and $S_k$-modules.

\begin{definition}
Let~$H$ be a subgroup of~$S_k$, $G$ a subgroup of~$S_d$, $Y$ an~$H$-module, and~$Z$ a $(G \wr H)$-module.
Then define $Z \oslash Y := Z \otimes Y$ as vector space, with action
$(\bm \alpha, \beta)  (x,y) = ((\bm \alpha,\beta)  x ,\beta y)$,
so that~$Z \oslash Y$ is a~$(G \wr H)$-module.\footnote{Alternatively, it holds that~$Z \oslash Y= Z \otimes \mathrm{Inf}_H^{G\wr H} Y$, where the \emph{inflation} $\mathrm{Inf}_H^{G\wr H} Y$ of~$Y$ from $H$ to~$G \wr H$ is~$Y$ as vector space, with action $(\bm \alpha, \beta) y = \beta y$.}
\end{definition}

\paragraph{Analogues of Specht modules for $S_d \wr S_k$.}
In what follows, let $\nu_1,\ldots, \nu_{\ell} \vdash d$ be a complete list of partitions of $d$, sorted lexicographically. So $\ell$ is the number of partitions of $d$. For induction, we use $d \wr k$ as shorthand for $S_d \wr S_k$ (and similarly for $d \wr \bm j$ and $d \wr |\Lambda|$). Using the above constructions, one can define an analogue of Specht modules for the wreath product $S_d \wr S_k$. 
\begin{definition} \label{def: Specht module wreath}
For an $\ell$-multipartition $\underline \Lambda$ of $k$, the \emph{Specht module} $S^{\underline \Lambda}$ is defined as 
\[
S^{\underline \Lambda} := \Big[(S^{\nu_1}, \ldots, S^{\nu_{\ell}})^{\tilde \boxtimes |\underline\Lambda|} \oslash (S^{\Lambda^1} \boxtimes \cdots \boxtimes S^{\Lambda^\ell})\Big]\left.\vphantom{\Big]}\right\uparrow_{d \wr |\underline \Lambda|}^{d \wr k}, 
\]
and for $\bm j \in [\ell]^k$ with $\bm j^{-1}(a) = |\Lambda^a|$ (for all $a \in [\ell]$) we define the module $S_{\bm j}^{\underline \Lambda}$ as 
\[
S^{\underline \Lambda}_{\bm j} := \Big[(S^{\nu_1}, \ldots, S^{\nu_{\ell}})^{\tilde \boxtimes |\underline\Lambda|}_{\bm j} \oslash (S^{\Lambda^1} \boxtimes \cdots \boxtimes S^{\Lambda^\ell})\Big]\left.\vphantom{\Big]}\right\uparrow_{d \wr \bm j}^{d \wr k}.
\]
\end{definition}
The modules $S^{\underline \Lambda}$ form a complete set of irreducible $S_d\wr S_k$-modules (cf.~\cite{MacDonald80,CT03}) and therefore can be used to decompose any (finite-dimensional) $S_d \wr S_k$-module. Observe that $S^{\underline \Lambda} = S^{\underline \Lambda}_{\bm j}$ for the $k$-tuple $\bm j$ for which, for each $a \in [\ell]$ with $|\Lambda^a|>0$, we have $\bm j^{-1}(a) = \{1+\sum_{j<a} |\Lambda^a|, \sum_{j \leq a} |\Lambda^a|\}$. Below in \cref{lem: with equals without j} we show that in fact for every $\bm j$ with $|\bm j^{-1}(a)| = |\Lambda^a|$ we have that $S^{\underline \Lambda}$ and $S^{\underline \Lambda}_{\bm j}$ are $S_d \wr S_k$-isomorphic. We first describe the generating elements of $S^{\underline \Lambda}_{\bm j}$. 

For $\bm j \in [\ell]^k$ the canonical generating element of $S^{\underline \Lambda}_{\bm j}$ is obtained by combining the generating elements of the respective Specht modules of $S_d$ and $S_k$ as follows. We define for $i \in [k]$ and $a \in [\ell]$
\[
v_i = \kappa_{t_{{\bm j}(i)}}\{t_{{\bm j}(i)}\}, \qquad u_a = \kappa_{t_{{\Lambda^a}}} \{t_{\Lambda^a}\}
\]
 where $t_{{\bm j}(i)}$ is the usual fixed tableau of shape $\nu_{{\bm j}(i)}$ and where $t_{\Lambda^a}: \bm j^{-1}(a) \to \bm j^{-1}(a)$ is the standard tableau of shape $\Lambda^a$. Throughout, as coset representatives of $S_d \wr S_{\bm j}$ in $S_d\wr S_k$ we consider the elements $(\id,b) \in S_d \wr S_k$ for which $b \in S_k$ is an increasing function on each $\bm j^{-1}(a)$, i.e., $b$ is such that if $i,i' \in \bm j^{-1}(a)$ with $i<i'$ then $b(i) < b(i')$. The canonical generating element of $S^{\underline \Lambda}_{\bm j}$ is the vector $\big((\id,\id),(\left(\bigotimes_{i \in [k]} v_i\right) \otimes \left(\bigotimes_{a \in [\ell]} u_a\right)\big)$. To emphasize the difference between $S_d$- and $S_k$-modules we write 
\[
\big((\id,\id),(v_1,\ldots,v_k ; u_1,\ldots,u_\ell )\big) := \Big((\id,\id),\Big(\Big(\bigotimes_{i \in [k]} v_i\Big) \otimes \Big(\bigotimes_{a \in [\ell]} u_a\Big)\Big)\Big).
\]

\begin{lemma} \label{lem: with equals without j}
Let $\underline \Lambda$ be an $\ell$-multipartition of $k$ and let $\bm j \in [\ell]^k$ be such that $|\bm j^{-1}(a)|=|\Lambda^a|$ for all $a \in [\ell]$. Then the $S_d \wr S_k$-module $S^{\underline \Lambda}$ is isomorphic to the $S_d \wr S_k$-module $S^{\underline \Lambda}_{\bm j}$. 
\end{lemma}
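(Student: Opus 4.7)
The plan is to construct an explicit $S_d\wr S_k$-equivariant isomorphism $\Psi:S^{\underline\Lambda}\to S^{\underline\Lambda}_{\bm j}$. Let $\bm j_0\in[\ell]^k$ denote the ``standard'' tuple used to define $S^{\underline\Lambda}$, i.e., the one for which $\bm j_0^{-1}(a)=\{1+\sum_{b<a}|\Lambda^b|,\ldots,\sum_{b\le a}|\Lambda^b|\}$ for each $a\in[\ell]$ with $|\Lambda^a|>0$, so that $S^{\underline\Lambda}=S^{\underline\Lambda}_{\bm j_0}$. Since $|\bm j^{-1}(a)|=|\Lambda^a|=|\bm j_0^{-1}(a)|$ for every $a$, there exists $\sigma\in S_k$ with $\sigma(\bm j_0^{-1}(a))=\bm j^{-1}(a)$ for all $a$, equivalently $\bm j\circ\sigma=\bm j_0$. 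A short computation using the product rule of the wreath product gives $(\id,\sigma)(\bm\alpha,\tau)(\id,\sigma)^{-1}=(\sigma*\bm\alpha,\sigma\tau\sigma^{-1})$, and this conjugation restricts to a group isomorphism $S_d\wr S_{\bm j_0}\xrightarrow{\sim} S_d\wr S_{\bm j}$.

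Writing $W_{\bm j}:=(S^{\nu_1},\ldots,S^{\nu_\ell})^{\tilde\boxtimes|\underline\Lambda|}_{\bm j}\oslash(S^{\Lambda^1}\boxtimes\cdots\boxtimes S^{\Lambda^\ell})$ for the inner module that gets induced (and similarly $W_{\bm j_0}$), I would define a linear isomorphism $\phi:W_{\bm j_0}\to W_{\bm j}$ as follows. On the $\tilde\boxtimes$-factor, $\phi$ permutes tensor positions by $\sigma$: it sends $\bigotimes_{i\in[k]}v_i\in\bigotimes_i S^{\nu_{\bm j_0(i)}}$ to $\bigotimes_{i\in[k]}v_{\sigma^{-1}(i)}\in\bigotimes_i S^{\nu_{\bm j(i)}}$, which is well-defined because $\bm j(i)=\bm j_0(\sigma^{-1}(i))$. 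On each $S^{\Lambda^a}$ in the $\oslash$-factor, I would transport along the bijection $\sigma|_{\bm j_0^{-1}(a)}\colon \bm j_0^{-1}(a)\to\bm j^{-1}(a)$, simply relabelling tableau entries. The key identity to verify is that $\phi$ is a conjugation-intertwiner:
\[
\phi(h\cdot w)=\bigl((\id,\sigma)\,h\,(\id,\sigma)^{-1}\bigr)\cdot\phi(w)\qquad\text{for all }h\in S_d\wr S_{\bm j_0},\ w\in W_{\bm j_0}.
\]
On the $\tilde\boxtimes$-part this is a direct check using $(\sigma*\bm\alpha)_i=\alpha_{\sigma^{-1}(i)}$ and the fact that composing the permutation $\tau^{-1}$ with the relabelling by $\sigma$ yields $(\sigma\tau\sigma^{-1})^{-1}$; on the $\oslash$-part it reduces to the standard fact that a Specht module $S^{\Lambda^a}$ is independent (up to isomorphism) of the labelling of its ground set.

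Finally, I would pass to the induced modules via the tensor-product description $\mathrm{Ind}_H^G V=\C[G]\otimes_{\C[H]}V$. Setting $g=(\id,\sigma)$, $H_0=S_d\wr S_{\bm j_0}$, and $H=S_d\wr S_{\bm j}=gH_0g^{-1}$, I would define $\Psi(x\otimes_{H_0}w):=xg^{-1}\otimes_H\phi(w)$. The intertwining identity above is exactly what is needed for $\Psi$ to respect the relation $xh\otimes w=x\otimes hw$ in the balanced tensor product, so $\Psi$ is well-defined; $S_d\wr S_k$-equivariance is immediate since the group acts by left multiplication on the $\C[G]$-factor, and bijectivity follows by constructing the inverse analogously from $\sigma^{-1}$. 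The main obstacle is bookkeeping: the two roles of $\sigma$ (permuting the $S_d$-indices $\bm\alpha$ via $\sigma*(\cdot)$, and relabelling $S_k$-tableaux inside each $S^{\Lambda^a}$) must be kept consistent while unravelling the wreath-product multiplication, but once the intertwining identity is established the rest is formal.
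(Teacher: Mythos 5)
Your proposal is correct, and the underlying isomorphism is in fact identical to the one the paper constructs: your $\sigma$ plays the role of the paper's $\pi$, and on the explicit coset-representative level your map $\Psi\big((\id,b)\otimes(v;u)\big)=(\id,b\sigma^{-1})\otimes(v_{\sigma^{-1}(\cdot)};\sigma u_\cdot)$ is literally the paper's $\phi$. The difference is purely in how the equivariance is verified. The paper checks it by hand: it shows $(\id,b\pi^{-1})$ is a valid coset representative for $S_d\wr S_{\bm j}$, and then establishes the identity $(\bm\alpha,\beta)(\id,b\pi^{-1})=(\id,\tilde b\pi^{-1})(\pi\bm{\tilde\alpha},\pi\tilde\beta\pi^{-1})$ and uses it to match the two sides. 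You instead observe that $(\id,\sigma)$ conjugates $S_d\wr S_{\bm j_0}$ onto $S_d\wr S_{\bm j}$, package the inner map $\phi$ as a conjugation-intertwiner, and then invoke the balanced-tensor-product model $\mathrm{Ind}_H^G V=\C[G]\otimes_{\C[H]}V$ so that equivariance and well-definedness are automatic. Your route is more conceptual and makes it transparent that this is the general phenomenon of induction from conjugate subgroups; the paper's route is self-contained and doesn't presume familiarity with that formalism. Both are sound, and the intertwining check you describe on the $\tilde\boxtimes$- and $\oslash$-factors does go through exactly as you indicate.
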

\begin{proof}
For $a \in [\ell]$ with $|\Lambda^a|>0$, let $I_a:= \{1+\sum_{j<a} |\Lambda^a|, \sum_{j \leq a} |\Lambda^a|\}$. Let $\pi \in S_k$ be the permutation that maps the $t$-th smallest element in $I_a$ to the $t$-th smallest element in $\bm j^{-1}(a)$ for each $a \in [\ell]$ and $t \in [|\Lambda^a|]$. We show this permutation defines an isomorphism between $S^{\underline 
\Lambda}$ and $S^{\underline \Lambda}_{\bm j}$: 
\[
\big((\id,b),(v_1,\ldots, v_k; u_1,\ldots, u_\ell)\big) \xmapsto{\phi} \big((\id,b\pi^{-1}),(v_{\pi^{-1}(1)},\ldots, v_{\pi^{-1}(k)}; \pi u_1,\ldots, \pi u_\ell)\big) 
\]
Observe that $(\id,b\pi^{-1})$ is indeed a coset representative of $S_d \wr S_{\bm j}$ in $S_d \wr S_k$: $b\pi^{-1} \in S_k$ and for $i, i' \in \bm j^{-1}(a)$ with $i<i'$ we have $\pi^{-1}(i) <  \pi^{-1}(i')$ and therefore $b\pi^{-1}(i) <  b\pi^{-1}(i')$.  Moreover, the set of coset representatives $(\id,b)$ of $S_d \wr S_{|\underline \Lambda|}$ is mapped bijectively to the set of coset representatives of $S_d \wr S_{\bm j}$ in $S_d \wr S_k$. Finally, for each coset the map is bijective (since it acts as a permutation).

We now show that the $S_d \wr S_k$ action is preserved. Let $(\bm \alpha, \beta) \in S_d \wr S_k$. By definition 
\begin{align*}
(\bm \alpha, \beta) \big((\id,b),(v_1,\ldots, v_k; u_1,\ldots, u_\ell)\big) &= \big((\id,\tilde b),(\tilde \alpha_1 v_{\tilde \beta^{-1}(1)},\ldots, \tilde \alpha_k v_{\tilde \beta^{-1}(k)}; \tilde \beta u_1,\ldots, \tilde \beta u_\ell)\big)
\end{align*}
where $(\id, \tilde b)$ is the unique coset representative such that $(\bm \alpha, \beta) (\id,b) = (\id, \tilde b) (\bm{\tilde \alpha},\tilde \beta)$ with $(\bm{\tilde \alpha},\tilde \beta) \in S_d \wr S_{|\Lambda|}$. Hence 
\begin{align*}
& \phi\big( (\bm \alpha, \beta) \big( (\id,b),(v_1,\ldots, v_k; u_1,\ldots, u_\ell)\big)\big) \\
&= \big((\id,\tilde b \pi^{-1}),(\tilde \alpha_{\pi^{-1}(1)} v_{\tilde \beta^{-1}\pi^{-1}(1)},\ldots, \tilde \alpha_{\pi^{-1}(k)} v_{\tilde \beta^{-1}\pi^{-1}(k)}; \pi \tilde \beta u_1,\ldots, \pi \tilde \beta u_\ell)\big).
\end{align*}
On the other hand, using that $(\bm \alpha,\beta)(\id,b) = (\id,\tilde b)(\bm{\tilde \alpha},\tilde \beta)$ one can verify that $(\bm \alpha,\beta)(\id,b \pi^{-1}) = (\id,\tilde b \pi^{-1}) (\pi \bm{\tilde \alpha},\pi \tilde \beta\pi^{-1})$. Then we have 
\begin{align*}
 (\bm \alpha, \beta)    \phi &\big((\id,b),(v_1,\ldots, v_k; u_1,\ldots, u_\ell)\big) \\
 &= (\bm \alpha, \beta)\big((\id,b\pi^{-1}),(v_{\pi^{-1}(1)},\ldots, v_{\pi^{-1}(k)};  \pi u_1,\ldots,   \pi u_\ell)\big)   \\&= \left( (\id,\tilde b \pi^{-1}),(\pi \tilde{\alpha}, \pi \tilde{\beta} \pi^{-1})(v_{\pi^{-1}(1)},\ldots, v_{\pi^{-1}(k)};  \pi u_1,\ldots,   \pi u_\ell) \right)
    \\&= \big((\id,\tilde b \pi^{-1}),(\tilde \alpha_{\pi^{-1}(1)} v_{\tilde \beta^{-1}\pi^{-1}(1)},\ldots, \tilde \alpha_{\pi^{-1}(k)} v_{\tilde \beta^{-1}\pi^{-1}(k)}; \pi \tilde \beta u_1,\ldots, \pi \tilde \beta u_\ell)\big).
\end{align*}
So $ \phi$ respects the~$S_d \wr S_k$-action.
\end{proof}

\section{Representative set for the action of~\texorpdfstring{$S_d \wr S_k$}{Sd wreath Sk} on \texorpdfstring{$([d] \times [k])^t$}{([d]-times-[k])t}} \label{sec:reprset wreath}

We now turn our attention to the setting of noncommutative polynomials in variables $x_{i,j}$ with $i \in [d], j \in [k]$. We consider the natural action of the wreath product $S_d \wr S_k$ on such polynomials. We construct a representative set for the action of $S_d \wr S_k$ on $\C^{([d]\times[k])^t}$. 

As before, we start by decomposing $\C^{([d] \times [k])^t}$ using the equivalence classes $([d]\times[k])^t / (S_d \wr S_k)$. Such an equivalence class corresponds to a pair $(P,\mathbf Q)$ where $P = \{P_1,\ldots, P_r\}$ is a set partition of $[t]$ in at most $k$ parts and $\mathbf Q = (Q_1,\ldots,Q_r)$ where each $Q_i$ is a set partition of $P_i$ in at most $d$ parts. This gives a first decomposition of our vector space:  
\begin{equation} \label{eq: first decomposition of V}
\C^{([d] \times [k])^t} = \bigoplus_{\substack{(P,\mathbf Q) \in ([d]\times[k])^t / (S_d \wr S_k)}} V_{P,\mathbf Q} 
\end{equation}
where $V_{P,\mathbf Q}$ is the vector space spanned by elements in the equivalence class $(P,\mathbf Q)$. For~$P$ and $\bm{Q} =(Q_1,\ldots,Q_r)$, set~$q_i:=|Q_i|$ for each~$i \in [r]$. The vector space $V_{P,\bm Q}$ is spanned by monomials $x_{i_1,j_1} \cdots x_{i_t,j_t}$ for which the set partition $P$ is used to assign \emph{distinct} bases ($j_a$'s) to variables in the monomial and the set partition $Q_i$ of $P_i$ is then used to assign $q_i = |Q_i|$ \emph{distinct} basis elements ($i_a$'s) to the variables in $P_i$. Below in \cref{sec: decomp of VPQ} we use the representation theory of the symmetric group to obtain a decomposition of each of the $S_d \wr S_k$-modules $V_{P,\bm Q}$. In \cref{sec: iso to Specht modules}, we show that this is in fact a decomposition into \emph{irreducible} $S_d \wr S_k$-modules by constructing isomorphisms to Specht modules~$S^{\underline \Lambda}$. In \cref{sec: explicit decomposition} we give an explicit description of the representative set. We conclude the section with a worked out example of one representative element in the language of polynomials, see \cref{sec: example 232}.

\subsection{\texorpdfstring{$V_{P,\bm Q}$}{VPQ} is a module for \texorpdfstring{$S_d \wr S_k$}{Sd wr Sk}} \label{sec: decomp of VPQ}
We give a natural interpretation of $V_{P,\bm Q}$ in terms of tabloids. In what follows we use the partitions $\mu_r^{(k)} = (k-r,1^r)$ and $\mu_q^{(d)} = (d-q, 1^q)$. 
Consider the vector space 
\begin{equation*}
    W := (M^{\mu^{(d)}_{q_1}} \otimes \ldots \otimes M^{\mu^{(d)}_{q_r}}) \otimes  M^{\mu^{(k)}_r}. 
\end{equation*}
The standard basis vectors of $W$ are naturally associated to tuples of standard basis vectors $v_i \in M^{\mu^{(d)}_{q_i}}$ ($i \in [r]$) and $w \in M^{\mu^{(k)}_r}$ and we will use the notation
\[
(v_1,\ldots,v_r; w) := v_1 \otimes \cdots \otimes v_r \otimes w.
\]
We define an $S_d \wr S_k$-action on $W$ by letting $(\bm \alpha,\beta) \in S_d \wr S_k$ act on such a vector as 
\begin{equation} \label{eq: action on W}
(\bm \alpha,\beta) (v_1,\ldots,v_r; w) = (\alpha_{\beta w(1)} v_1,\ldots, \alpha_{\beta w(r)} v_r; \beta w).
\end{equation}
Here, for ease of notation, we use the convention that for integers $0\leq r <k$, a tabloid $w$ of shape $\mu^{(k)}_r$ has content 
\begin{equation} \label{eq: def w}
w =\quad  \ytableausetup{centertableaux,boxsize=1.5em,tabloids} \begin{ytableau}
\mathbf i & \cdots & \mathbf j \\
w(1) \\
w(2) \\
\vdots \\
w(r)
\end{ytableau}.
\end{equation}

\begin{remark} \label{rem: our W is nice}
We emphasize the difference between the module $W$ and the construction of $S_d \wr S_k$-modules described in \cref{sec: rep wreath}: the action $(\bm \alpha; \beta)  \bigotimes_{i \in [k]} x_i = \bigotimes_{i \in [k]} \alpha_i  x_{\beta^{-1}(i)}$ described in \cref{eq: wreath action on boxtimes} permutes the tensor legs and then applies $\alpha_i$ to the $i$th leg, whereas the action described in \cref{eq: action on W} instead permutes the $\alpha_i$ but leaves the tensor legs fixed. To define the action on $W$ we require the ``L-shape'' of the tabloid $w$ from \eqref{eq: def w}. The fact that the tensor legs are not permuted by the $S_d \wr S_k$-action on $W$ will be crucial in deriving our decomposition. 
\end{remark}

\begin{lemma} \label{lem: VPQ using tabloids}
We have $V_{P,\bm Q} \cong W$ as $S_d \wr S_k$-modules. 
\end{lemma}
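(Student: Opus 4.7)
The plan is to exhibit an explicit $\mathbb{C}$-linear bijection $\phi: W \to V_{P,\bm Q}$ on standard basis vectors and to verify it intertwines the $S_d \wr S_k$-actions. The combinatorial content is that a basis monomial $x_{i_1,j_1}\cdots x_{i_t,j_t}$ in the orbit $(P,\bm Q)$ is uniquely specified by an injection $w:[r]\to [k]$ (assigning a basis index to each part of $P$) together with, for each $a\in [r]$, an injection $v_a:[q_a]\to [d]$ (assigning a basis-element index to each part of $Q_a$). On the other hand, a standard tabloid basis vector of $M^{\mu^{(k)}_r}$ (resp.~$M^{\mu^{(d)}_{q_a}}$) is exactly such an injection: the ``L-shape'' $(k-r,1^r)$ forces the single-cell rows to be an ordered sequence of $r$ distinct values from $[k]$, while the first row is their (unordered) complement.

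First I fix once and for all an enumeration $P_1,\ldots,P_r$ of the parts of $P$ (already implicit in the ordered tuple $\bm Q=(Q_1,\ldots,Q_r)$) and enumerations $Q_a^1,\ldots,Q_a^{q_a}$ of the parts of each $Q_a$. Then I define
\[
\phi(v_1,\ldots,v_r;w) \;:=\; x_{a_1,b_1}\cdots x_{a_t,b_t}, \quad \text{where } b_s:=w(a) \text{ if } s\in P_a,\ a_s:=v_a(b) \text{ if } s\in Q_a^b,
\]
and extend linearly. By the bijective correspondences above, $\phi$ is a bijection on standard bases and hence a linear isomorphism $W\to V_{P,\bm Q}$.

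Second, I check $S_d \wr S_k$-equivariance. Applying $(\bm\alpha,\beta)$ directly to $\phi(v_1,\ldots,v_r;w)$ via $(\bm\alpha,\beta)\cdot x_{i,j}=x_{\alpha_{\beta(j)}(i),\,\beta(j)}$ transforms the factor at position $s\in Q_a^b$ (where $b_s=w(a)$ and $a_s=v_a(b)$) into $x_{\alpha_{\beta(w(a))}(v_a(b)),\,\beta(w(a))}$. On the $W$-side, \eqref{eq: action on W} sends $(v_1,\ldots,v_r;w)$ to $(\alpha_{\beta w(1)}v_1,\ldots,\alpha_{\beta w(r)}v_r;\beta w)$; the resulting tabloid $\beta w$ has entry $\beta(w(a))$ in its $a$-th single-cell row, and $\alpha_{\beta w(a)}v_a$ has entry $\alpha_{\beta w(a)}(v_a(b))$ in its $b$-th single-cell row. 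Applying $\phi$ then produces exactly the monomial just computed, so $\phi\circ(\bm\alpha,\beta)=(\bm\alpha,\beta)\circ\phi$.

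The only conceptually delicate point, rather than a technical obstacle, is the design of $W$ itself: the non-standard action \eqref{eq: action on W} (leaving the tensor legs fixed and permuting which $\alpha_i$ acts on which leg, cf.~\cref{rem: our W is nice}) is dictated precisely by the requirement that the $M^{\mu^{(d)}_{q_a}}$ factors encode labelings of the parts of $Q_a$ by basis elements, while $M^{\mu^{(k)}_r}$ encodes the labeling of the parts of $P$ by bases; with this ``L-shape'' choice, the relabeling action on monomials coming from $S_d\wr S_k$ lines up cleanly with the standard permutation-module actions on each tensor factor. Once this is granted, the verification above is bookkeeping.
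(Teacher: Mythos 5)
Your proposal is correct and takes essentially the same approach as the paper: you exhibit the same explicit bijection between standard basis elements (read off the tabloid entries as the ``which basis'' and ``which basis element'' labels assigned to the parts of $P$ and $Q_a$) and verify $S_d \wr S_k$-equivariance by a direct computation of the action on both sides. The only difference is that you write $\phi : W \to V_{P,\bm Q}$ while the paper writes its inverse $V_{P,\bm Q} \to W$; the content is identical.
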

\begin{proof}
We construct an explicit $S_d \wr S_k$-isomorphism. Let $\phi: V_{P,\bm Q} \to W$ be the map that sends a monomial $x_{i_1,j_1} \cdots x_{i_t,j_t}$ in $V_{P,\mathbf Q}$ to the standard basis vector $(v_1,\ldots, v_r; w)$ where 
$w \in M^{\mu^{(k)}_r}$ is the tabloid for which $w(i) \in [k]$ is the label of the basis assigned to the monomials corresponding to $P_i$, and similarly $v_i(j) \in [d]$ is the basis element assigned to the monomials corresponding to the $j$th set in the set partition $Q_i$. The map $\phi$ is a bijection between the standard basis elements of $V_{P,\bm Q}$ and the standard basis elements of $W$ and we extend it linearly to all of $V_{P,\bm Q}$. It remains to show that $\phi$ is $S_d \wr S_k$-equivariant. Let $x_{i_1,j_1} \cdots x_{i_t,j_t} \in V_{P,\bm Q}$, define $(v_1,\ldots, v_r; w)= \phi(x_{i_1,j_1} \cdots x_{i_t,j_t})$, and let $(\bm \alpha, \beta) \in S_d \wr S_k$. We show that $\phi((\bm \alpha,\beta) (x_{i_1,j_1} \cdots x_{i_t,j_t}))$ agrees with \cref{eq: action on W}. Recall that 
\[
(\bm \alpha,\beta) (x_{i_1,j_1} \cdots x_{i_t,j_t}) = x_{\alpha_{\beta(j_1)}(i_1),\beta(j_1)} \cdots x_{\alpha_{\beta(j_t)}(i_t),\beta(j_t)}
\]
and therefore the assignment of bases and basis elements after the action of $(\bm \alpha,\beta)$ is as follows: the monomials corresponding to $P_i$ are assigned basis $\beta(w(i))$ and the monomials corresponding to the $j$th set in the set partition $Q_i$ are assigned basis element $\alpha_{\beta(w(i))} v_i(j)$. This shows that indeed 
\begin{align*}
\phi((\bm \alpha,\beta) (x_{i_1,j_1} \cdots x_{i_t,j_t})) &= (\alpha_{\beta w(1)} v_1,\ldots, \alpha_{\beta w(r)} v_r; \beta w) \\
&= (\bm \alpha, \beta) (v_1,\ldots,v_r; w) = (\bm \alpha,\beta) \phi(x_{i_1,j_1} \cdots x_{i_t,j_t}). \qedhere
\end{align*}
\end{proof}
We now use known results from the representation theory of the symmetric group to obtain a decomposition of $W$. Recall that $\nu_1,\ldots, \nu_\ell \vdash d$ is a complete list of partitions of $d$, sorted lexicographically, and hence that $\ell$ is the number of partitions of $d$. For $\mathbf j \in [\ell]^r$, let 
\begin{align} \label{eq: Tdjq}
\mathbf T^d_{\mathbf j,\mathbf q} = \bigtimes_{i \in [r]} T_{\nu_{j(i)},\mu^{(d)}_{q_i}}.
\end{align}
For each $\bm \sigma \in \bm T^d_{\bm j,\bm q}$ we define an $S_d \wr S_k$-submodule of $W$:
\[
W_{\bm \sigma} := (\sigma_1 \cdot S^{\nu_{j(1)}} \otimes \ldots \otimes \sigma_r \cdot S^{\nu_{j(r)}}) \otimes  M^{\mu^{(k)}_r}.
\]
Applying \cref{eq: decomp permutation module} to each of the first $r$ coordinates of $W$ shows the following.
\begin{lemma} \label{lem: first decomposition of W}
We have $W = \bigoplus_{\bm j \in [\ell]^r} \bigoplus_{\bm \sigma \in \bm T^d_{\bm j,\bm q}} W_{\bm \sigma}.$ 
\end{lemma}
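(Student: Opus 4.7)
My plan is to deduce the decomposition in two stages: first establish it as a vector space decomposition by applying the known decomposition of a single permutation module $M^{\mu^{(d)}_{q_i}}$ to each of the first $r$ tensor factors, and then verify that each summand is actually an $S_d \wr S_k$-submodule.

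First, by the decomposition of a permutation module into Specht modules recalled in equation~\eqref{eq: decomp permutation module}, each tensor factor satisfies
\[
M^{\mu^{(d)}_{q_i}} \;=\; \bigoplus_{j(i) \in [\ell]} \bigoplus_{\sigma_i \in T_{\nu_{j(i)},\mu^{(d)}_{q_i}}} \sigma_i \cdot S^{\nu_{j(i)}}.
\]
Since tensor products distribute over direct sums, and tensoring on the right by $M^{\mu^{(k)}_r}$ preserves this, I obtain the vector space identity
\[
W \;=\; \bigoplus_{\bm j \in [\ell]^r} \bigoplus_{\bm \sigma \in \bm T^d_{\bm j,\bm q}} W_{\bm \sigma},
\]
where the right-hand side is the claimed expression.

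It remains to check that each $W_{\bm \sigma}$ is closed under the $S_d \wr S_k$-action defined in~\eqref{eq: action on W}. Take $(v_1,\ldots,v_r; w) \in W_{\bm \sigma}$ with $v_i \in \sigma_i \cdot S^{\nu_{j(i)}}$ and let $(\bm \alpha,\beta) \in S_d \wr S_k$. By definition,
\[
(\bm \alpha,\beta)  (v_1,\ldots,v_r; w) \;=\; (\alpha_{\beta w(1)} v_1,\ldots, \alpha_{\beta w(r)} v_r; \beta w).
\]
Now $\alpha_{\beta w(i)} \in S_d$, and $\sigma_i \cdot S^{\nu_{j(i)}}$ is by construction an $S_d$-submodule of $M^{\mu^{(d)}_{q_i}}$, so $\alpha_{\beta w(i)} v_i \in \sigma_i \cdot S^{\nu_{j(i)}}$ for each $i$. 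Similarly, $\beta w \in M^{\mu^{(k)}_r}$ since $M^{\mu^{(k)}_r}$ is an $S_k$-module. Hence the image lies in $W_{\bm \sigma}$, confirming that $W_{\bm \sigma}$ is an $S_d \wr S_k$-submodule. Extending by linearity gives the claim.

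The only subtle point — essentially the content of Remark~\ref{rem: our W is nice} — is that the $S_k$-part of the action on $W$ does \emph{not} permute the tensor legs of the first $r$ factors, it only reshuffles which $\alpha_i$ gets applied to each $v_i$ and permutes the tabloid $w$. This is precisely what makes each~$W_{\bm \sigma}$ invariant: permuting the tensor legs would mix the factors~$\sigma_i \cdot S^{\nu_{j(i)}}$ for different~$i$ (which would generally have different multi-indices~$\bm j$ and tableaux~$\bm \sigma$), destroying invariance, whereas here each factor is preserved individually by an $S_d$-action. No real obstacle arises; the lemma is essentially bookkeeping combining~\eqref{eq: decomp permutation module} with the specific form of the $S_d \wr S_k$-action introduced for the ``L-shaped'' module~$W$.
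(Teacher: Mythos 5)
Your proof is correct and follows essentially the same route as the paper, which applies \eqref{eq: decomp permutation module} factor-wise to $M^{\mu^{(d)}_{q_1}}\otimes\cdots\otimes M^{\mu^{(d)}_{q_r}}$ and takes for granted (as asserted when $W_{\bm\sigma}$ is defined) that each $W_{\bm\sigma}$ is an $S_d\wr S_k$-submodule. Your explicit verification of that invariance, resting on the observation that the L-shaped action \eqref{eq: action on W} applies $\alpha_{\beta w(i)}$ to the $i$-th tensor leg without permuting the legs themselves, usefully spells out what the paper leaves implicit; the only minor imprecision is that $w$ should be taken to be a tabloid (so that $\beta w(i)$ is defined) before extending linearly.
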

We proceed by viewing the above module $W_{\bm \sigma}$ as an induced module, where the induction is from a group that corresponds to $\bm j$. The $r$-tuple $\bm j$ naturally gives rise to a Young subgroup of $S_r$. We would like to associate a subgroup of $S_k$ to it. We do this by first extending $\bm j$ to a $k$-tuple $\bm{\tilde j} = (j(1),\ldots,j(r),\underbrace{1,\ldots,1}_{k-r \text{ times}}) \in [\ell]^k$ and then associating a Young subgroup to it. Concretely, to the $r$-tuple $\bm j$ we associate the following Young subgroup of $S_k$: let $\bm{\tilde j} = (j(1),\ldots,j(r),\underbrace{1,\ldots,1}_{k-r \text{ times}}) \in [\ell]^k$ and define\footnote{The choice of $1$'s on the last $k-r$ coordinates might seem arbitrary, but it is motivated by the fact that $\nu_1 = (d)$ and $S^{(d)}$ is the trivial representation of $S_d$.} 
\begin{align}\label{eq: Sj}
S_{\bm j} := S_{\bm{\tilde j}} = S_{ \bm j^{-1}(1) \cup \{k-r+1,\ldots,k\}} \times S_{\bm j^{-1}(2)} \times \cdots \times S_{\bm j^{-1}(\ell)}.
\end{align}
We define partitions $\gamma^a$ for $a \in [\ell]$ as
\begin{align} \label{eq: gamma}
\gamma^a = \begin{cases}
(k-r,1^{|\bm j^{-1}(1)|}) \quad\quad\quad&\mbox{if } a = 1, \\
(1^{|\bm j^{-1}(a)|})  &\mbox{if } a \geq 2. 
\end{cases}
\end{align}
We next consider the permutation modules $M^{\gamma^a}$. For $a \geq 2$ we index the rows of tabloids in $M^{\gamma^a}$ with $\bm j^{-1}(a)$ and we let these tabloids have content $\bm j^{-1}(a)$. For $M^{\gamma^1}$ we index the last $|\bm j^{-1}(1)|$ rows using $\bm j^{-1}(1)$ and we let tabloids in $M^{\gamma^1}$ have content $\bm{\tilde j}^{-1}(1)$. 
We then define the $S_d \wr S_{\bm j}$-module 
\[
(\sigma_1 \cdot S^{\nu_{j(1)}} \otimes \ldots \otimes \sigma_r \cdot S^{\nu_{j(r)}}) \otimes (M^{\gamma^1} \otimes \ldots \otimes M^{\gamma^\ell}) 
\]
where the action of $(\bm \alpha,\beta) \in S_d \wr S_{\bm j}$ is defined on a standard basis vector 
$(v_1,\ldots, v_r;w_1,\ldots, w_{\ell})$ as follows
\[
(\bm \alpha,\beta) (v_1,\ldots, v_r;w_1,\ldots, w_{\ell}) = (\alpha_{\beta w (1)} v_1,\ldots, \alpha_{\beta w(r)} v_r;\beta w_1,\ldots, \beta w_{\ell})
\]
where $w:[r]\to [k]$ is such that the row of $w_1,\ldots, w_\ell$ that is indexed with $i \in [r]$ has content $w(i)$.
This $S_d \wr S_{\bm j}$-module is naturally isomorphic to a tensor product of an $S_d \wr S_{\bm{\tilde j}^{-1}(1)}$-module and $S_d \wr S_{\bm j^{-1}(a)}$-modules (for $a \geq 2$):
\begin{equation} \label{eq: reordering the coordinates}
(\sigma_1 \cdot S^{\nu_{j(1)}} \otimes \ldots \otimes \sigma_r \cdot S^{\nu_{j(r)}}) \otimes (M^{\gamma^1} \otimes \ldots \otimes M^{\gamma^{\ell}}) \cong \bigotimes_{a \in [\ell]} \left(\left(\bigotimes_{i \in \bm j^{-1}(a)} \sigma_i \cdot S^{\nu_a}\right) \otimes M^{\gamma^a}\right) 
\end{equation}

\begin{lemma} 
\label{lem: mu r to gammas} 
For $\bm j \in [\ell]^r$ and $\bm \sigma \in \bm T^d_{\bm j, \bm q}$, as $S_d \wr S_k$-modules, we have 
\[
W_{\bm \sigma} \cong \left[ (\sigma_1 \cdot S^{\nu_{j(1)}} \otimes \ldots \otimes \sigma_r \cdot S^{\nu_{j(r)}}) \otimes (M^{\gamma^1} \otimes \ldots \otimes M^{\gamma^{\ell}}) \right]\left.\vphantom{\big]}\right\uparrow_{d \wr \bm{j}}^{d \wr k} =: W_{\bm \sigma}'
\]
\end{lemma}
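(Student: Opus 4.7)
The plan is to apply Frobenius reciprocity: to construct an $S_d \wr S_k$-homomorphism $\psi : W_{\bm \sigma}' \to W_{\bm \sigma}$, it suffices to exhibit an $S_d \wr S_{\bm j}$-homomorphism
\[
\tilde \psi : N \longrightarrow W_{\bm \sigma}\big|_{S_d \wr S_{\bm j}}, \qquad N := (\sigma_1 \cdot S^{\nu_{j(1)}} \otimes \ldots \otimes \sigma_r \cdot S^{\nu_{j(r)}}) \otimes (M^{\gamma^1} \otimes \ldots \otimes M^{\gamma^\ell}),
\]
where $N$ is the module being induced to produce $W_{\bm \sigma}'$. Then $\psi$ acts by $\psi((\id,b),\mathbf u) := (\id,b)\cdot \tilde\psi(\mathbf u)$, and bijectivity will follow from a direct dimension count.

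To define $\tilde \psi$, I would observe that each tuple of tabloids $(w_1,\ldots,w_\ell)$ with $w_a \in M^{\gamma^a}$ amalgamates into a unique tabloid $w = w(w_1,\ldots,w_\ell)$ of shape $\mu_r^{(k)}$: for $i \in [r]$, the singleton row of $w_{j(i)}$ indexed by $i$ is placed into row $i$ of $w$, and the top row of $w$ (of length $k-r$) is inherited from the top row of $w_1$. A routine check of contents (using that $\bm{\tilde j}^{-1}(1) \setminus \bm j^{-1}(1) = \{r+1,\ldots,k\}$ and that the tail of each $w_a$ for $a\geq 2$ is a bijection $\bm j^{-1}(a) \to \bm j^{-1}(a)$) confirms that this assembly is well-defined and injective on basis tuples. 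Define $\tilde \psi(v_1,\ldots,v_r; w_1,\ldots,w_\ell) := (v_1,\ldots,v_r; w(w_1,\ldots,w_\ell))$ and extend linearly.

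The key verification is $S_d \wr S_{\bm j}$-equivariance. For $(\bm \alpha,\beta) \in S_d \wr S_{\bm j}$, acting on the input and then applying $\tilde \psi$ yields $(\alpha_{\beta w(1)} v_1,\ldots,\alpha_{\beta w(r)} v_r; w(\beta w_1,\ldots,\beta w_\ell))$, whereas applying $\tilde \psi$ first and then the action from \eqref{eq: action on W} gives $(\alpha_{\beta w(1)} v_1,\ldots,\alpha_{\beta w(r)} v_r; \beta \cdot w(w_1,\ldots,w_\ell))$. Agreement then reduces to the tabloid identity $w(\beta w_1,\ldots,\beta w_\ell) = \beta \cdot w(w_1,\ldots,w_\ell)$, which is immediate from the definitions, since the amalgamation simply collects row-by-row contents and both sides replace each such content by its $\beta$-image.

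Finally, I would conclude by verifying that the induced map $\psi$ is a bijection via dimensions. Multiplying $\dim(M^{\gamma^1}) = (k-r+|\bm j^{-1}(1)|)!/(k-r)!$ and $\dim(M^{\gamma^a}) = |\bm j^{-1}(a)|!$ (for $a \geq 2$) by the coset index $k!/[(k-r+|\bm j^{-1}(1)|)! \prod_{a\geq 2}|\bm j^{-1}(a)|!]$ telescopes to $k!/(k-r)! = \dim(M^{\mu_r^{(k)}})$, while the Specht-module factors $\sigma_i \cdot S^{\nu_{j(i)}}$ contribute the same dimension on both sides. The main obstacle I anticipate is purely combinatorial bookkeeping: the special role played by the index $a=1$ (which carries the extra $k-r$ top-row entries of $\gamma^1$) and the interaction of the chosen coset representatives $(\id,b)$ with the $\beta$-action on tabloids through the assembly function $w(\cdot)$, so that all conventions in \cref{eq: gamma} and \cref{eq: reordering the coordinates} line up with \cref{eq: action on W}.
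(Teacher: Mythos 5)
Your proposal is essentially correct in outline, and it takes a genuinely different route from the paper. The paper constructs the map directly in the other direction, $\phi\colon W_{\bm \sigma} \to W_{\bm \sigma}'$, sending a basis element $(v_1,\ldots,v_r;w)$ to $((\id,b),(v_1,\ldots,v_r;w_1,\ldots,w_\ell))$ with $b$ and the $w_a$ determined by $w(i)=b(w_{j(i)}(i))$, and then it verifies $S_d\wr S_k$-equivariance head-on, cosets and all. You instead build the inverse map $\psi\colon W_{\bm\sigma}'\to W_{\bm\sigma}$ via the universal property of induction, so you only have to check equivariance for the smaller group $S_d\wr S_{\bm j}$; the coset bookkeeping then comes for free. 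The amalgamation $\omega\colon(w_1,\ldots,w_\ell)\mapsto w$ you define is exactly the paper's correspondence read in reverse (restricted to $b=\id$), and your $\psi((\id,b),\mathbf u)=(\id,b)\cdot\tilde\psi(\mathbf u)$ recovers the paper's bijection after a short calculation. The equivariance check and the telescoping dimension count are both correct.

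There is, however, a genuine gap in the bijectivity step. You establish that $\omega$ (and hence $\tilde\psi$) is injective, and you compute that $\dim W_{\bm\sigma}'=\dim W_{\bm\sigma}$, but these two facts alone do not yield bijectivity of the induced map $\psi$. In general, an injective $H$-map $\tilde\psi\colon N\to \mathrm{Res}^G_H M$ need not induce an injective $G$-map $\mathrm{Ind}^G_H N\to M$ even when the dimensions match (e.g.\ take $H$ trivial in $G=\Z/2$, $N=\C$, $M=\C^2$ with trivial $G$-action, and $\tilde\psi(1)=(1,0)$: the induced map has one-dimensional image). What is missing is that the map $(b,(w_1,\ldots,w_\ell))\mapsto b\cdot\omega(w_1,\ldots,w_\ell)$ is injective, i.e.\ that the coset representative $b$ can be recovered from the resulting tabloid $w^*$. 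This does hold here, and the reason is hiding in a remark you already made: for $a\ge 2$ each $w_a$ is a bijection $\bm j^{-1}(a)\to\bm j^{-1}(a)$, so the block $b(\bm j^{-1}(a))$ equals $\{w^*(i):i\in\bm j^{-1}(a)\}$ and is visible in $w^*$, which pins down $b$ (and then the $w_a$ by pulling back along $b^{-1}$). Alternatively, you could argue surjectivity by noting that $\tilde\psi(N)$ generates $W_{\bm\sigma}$ as an $S_d\wr S_k$-module, since $S_k$ acts transitively on tabloids of a given shape. Either way, this is a small and fillable gap, but as written the ``bijectivity via dimensions'' claim is not a complete argument. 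Note that the paper's proof also handles this step tersely (it simply asserts that $b$ and the $w_a$ are ``uniquely defined'' by the defining condition), but that assertion is precisely the injectivity you still need.
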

\begin{proof} We construct an explicit $S_d \wr S_k$-isomorphism $\phi: W_{ \bm \sigma} \to W_{ \bm \sigma}'$.
For each standard basis element~$(v_1,\ldots,v_r; w) \in W_{ \bm \sigma}$ we construct an element
$$
\phi(v_1,\ldots,v_r; w) = ((\id, b), (v_1, \ldots, v_r ; w_1, \ldots,  w_\ell)),
$$
where~$(\id,b)$ is a coset representative of~$S_d \wr S_{\bm j} $ in $S_d \wr S_k$. As coset representatives we take the elements~$(\id, b)$ for which $b \in S_k$ is such that $b(i)>b(i')$ whenever $i>i'$ and either $i,i' \in \bm{\tilde j}^{-1}(1)$ or there exists an $a \in [\ell]_{\geq 2}$ such that $i,i' \in \bm{j}^{-1}(a)$. The coset representative $b$ and $\ell$-tuple of tabloids $( w_1,\ldots, w_{\ell})$ where $ w_a \in M^{\gamma^a}$ ($a \in [\ell]$) are uniquely defined by the condition 
 \begin{equation} \label{eq: def coset elements}
 w(i) = b(w_{j(i)}(i)) \qquad \text{ for } i \in [r].
 \end{equation}
 This defines a bijection between the basis elements of $W_{ \bm \sigma}$ and those of $W_{ \bm \sigma}'$, and we extend $\phi$ linearly to all of $W_{ \bm \sigma}$.  It remains to show that~$\phi$ respects the group action. 
 
 Let $(\bm \alpha, \beta) \in S_d \wr S_k$ and let $(v_1,\ldots,v_r; w) \in W_{ \bm \sigma}$. Let $(\id,\tilde b)$ be the coset representative for which $(\bm \alpha,\beta) (\id,b) \in (\id,\tilde b) \cdot S_d \wr S_{\bm{j}}$ and let $(\bm{\tilde \alpha},\tilde \beta)$ be such that
\begin{equation*} 
(\bm \alpha,\beta) (\id,b) = (\id,\tilde b) (\bm{\tilde \alpha}, \tilde \beta).
\end{equation*}
Note that $\bm{\tilde \alpha}$, $\tilde \beta$ and $\tilde b$ are such that $\tilde \beta = \tilde b^{-1} \beta b \in S_{\bm{j}}$ and $\alpha_i = \tilde \alpha_{\tilde b^{-1}(i)}$ for all $i \in [k]$. In particular, $\tilde \alpha_i = \alpha_{\tilde b(i)}$ for all $i \in [k]$. Then we have
\begin{align*}
    (\bm \alpha,\beta) \phi(v_1,\ldots,v_r; w) &=  (\bm \alpha,\beta) \big((\id, b), (v_1, \ldots, v_r ; w_1, \ldots, w_\ell)\big)\\
    &= \big((\id,\tilde b),(\bm{\tilde \alpha}, \tilde \beta) (v_1, \ldots, v_r ; w_1, \ldots, w_\ell)\big) \\
    &= \big((\id,\tilde b),(\tilde \alpha_{\tilde \beta u(1)} v_1, \ldots, \tilde \alpha_{\tilde \beta u(r)} v_r ; \tilde \beta w_1, \ldots, \tilde \beta w_\ell)\big) \\
    &= \big((\id,\tilde b),  (\alpha_{\tilde b \tilde \beta u(1)} v_1, \ldots, \alpha_{\tilde b \tilde \beta u(r)} v_r ; \tilde \beta w_1, \ldots, \tilde \beta w_\ell) \big) \\
    &= \big((\id,\tilde b), (\alpha_{\beta b u(1)} v_1, \ldots, \alpha_{\beta  b u(r)} v_r ; \tilde \beta w_1, \ldots, \tilde \beta w_\ell)\big)
\end{align*}
where $u:[r]\to [k]$ is the function such that the row of $w_1,\ldots, w_\ell$ that is indexed with $i \in [r]$ has content $u(i)$. On the other hand, we have 
\begin{align*}
    \phi((\bm \alpha, \beta)(v_1,\ldots,v_r; w)) &= \phi(\alpha_{\beta w(1)} v_1,\ldots, \alpha_{\beta w(r)} v_r; \beta w) \\
    &= \big((\id,\tilde b), (\alpha_{\beta w(1)} v_1,\ldots, \alpha_{\beta w(r)} v_r; \tilde \beta w_1,\ldots, \tilde \beta w_\ell) \big) \\
    &= \big((\id,\tilde b), (\alpha_{\beta bu(1)} v_1,\ldots, \alpha_{\beta bu(r)} v_r; \tilde \beta w_1,\ldots, \tilde \beta w_\ell) \big) 
\end{align*}
where for the second to last step we use that
$\beta w(i) = \beta b(w_{j(i)}(i)) = \tilde b \tilde \beta (w_{j(i)}(i))$ for $i \in [r]$,
and for the last step we use \cref{eq: def coset elements}. \end{proof}

We then obtain our final decomposition by decomposing the permutation modules $M^{\gamma^a}$ as in \cref{eq: decomp permutation module} to obtain $S_d \wr S_k$-modules $W_{\bm \sigma, \bm \tau}$: let $\underline \Lambda$ be an $\ell$-multipartition of $|\underline \gamma|$ and define 
\begin{align} \label{eq: Tklg}
\bm{T}^k_{\underline \Lambda,\underline \gamma} := \bigtimes_{a \in [\ell]} T_{\Lambda^a,\gamma^a}. 
\end{align}
For each $\bm \sigma \in \bm T^d_{\bm j, \bm q}$ and $\bm \tau \in \bm T^k_{\underline \Lambda,\underline \gamma}$ we define an $S_d \wr S_k$-submodule of $W_{\bm \sigma}'$:
\[
W_{\bm \sigma,\bm \tau} := \left[ (\sigma_1 \cdot S^{\nu_{j(1)}} \otimes \ldots \otimes \sigma_r \cdot S^{\nu_{j(r)}}) \otimes (\tau_1 \cdot S^{\Lambda^1} \otimes \ldots \otimes \tau_\ell \cdot S^{\Lambda^\ell}) \right]\left.\vphantom{\Big]}\right\uparrow_{d \wr \bm{j}}^{d \wr k}
\]
\begin{lemma} \label{lem: second decomposition of W}
We have $\displaystyle{W_{\bm \sigma}' = \bigoplus_{\underline \Lambda \vdash |\underline \gamma|} \bigoplus_{\bm \tau \in \bm T^k_{\underline \Lambda,\underline \gamma}} W_{\bm \sigma,\bm \tau}}$.
\end{lemma}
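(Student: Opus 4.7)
The plan is to decompose $W_{\bm\sigma}'$ by expanding each of the permutation modules $M^{\gamma^a}$ into Specht modules and then pushing the resulting direct sum through the tensor product and the induction functor.

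First I would apply \cref{eq: decomp permutation module} to each of the permutation modules $M^{\gamma^a}$ for $a \in [\ell]$, obtaining the $S_{\bm j^{-1}(a)}$-module decomposition (respectively $S_{\bm{\tilde j}^{-1}(1)}$-module decomposition for $a=1$)
\[
M^{\gamma^a} \;=\; \bigoplus_{\Lambda^a \,\vdash\, |\gamma^a|} \, \bigoplus_{\tau_a \in T_{\Lambda^a,\gamma^a}} \tau_a \cdot S^{\Lambda^a}.
\]
Next I would use that the tensor product of vector spaces distributes over direct sums, so that
\[
M^{\gamma^1} \otimes \cdots \otimes M^{\gamma^\ell} \;=\; \bigoplus_{\underline\Lambda \,\vdash\, |\underline\gamma|}\, \bigoplus_{\bm\tau \in \bm T^k_{\underline\Lambda,\underline\gamma}} \bigl(\tau_1 \cdot S^{\Lambda^1} \otimes \cdots \otimes \tau_\ell \cdot S^{\Lambda^\ell}\bigr),
\]
and that this is in fact a decomposition of $S_d \wr S_{\bm j}$-modules, since the action of $S_d \wr S_{\bm j}$ on $M^{\gamma^1} \otimes \cdots \otimes M^{\gamma^\ell}$ factors through the projection to $S_{\bm{\tilde j}^{-1}(1)} \times S_{\bm j^{-1}(2)} \times \cdots \times S_{\bm j^{-1}(\ell)}$ acting coordinate-wise on the tensor factors (and each summand on the right is invariant under this product group).

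Then I would tensor with the fixed $S_d \wr S_{\bm j}$-module $\sigma_1 \cdot S^{\nu_{j(1)}} \otimes \cdots \otimes \sigma_r \cdot S^{\nu_{j(r)}}$ on the left. Since tensoring with a fixed module over $\C$ preserves direct sums of modules, this gives an $S_d \wr S_{\bm j}$-module decomposition of the bracketed expression in \cref{lem: mu r to gammas}. Finally, applying the induction functor $(-)\!\uparrow_{d \wr \bm j}^{d \wr k}$, which is exact (indeed, it is given by $\C[S_d \wr S_k] \otimes_{\C[S_d \wr S_{\bm j}]} -$), commutes with direct sums. Applying it term by term yields exactly $\bigoplus_{\underline\Lambda,\bm\tau} W_{\bm\sigma,\bm\tau}$.

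The only mild subtlety is the bookkeeping at the first step: one must verify that the $S_{\bm j^{-1}(a)}$-action on $M^{\gamma^a}$ used to define the decomposition \eqref{eq: decomp permutation module} matches the action encoded in the definition of $W_{\bm\sigma}'$ (in particular, for $a=1$ one works with content $\bm{\tilde j}^{-1}(1)$ rather than $\bm j^{-1}(1)$, which is why the partition $\gamma^1 = (k-r,1^{|\bm j^{-1}(1)|})$ has the extra first row). Once this is noted, the decomposition is immediate and no further obstacles remain.
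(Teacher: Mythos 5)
Your proof follows the same route as the paper's terse one-sentence argument: apply \cref{eq: decomp permutation module} to each $M^{\gamma^a}$ and propagate the resulting direct sum through the tensor product and the exact induction functor. There is, however, a real subtlety in the step ``tensoring with the fixed $S_d\wr S_{\bm j}$-module $\sigma_1\cdot S^{\nu_{j(1)}}\otimes\cdots\otimes\sigma_r\cdot S^{\nu_{j(r)}}$ on the left \dots preserves direct sums.'' The $S_d\wr S_{\bm j}$-action on the bracketed expression of \cref{lem: mu r to gammas} is \emph{not} the tensor product of an action on the $v$-part with an action on the $w$-part: in the formula
\[
(\bm\alpha,\beta)(v_1,\ldots,v_r;w_1,\ldots,w_\ell)=(\alpha_{\beta w(1)}v_1,\ldots,\alpha_{\beta w(r)}v_r;\beta w_1,\ldots,\beta w_\ell),
\]
the permutation $\alpha_{\beta w(i)}$ that is applied to $v_i$ depends on the tabloid $w$, so the $v$-part is genuinely entangled with the $w$-part. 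Consequently the bracketed expression is not a tensor product of $S_d\wr S_{\bm j}$-modules, and the general fact that tensoring with a fixed module preserves direct sums of modules cannot be invoked here.

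What actually needs to be checked is that each subspace $(\sigma_1\cdot S^{\nu_{j(1)}}\otimes\cdots)\otimes(\tau_1\cdot S^{\Lambda^1}\otimes\cdots\otimes\tau_\ell\cdot S^{\Lambda^\ell})$ is $S_d\wr S_{\bm j}$-invariant, and this is not automatic; in fact the naive tensor product can fail to be invariant once $\dim S^{\nu_a}>1$. For instance, with $|\bm j^{-1}(a)|=2$, $\sigma_1=\sigma_2=\sigma$, $\Lambda^a=(2)$, and $w,w'$ the two tabloids of shape $(1,1)$, acting by $(\alpha_1,\alpha_2;\id)$ with $\alpha_1=\id$ on $\sigma v\otimes\sigma v\otimes(w+w')$ gives $\sigma v\otimes\sigma\alpha_2 v\otimes w+\sigma\alpha_2 v\otimes\sigma v\otimes w'$, which is not of the form $u\otimes(w+w')$ when $v$ and $\alpha_2 v$ are independent. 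The paper's proof glosses over the same point. The direct summands one really wants are the preimages, under the isomorphism $\phi$ of \cref{lem: first step towards Specht module}, of the Specht summands of $(S^{\nu_a})^{\tilde\boxtimes|\bm j^{-1}(a)|}\oslash M^{\gamma^a}$; those do form a module decomposition and agree with the naive tensor products on the canonical generating element (which is all the representative-set construction in \cref{sec: explicit decomposition} uses), but they do not coincide with the naive tensor products as subspaces. So the lemma needs either this reformulation or a direct invariance check, rather than the ``tensor with a fixed module'' shortcut.
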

\begin{proof}
This follows by applying \cref{eq: decomp permutation module} to each of the modules $M^{\gamma^a}$. 
\end{proof}
Combining \cref{lem: first decomposition of W,lem: second decomposition of W} this shows the following:
\begin{corollary} \label{lem: decomp of VPQ}
We have $\displaystyle{W \cong \bigoplus_{\bm j \in [\ell]^r} \bigoplus_{\bm \sigma \in \bm T^d_{\bm j,\bm q}} \bigoplus_{\underline \Lambda \vdash |\underline \gamma|} \bigoplus_{\bm \tau \in \bm T^k_{\underline \Lambda,\underline \gamma}} W_{\bm \sigma,\bm \tau}}$.
\end{corollary}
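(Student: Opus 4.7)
The plan is to chain together the three decomposition results already established in this subsection, which gives the displayed isomorphism essentially by substitution. No new isomorphism needs to be constructed.

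First, I would invoke \cref{lem: first decomposition of W} to obtain the outer decomposition
\[
W \;=\; \bigoplus_{\bm j \in [\ell]^r} \bigoplus_{\bm \sigma \in \bm T^d_{\bm j,\bm q}} W_{\bm \sigma},
\]
which holds as $S_d \wr S_k$-modules since the action on $W$ leaves the first $r$ tensor legs fixed (cf.~\cref{rem: our W is nice}) and the decomposition from \cref{eq: decomp permutation module} is applied coordinate-wise in these legs. Each summand $W_{\bm \sigma}$ inherits its $S_d \wr S_k$-module structure from $W$.

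Next, I would apply \cref{lem: mu r to gammas} to each summand $W_{\bm \sigma}$, replacing it by the induced module $W_{\bm \sigma}'$; this is an $S_d \wr S_k$-isomorphism, and the isomorphism $\phi$ constructed there is natural in $\bm \sigma$, so it glues across the direct sum. This yields
\[
W \;\cong\; \bigoplus_{\bm j \in [\ell]^r} \bigoplus_{\bm \sigma \in \bm T^d_{\bm j,\bm q}} W_{\bm \sigma}'.
\]
Finally, I would invoke \cref{lem: second decomposition of W} to decompose each $W_{\bm \sigma}'$ further as
\[
W_{\bm \sigma}' \;=\; \bigoplus_{\underline \Lambda \vdash |\underline \gamma|} \bigoplus_{\bm \tau \in \bm T^k_{\underline \Lambda,\underline \gamma}} W_{\bm \sigma,\bm \tau}.
\]
Substituting this into the previous display yields the claimed four-fold direct sum decomposition.

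There is no real obstacle here: this step is intended as bookkeeping that packages the two main decomposition lemmas together for later use. The only minor subtlety worth noting explicitly is that in the middle step one must be careful that induction commutes with direct sums (which it does, since induction is an exact additive functor), so that decomposing the module $(\sigma_1 \cdot S^{\nu_{j(1)}} \otimes \cdots \otimes \sigma_r \cdot S^{\nu_{j(r)}}) \otimes (M^{\gamma^1} \otimes \cdots \otimes M^{\gamma^\ell})$ on $S_d \wr S_{\bm j}$ before inducing up to $S_d \wr S_k$ gives exactly the direct sum of the $W_{\bm \sigma, \bm \tau}$. This is precisely what \cref{lem: second decomposition of W} records, so the corollary follows immediately.
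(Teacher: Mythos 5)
Your proposal is correct and matches the paper's approach exactly: the corollary is stated in the paper immediately after \cref{lem: second decomposition of W} with the remark that it follows by combining \cref{lem: first decomposition of W,lem: second decomposition of W} (with \cref{lem: mu r to gammas} bridging $W_{\bm\sigma}$ and $W_{\bm\sigma}'$). Your added observation that induction commutes with direct sums is a fine clarification but is already built into \cref{lem: second decomposition of W} as stated, so nothing new is needed.
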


In the next section we provide the necessary lemmas that allow us to show this is in fact a decomposition of $W$ into \emph{irreducible} submodules. 
\begin{proposition} \label{prop: W sigma tau Specht}
For $\bm j\in [\ell]^r$, $\bm \sigma \in \bm T^d_{\bm j, \bm q}$, $\bm \tau \in \bm T^k_{\underline \Lambda, \underline \gamma}$, we have $\displaystyle{W_{\bm \sigma,\bm \tau} \cong S^{\underline \Lambda}_{\bm j} \cong S^{\underline \Lambda}}$ as $S_d \wr S_k$-modules.
\end{proposition}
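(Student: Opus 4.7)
The plan is to construct, from the Specht-module identifications recalled in \cref{sec:reprset sk}, an explicit $S_d \wr S_{\bm{j}}$-isomorphism $\Psi$ between the bracketed module defining $W_{\bm \sigma, \bm \tau}$ and the bracketed module defining $S^{\underline \Lambda}_{\bm j}$. By functoriality of induction, $\Psi$ then extends to an $S_d \wr S_k$-isomorphism $W_{\bm \sigma,\bm \tau} \cong S^{\underline \Lambda}_{\bm j}$, and \cref{lem: with equals without j} gives the second isomorphism $S^{\underline \Lambda}_{\bm j} \cong S^{\underline \Lambda}$. Here I implicitly identify $\bm j \in [\ell]^r$ with its ``inflation'' $\bm{\tilde j} \in [\ell]^k$ obtained by appending $k-r$ copies of $1$; one checks from \eqref{eq: gamma} that $|\bm{\tilde j}^{-1}(a)| = |\Lambda^a|$, so that $S^{\underline \Lambda}_{\bm{\tilde j}}$ is well defined and \cref{lem: with equals without j} applies.

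The candidate $\Psi$ is assembled from the standard $S_d$-module isomorphism $\sigma_i \cdot S^{\nu_{j(i)}} \cong S^{\nu_{j(i)}}$ that sends the generator $\sigma_i \cdot e_{t_{\nu_{j(i)}}}$ to $e_{t_{\nu_{j(i)}}}$ (together with the analogous isomorphism $\tau_a \cdot S^{\Lambda^a} \cong S^{\Lambda^a}$ for each $a \in [\ell]$). Tensoring the first $r$ factors together and padding with $k-r$ trivial factors $S^{(d)}$ at the positions corresponding to the ``long row'' of the L-shape (where $\tilde j(i) = 1$), one recovers the vector space $(S^{\nu_1},\ldots,S^{\nu_\ell})^{\tilde \boxtimes |\underline \Lambda|}_{\bm{\tilde j}}$; tensoring the remaining $\ell$ isomorphisms gives the corresponding $(S^{\Lambda^1}\boxtimes \cdots \boxtimes S^{\Lambda^\ell})$ part. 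This defines $\Psi$ as a linear bijection onto the pre-induction module of $S^{\underline \Lambda}_{\bm{\tilde j}}$.

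The main obstacle is verifying $S_d \wr S_{\bm j}$-equivariance of $\Psi$. As highlighted in \cref{rem: our W is nice}, the action on $W_{\bm \sigma, \bm \tau}$ (inherited from $W$ via \cref{lem: mu r to gammas}) keeps the tensor legs fixed and uses the tabloid labels $w(i)$ to determine which $\alpha_{\beta w(i)}$ acts on which factor, whereas the wreath-product action \eqref{eq: wreath action on boxtimes} on the target permutes the tensor legs directly. The reconciliation rests on the fact that $S_{\bm{\tilde j}}$ only permutes within blocks where all $S^{\nu}$-factors are identical, so ``reassigning labels'' and ``permuting tensor legs'' define canonically isomorphic actions; meanwhile the $k-r$ trivial padding factors $S^{(d)}$ carry no structure and are preserved under any such permutation. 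Once this identification is made (by a bookkeeping argument essentially parallel to that in the proof of \cref{lem: mu r to gammas}), $\Psi$ is equivariant, and inducing up from $d \wr \bm j$ to $d \wr k$ followed by \cref{lem: with equals without j} yields the proposition.
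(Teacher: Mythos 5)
Your high-level plan matches the paper's: work with the pre-induction $S_d \wr S_{\bm j}$-modules, show those are isomorphic, induce up to $d \wr k$, and finish by \cref{lem: with equals without j}. But the concrete map $\Psi$ you construct — the tensor product of the ``drop-$\sigma_i$'' isomorphisms $\sigma_i \cdot S^{\nu_{j(i)}} \cong S^{\nu_{j(i)}}$ applied leg-by-leg, padded with trivial factors, and combined with the ``drop-$\tau_a$'' isomorphisms — is \emph{not} $S_d \wr S_{\bm j}$-equivariant, and this is precisely where the real work lives.

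To see the failure, strip down to the simplest instance, with a single $\nu$ and all $\sigma_i = \sigma$ equal. On the source (cf.~\eqref{eq: action on W}) the group acts by
\begin{align*}
(\bm\alpha,\beta)(\sigma\cdot v_1,\ldots,\sigma\cdot v_k;w) = (\sigma\cdot\alpha_{\beta w(1)}v_1,\ldots,\sigma\cdot\alpha_{\beta w(k)}v_k;\beta w),
\end{align*}
so applying your leg-wise map afterwards yields $(\alpha_{\beta w(1)}v_1,\ldots,\alpha_{\beta w(k)}v_k;\beta w)$. On the target, using~\eqref{eq: wreath action on boxtimes}, acting first with $(\bm\alpha,\beta)$ gives $(\alpha_1 v_{\beta^{-1}(1)},\ldots,\alpha_k v_{\beta^{-1}(k)};\beta w)$. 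These disagree: the source action leaves tensor legs fixed and routes $\alpha_{\beta w(i)}$ to leg $i$, whereas the target action permutes legs. Your naive $\Psi$ does not reconcile them. The correct isomorphism — Lemma~\ref{lem: first step towards Specht module} in the paper — necessarily re-indexes the tensor legs via the tabloid: $\phi(\sigma_1\cdot v_1,\ldots,\sigma_k\cdot v_k;w) = (v_{w^{-1}(1)},\ldots,v_{w^{-1}(k)};w)$. The $w^{-1}$-twist is the essential ingredient; without it the map on generators does not extend equivariantly.

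You correctly observe that ``reassigning labels'' and ``permuting tensor legs'' yield isomorphic module structures, and that the padding factors $S^{(d)}$ are harmless. But asserting that once ``this identification is made'' $\Psi$ becomes equivariant is circular: the identification \emph{is} the non-trivial isomorphism (Lemma~\ref{lem: first step towards Specht module}), not a bookkeeping relabeling, and the $\Psi$ you define does not realize it. The analogy to \cref{lem: mu r to gammas} is also somewhat misleading — that lemma concerns splitting a single permutation module into an induced product and picking coset representatives, whereas the obstacle here is the twist between fixed-leg and permuting-leg actions inside a single induction block. Finally, you omit the step of restricting the isomorphism to the Specht submodules (\cref{lem: from stationary to permuting,lem: from stationary to permuting 2}) and the separate treatment of the $a=1$ block with $\nu_1=(d)$ and the L-shaped row (\cref{lem: the case nu=(d)}); these are needed to pass from the permutation-module level to $W_{\bm\sigma,\bm\tau} \cong S^{\underline\Lambda}_{\bm j}$.
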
 
\begin{proof}
Recall that in \cref{lem: with equals without j} we have already shown that $S^{\underline \Lambda}_{\bm j}$ is $S_d \wr S_k$-isomorphic to $S^{\underline \Lambda}$. It thus remains to show that $W_{\bm \sigma,\bm \tau}$ is $S_d \wr S_k$-isomorphic to $S^{\underline \Lambda}_{\bm j}$. Since induction preserves isomorphism, it suffices to show that the respective $S_d\wr S_{\bm j}$-modules are $S_d \wr S_{\bm j}$-isomorphic. That is, after grouping on both sides the $\bm j^{-1}(a)$ coordinates (cf.~\eqref{eq: reordering the coordinates}), it suffices to show
\[
\bigotimes_{a \in [\ell]} \left(\left(\bigotimes_{i \in \bm j^{-1}(a)} \sigma_i \cdot S^{\nu_a}\right) \otimes \tau_a \cdot S^{\Lambda^a}\right) \cong \bigotimes_{a \in [\ell]}\left( \left(S^{\nu_a}\right)^{\tilde \boxtimes |\gamma^a|} \oslash  S^{\Lambda^a}\right)
\]
as $S_d \wr S_{\bm j}$-modules. We do so separately for each $a \in [\ell]$. For $a \geq 2$, we have that $\tau_a \cdot S^{\Lambda^a} \subseteq M^{(1^{|\gamma^a|})}$ and the $S_d \wr S_{\bm j^{-1}(a)}$-isomorphism follows from \cref{lem: from stationary to permuting,lem: from stationary to permuting 2}. For $a=1$ and thus $\nu_1 = (d)$, we have $\tau_1 \cdot S^{\Lambda^1} \subseteq M^{(k-r,1^{|\bm j^{-1}(1)|})}$ and the $S_d \wr S_{\bm j}$-isomorphism follows from \cref{lem: the case nu=(d)}.
\end{proof}

To obtain a representative set for the action of $S_d \wr S_k$ on $W$ it will be useful to group the terms in the decomposition of $W$ from \cref{lem: decomp of VPQ}  per Specht module $S^{\underline \Lambda}$. That is, we rearrange the direct sum as follows
\[
W \cong 
\bigoplus_{\underline \Lambda \vdash k} \left(\bigoplus_{\substack{\bm j \in [\ell]^r: \\
|\bm j^{-1}(a)| = |\Lambda^a|  \text{ for } a \geq 2}}
\bigoplus_{\bm \tau \in \bm T^k_{\underline \Lambda,\underline \gamma}} \bigoplus_{\bm \sigma \in \bm T^d_{\bm j,\bm q}}   W_{\bm \sigma,\bm \tau}\right).
\]

\subsection{\texorpdfstring{$W_{\bm \sigma,\bm \tau}$}{W sigma tau} is isomorphic to a Specht module} \label{sec: iso to Specht modules}

We first consider the setting where all $S_d$-modules are isomorphic to $S^\nu$ for a single $\nu \vdash d$ and on the last coordinate we have the module $M^{(1^k)}$. That is, for $\sigma_i \in T_{\nu,\mu_i}$ (for arbitrary $\mu_i \vdash d$) we consider the $S_d \wr S_k$-module 
\[
M_1 = \left(\bigotimes_{i \in [k]} \sigma_i \cdot S^\nu\right) \otimes M^{(1^k)}
\]
with the action from \eqref{eq: action on W}. We show $M_1$ is isomorphic to the $S_d \wr S_k$-module $M_2 = (S^\nu)^{\tilde \boxtimes r} \oslash M^{(1^k)}$.
\begin{lemma} \label{lem: first step towards Specht module}
As $S_d \wr S_k$-modules we have $M_1 \cong M_2$.
\end{lemma}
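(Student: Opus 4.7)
The idea is to construct a natural re-indexing isomorphism $\phi \colon M_1 \to M_2$. A tabloid $w \in M^{(1^k)}$ is simply a bijection $w \colon [k] \to [k]$, and in $M_1$ the $i$-th tensor leg $v_i$ is ``labelled by'' the basis $w(i)$ via the action \eqref{eq: action on W} (cf.\ \cref{rem: our W is nice}), whereas in $M_2$ the $a$-th tensor leg is directly labelled by $a$ in the sense of \eqref{eq: wreath action on boxtimes}. Hence the right move is to transport the old entry at position $i$ to the new position $w(i)$; equivalently, the new $a$-th leg should be read from the old position $w^{-1}(a)$.

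To set this up, note that for each $i \in [k]$ the map $v \mapsto \sigma_i \cdot v$ is an injective $S_d$-homomorphism $S^\nu \hookrightarrow M^{\mu_i}$ with image $\sigma_i \cdot S^\nu$; let $\psi_i \colon \sigma_i \cdot S^\nu \to S^\nu$ denote its inverse. I then define $\phi$ on standard basis elements of $M_1$ by
\begin{equation*}
\phi\big((v_1,\ldots,v_k;\,w)\big) \;=\; \big(\psi_{w^{-1}(1)}(v_{w^{-1}(1)}),\ldots,\psi_{w^{-1}(k)}(v_{w^{-1}(k)});\, w\big),
\end{equation*}
and extend linearly. Since each $\psi_i$ is a linear bijection onto $S^\nu$ and $w$ is a permutation of $[k]$, $\phi$ is manifestly a linear bijection, so the only substantive step is to verify $S_d \wr S_k$-equivariance.

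For that, fix $(\bm\alpha,\beta) \in S_d \wr S_k$ and a rank-one vector $(v_1,\ldots,v_k;w)$. Acting first on $M_1$ via \eqref{eq: action on W} produces the new tabloid $\beta w$ and new $i$-th leg $\alpha_{\beta w(i)} v_i$; applying $\phi$ afterwards, together with the identity $(\beta w)^{-1}(a) = w^{-1}\beta^{-1}(a)$ and the $S_d$-equivariance of each $\psi_i$, produces new $a$-th leg $\alpha_a\,\psi_{w^{-1}\beta^{-1}(a)}(v_{w^{-1}\beta^{-1}(a)})$. Acting instead first by $\phi$ and then by the $M_2$-action (which permutes tensor legs by $\beta^{-1}$ and multiplies the $a$-th leg by $\alpha_a$, while acting on the $M^{(1^k)}$-factor by $\beta$ alone) gives exactly the same expression. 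Hence $\phi$ intertwines the two actions, proving $M_1 \cong M_2$.

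The only mildly delicate aspect is the ``stationary vs.\ permuting'' distinction highlighted in \cref{rem: our W is nice}: the $M_1$-action fixes tensor positions but twists the $S_d$-labels through the tabloid, whereas the $M_2$-action literally permutes positions via $\beta^{-1}$. The bijection $\phi$ is precisely the dictionary between these two conventions, and once the re-indexing by $w^{-1}$ is written down the equivariance check is a routine tracking of indices. I do not anticipate any substantive obstacle beyond getting this bookkeeping right.
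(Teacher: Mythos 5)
Your proof is correct and follows essentially the same approach as the paper's: you define the same bijection $\phi$ (the paper writes a basis element of $M_1$ in the form $(\sigma_1\cdot v_1,\ldots,\sigma_k\cdot v_k;w)$ and sends it to $(v_{w^{-1}(1)},\ldots,v_{w^{-1}(k)};w)$, which is exactly your map once you observe that your $\psi_i$ is the inverse of $v\mapsto\sigma_i\cdot v$), and the equivariance check is the same bookkeeping with $(\beta w)^{-1}=w^{-1}\beta^{-1}$ and the $S_d$-equivariance of $\sigma_i$.
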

\begin{proof}
We construct an $S_d \wr S_k$-isomorphism $\phi: M_1 \to M_2$. As a standard basis of $M_1$ we consider the vectors of the form $(\sigma_1 \cdot v_1,\ldots, \sigma_k \cdot v_k; w) \in M_1$ where $v_1,\ldots, v_k$ are standard basis vectors of $S^\nu$ and $w \in M^{(1^k)}$ is a tabloid (this forms a basis since the $\sigma_i$ provide bijections between $S^\nu$ and $\sigma_i \cdot S^\nu$). For such a basis vector we define 
\[
\phi (\sigma_1 \cdot v_1,\ldots, \sigma_k \cdot v_k; w) = (v_{w^{-1}(1)},\ldots,v_{w^{-1}(k)}; w).
\]
This defines a bijection between the standard basis of $M_1$ and the standard basis of $M_2$. We extend~$\phi$ linearly to the vector space $M_1$. 
Recall that for $(\bm \alpha, \beta) \in S_d \wr S_k$ we have 
\begin{align*}
(\bm \alpha, \beta) (\sigma_1 \cdot v_1,\ldots, \sigma_k \cdot v_k;w) &= (\alpha_{\beta w(1)} \sigma_1 \cdot v_1,\ldots, \alpha_{\beta w(k)} \sigma_k \cdot v_k; \beta w) \\
&= (\sigma_1 \cdot \alpha_{\beta w(1)}  v_1,\ldots, \sigma_k \cdot\alpha_{\beta w(k)} v_k; \beta w), 
\end{align*}
and hence 
\begin{align*}
\phi\big((\bm \alpha, \beta) (\sigma_1 \cdot v_1,\ldots, \sigma_k \cdot v_k;w)\big) &= (\alpha_{\beta w((\beta w)^{-1}(1))} v_{(\beta w)^{-1}(1)},\ldots,\alpha_{\beta w((\beta w)^{-1}(k))}v_{(\beta w)^{-1}(k)}; \beta w ) \\
&= (\alpha_{1} v_{(\beta w)^{-1}(1)},\ldots,\alpha_{k}v_{(\beta w)^{-1}(k)}; \beta w ).
\end{align*}
On the other hand 
\begin{align*}
(\bm \alpha, \beta) \phi(\sigma_1 \cdot v_1,\ldots,\sigma_k \cdot v_k;w) &= (\bm \alpha,\beta) (v_{w^{-1}(1)},\ldots,v_{w^{-1}(k)}; w) \\
&= (\alpha_1 v_{w^{-1}\beta^{-1}(1)},\ldots,v_{w^{-1}\beta^{-1}(k)};\beta w).
\end{align*}
In other words, $\phi$ is $S_d \wr S_k$-equivariant. 
\end{proof}

We now restrict the isomorphism from the above lemma to an $S_d \wr S_k$-submodule to obtain:
\begin{lemma} \label{lem: from stationary to permuting}
For $i \in [k]$, let $\lambda_i \vdash k$ and $\sigma_i \in T_{\nu,\lambda_i}$. For $\Lambda \vdash k$ and $\tau \in T_{\Lambda,(1^k)}$, we have 
\[
\Big(\bigotimes_{i \in [k]} \sigma_i \cdot S^\nu\Big) \otimes \tau\cdot S^{\Lambda} \cong (S^\nu)^{\tilde \boxtimes k} \oslash \tau \cdot S^{\Lambda}.
\]
\end{lemma}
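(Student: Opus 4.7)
The plan is to restrict the $S_d \wr S_k$-equivariant isomorphism $\phi \colon M_1 \to M_2$ constructed in the previous lemma to an appropriate submodule. Since the current lemma essentially contrasts a ``stationary'' tensor product on the left with the ``permuting'' $\oslash$-construction on the right, the natural strategy is to obtain the desired isomorphism as a restriction of $\phi$ and thereby inherit equivariance from the preceding proof.

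The key steps are as follows. First I would check that $(S^\nu)^{\tilde \boxtimes k} \oslash \tau \cdot S^\Lambda$ is an $S_d \wr S_k$-submodule of $M_2 = (S^\nu)^{\tilde \boxtimes k} \oslash M^{(1^k)}$: from the defining formula $(\bm{\alpha}, \beta)(x, y) = ((\bm{\alpha}, \beta) x, \beta y)$ of the $\oslash$-action and the fact that $\tau \cdot S^\Lambda$ is an $S_k$-submodule of $M^{(1^k)}$, we have $\beta y \in \tau \cdot S^\Lambda$ whenever $y \in \tau \cdot S^\Lambda$, so closure is immediate. Second, since $\phi$ is a linear bijection intertwining the $S_d \wr S_k$-actions, the preimage $\phi^{-1}\bigl((S^\nu)^{\tilde \boxtimes k} \oslash \tau \cdot S^\Lambda\bigr) \subseteq M_1$ is an $S_d \wr S_k$-submodule, and $\phi$ restricts to an $S_d \wr S_k$-isomorphism of this preimage onto $(S^\nu)^{\tilde \boxtimes k} \oslash \tau \cdot S^\Lambda$. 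Third, using the explicit formula $\phi^{-1}(v_1, \ldots, v_k; w) = (\sigma_1 v_{w(1)}, \ldots, \sigma_k v_{w(k)}; w)$ deduced from the construction of $\phi$, I would write out a spanning set for this preimage and identify it with $\bigl(\bigotimes_{i \in [k]} \sigma_i \cdot S^\nu\bigr) \otimes \tau \cdot S^\Lambda$ as it appears in the lemma statement.

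The main obstacle is precisely this last identification: the preimage is spanned by sums of the form $\sum_w c_w (\sigma_1 v_{w(1)}, \ldots, \sigma_k v_{w(k)}; w)$ with $\sum_w c_w w \in \tau \cdot S^\Lambda$ and $v_1, \ldots, v_k \in S^\nu$, and recognizing this as the stationary tensor product on the left-hand side requires being careful about the implicit $S_d \wr S_k$-action that the lemma attaches to the LHS. Once this bookkeeping is done, the required isomorphism is just the restriction of $\phi$ and no further equivariance checking is needed.
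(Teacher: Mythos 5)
Your overall plan — restrict the isomorphism $\phi$ from the preceding lemma to an appropriate submodule and inherit equivariance — is exactly what the paper does, so the framing is right. However, your third step has a genuine gap that the acknowledged ``bookkeeping'' does not close. When you apply $\phi^{-1}$ to a simple tensor $(v_1,\ldots,v_k;y)$ of the right-hand side, with $y = \sum_w c_w w \in \tau\cdot S^\Lambda$, you obtain
\[
\sum_w c_w\,(\sigma_1 v_{w(1)},\ldots,\sigma_k v_{w(k)};w),
\]
and the individual tabloids $w$ appearing here generally do \emph{not} lie in $\tau\cdot S^\Lambda$, nor does the sum decompose term-by-term into simple tensors of the left-hand side. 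So ``the preimage is spanned by such sums'' is true, but recognizing that the span of these sums is \emph{all} of $\bigl(\bigotimes_i \sigma_i\cdot S^\nu\bigr)\otimes \tau\cdot S^\Lambda$ — rather than only a proper subspace of it — is precisely the content you would still need to prove, and your proposal does not supply the idea that does it.

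The paper avoids this entirely by exploiting cyclicity. It restricts $\phi$ to the left-hand side (which, as you note, is an $S_d\wr S_k$-submodule of $M_1$), computes $\phi$ only on the single canonical generating element, and shows that it lands on the generating element of $(S^\nu)^{\tilde\boxtimes k}\oslash\tau\cdot S^\Lambda$. Since $\phi$ is equivariant and both sides are cyclic (of the same dimension), this single evaluation forces $\phi(\mathrm{LHS}) = \mathrm{RHS}$, with no need to describe preimages or images of arbitrary spanning vectors. If you want to salvage your preimage approach, you would need a comparable shortcut — e.g.\ show the preimage contains the generating element of the left-hand side and invoke cyclicity plus a dimension count — but as written the argument stops short of establishing the crucial identification.
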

\begin{proof}
Let $\phi$ be the $S_d\wr S_k$-isomorphism from the proof of \cref{lem: first step towards Specht module}. By restricting $\phi$ to $\left(\bigotimes_{i \in [k]} \sigma_i \cdot S^\nu\right) \otimes \tau\cdot S^{\Lambda}$ (an $S_d \wr S_k$-submodule of its domain) we obtain an $S_d \wr S_k$-isomorphism between $\left(\bigotimes_{i \in [k]} \sigma_i \cdot S^\nu\right) \otimes \tau\cdot S^{\Lambda}$ and $\phi\left(\left(\bigotimes_{i \in [k]} \sigma_i \cdot S^\nu\right) \otimes \tau\cdot S^{\Lambda}\right)$. It remains to show that the image is in fact $(S^\nu)^{\tilde \boxtimes k} \oslash \tau \cdot S^{\Lambda}$. Let $e_\nu \in S^\nu$ and $e_\Lambda \in S^\Lambda$ be the respective generating elements. We first show that $\phi(e_\nu,\ldots, e_\nu; e_\Lambda) \in (S^\nu)^{\tilde \boxtimes k} \oslash \tau \cdot S^{\Lambda}$. 
Recall that 
$\tau \cdot e_\Lambda = \sum_{\tau' \sim \tau} \sum_{c \in C_{t_\Lambda}} \sign(c) \, \tau' *  c \{t_\Lambda\}$. 
Using the linearity of $\phi$, we have 
\begin{align*}
&\phi(\sigma_1 \cdot e_\nu,\ldots,\sigma_k \cdot e_\nu;\tau \cdot e_\Lambda) \\
&= \sum_{\tau' \sim \tau} \sum_{c \in C_{t_\Lambda}} \sign(c) \, \phi(\sigma_1 \cdot e_\nu,\ldots,\sigma_k \cdot e_\nu;\tau' *  c  \{t_\Lambda\}) = \sum_{\tau' \sim \tau} \sum_{c \in C_{t_\Lambda}} \sign(c) \, (e_\nu,\ldots, e_\nu;\tau' *  c  \{t_\Lambda\}) \\
&= (e_\nu,\ldots, e_\nu; \sum_{\tau' \sim \tau} \sum_{c \in C_{t_\Lambda}} \sign(c) \, \tau' *  c  \{t_\Lambda\}) = (e_\nu,\ldots, e_\nu; \tau \cdot e_\Lambda) \in (S^\nu)^{\tilde \boxtimes k} \oslash \tau \cdot S^{\Lambda}.
\end{align*}
In other words, the generating element of $\left(\bigotimes_{i \in [k]} \sigma_i \cdot S^\nu\right) \otimes \tau\cdot S^{\Lambda}$ is mapped to the generating element of $(S^\nu)^{\tilde \boxtimes k} \oslash \tau \cdot S^{\Lambda}$. Since $\phi$ is $S_d \wr S_k$-equivariant, this shows that 
\[
\phi\Big(\Big(\bigotimes_{i \in [k]} \sigma_i \cdot S^\nu\Big) \otimes \tau\cdot S^{\Lambda}\Big) = (S^\nu)^{\tilde \boxtimes k} \oslash \tau \cdot S^{\Lambda}. \qedhere
\]
\end{proof}

Finally, we show that $\tau$ can be removed, by using~$\mu = (1^k)$ in the proposition below. 
\begin{lemma} \label{lem: from stationary to permuting 2}
For $\Lambda, \mu \vdash k$ and $\tau \in T_{\Lambda,\mu}$, we have $(S^\nu)^{\tilde \boxtimes k} \oslash \tau \cdot S^{\Lambda}\cong(S^\nu)^{\tilde \boxtimes k} \oslash S^{\Lambda}$.
\end{lemma}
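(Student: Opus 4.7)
The plan is to exploit the fact that the $\oslash$ construction is functorial in the second factor with respect to $S_k$-isomorphisms. More precisely, if $Y_1$ and $Y_2$ are $S_k$-modules and $\psi : Y_1 \to Y_2$ is an $S_k$-isomorphism, and $Z$ is any $(S_d \wr S_k)$-module, then the map
\[
\mathrm{id}_Z \otimes \psi \,:\, Z \oslash Y_1 \;\longrightarrow\; Z \oslash Y_2
\]
is an $(S_d \wr S_k)$-isomorphism. This will reduce the lemma to the purely $S_k$-theoretic fact that $\tau \cdot S^{\Lambda}$ is $S_k$-isomorphic to $S^{\Lambda}$.

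The first step is to record that functoriality. For $(\bm\alpha,\beta) \in S_d \wr S_k$ and $(x,y_1) \in Z \oslash Y_1$, the definition of $\oslash$ gives
\[
(\mathrm{id}_Z \otimes \psi)\bigl((\bm\alpha,\beta)(x,y_1)\bigr) = \bigl((\bm\alpha,\beta) x,\,\psi(\beta y_1)\bigr) = \bigl((\bm\alpha,\beta) x,\,\beta \psi(y_1)\bigr) = (\bm\alpha,\beta)(\mathrm{id}_Z\otimes\psi)(x,y_1),
\]
where in the middle equality we used that $\psi$ is $S_k$-equivariant. Bijectivity is immediate since $\psi$ is a bijection. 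Thus $\mathrm{id}_Z \otimes \psi$ is the desired $(S_d \wr S_k)$-isomorphism.

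The second step is to produce an explicit $S_k$-isomorphism $\psi : S^{\Lambda} \to \tau \cdot S^{\Lambda}$. By \eqref{eq: decomp permutation module} applied to $M^{\mu}$, the submodule $\tau \cdot S^{\Lambda}$ is an $S_k$-submodule of $M^{\mu}$ that is $S_k$-isomorphic to $S^{\Lambda}$; indeed, by construction the action of $\tau$ on polytabloids (extended linearly from its action on tabloids) sends $S^{\Lambda}$ to $\tau \cdot S^{\Lambda}$, commutes with the $S_k$-action, and is injective because $\tau \cdot S^{\Lambda}$ occurs only once in the decomposition of $M^\mu$ (or equivalently, because $S^{\Lambda}$ is irreducible and $\tau \cdot e_t \neq 0$). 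Define $\psi$ as this $S_k$-equivariant map, which on the canonical generating element satisfies $\psi(e_t) = \tau \cdot e_t$.

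The third step is to apply the first step to $Z = (S^{\nu})^{\tilde\boxtimes k}$, $Y_1 = S^{\Lambda}$, $Y_2 = \tau \cdot S^{\Lambda}$, and $\psi$ as just constructed. This yields the required $(S_d \wr S_k)$-isomorphism
\[
(S^{\nu})^{\tilde\boxtimes k} \oslash S^{\Lambda} \;\cong\; (S^{\nu})^{\tilde\boxtimes k} \oslash (\tau \cdot S^{\Lambda}).
\]
There is no real obstacle here: everything follows from the fact that, in the $\oslash$ construction, the $S_d$-part of the wreath action acts trivially on the second factor, so replacing the second factor by an $S_k$-isomorphic module preserves the whole $(S_d \wr S_k)$-structure. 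The only mild care needed is to verify $S_k$-equivariance of $\psi$, which is standard and already implicit in the derivation of \eqref{eq: decomp permutation module}.
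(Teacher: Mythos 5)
Your proof is correct and follows essentially the same route as the paper: both use the fact that the map $w \mapsto \tau \cdot w$ is an $S_k$-isomorphism from $S^{\Lambda}$ onto $\tau \cdot S^{\Lambda}$ and that tensoring it with the identity on $(S^\nu)^{\tilde\boxtimes k}$ gives an $S_d \wr S_k$-isomorphism (the paper writes the map in the inverse direction $\phi(v_1,\ldots,v_k;\tau\cdot w)=(v_1,\ldots,v_k;w)$). You simply phrase this as a small general functoriality-of-$\oslash$ statement before specializing, and you spell out the injectivity of $\tau\cdot(\cdot)$ in slightly more detail; the substance is identical.
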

\begin{proof}
Let $v_1,\ldots, v_k$ be basis elements of $S^\nu$ and let $w$ be a basis vector of $S^\Lambda$. We define 
$\phi(v_1,\ldots,v_k;\tau \cdot w) = (v_1,\ldots,v_k;w)$.
Since $\tau$ forms a linear bijection between $S^\Lambda$ and $\tau \cdot S^\Lambda$, this provides a linear bijection between $ (S^\nu)^{\tilde \boxtimes k} \oslash \tau \cdot S^{\Lambda}$ and $(S^\nu)^{\tilde \boxtimes k} \oslash S^{\Lambda}$. Moreover, $\phi$ is $S_d \wr S_k$-equivariant since $\tau$ is $S_k$-equivariant.
\end{proof}

We finally consider the case $\nu = (d)$ and $\Lambda = (k-r,1^r)$ (for some $r<k$).  Recall that if~$M$ is an~$S_k$-module, it also is an~$S_d \wr S_k$-module via~$(\bm \alpha, \beta)m = \beta m$ for~$m \in M$ and~$(\bm \alpha, \beta) \in S_d \wr S_k$. By taking a tensor product with sufficiently many copies of the standard basis element $\ytableausetup{centertableaux,boxsize=1.0em,tabloids} \begin{ytableau}
1 & \cdots & d
\end{ytableau}$ of~$S^{(d)}$, one can show the following.
\begin{lemma} \label{lem: the case nu=(d)}
For $\Lambda \vdash k$ and $\tau \in T_{\Lambda,(k-r,1^r)}$, we have 
\begin{align*} 
  \Big(\bigotimes_{i \in [r]} \sigma_i \cdot S^{(d)}\Big) \otimes \tau\cdot S^{\Lambda} \cong   (S^{(d)})^{\tilde \boxtimes k} \oslash \tau \cdot S^{\Lambda} \cong  (S^{(d)})^{\tilde \boxtimes k} \oslash S^{\Lambda}  \cong S^{\Lambda}.
\end{align*}
\end{lemma}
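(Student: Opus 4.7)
The plan is to exploit the fact that the Specht module $S^{(d)}$ is the one-dimensional trivial $S_d$-module: if $e$ denotes its (unique up to scalar) basis vector, then $\alpha \cdot e = e$ for every $\alpha \in S_d$. Under this observation, each of the three claimed isomorphisms becomes an identification of one-dimensional trivial tensor factors, and the result reduces to rewriting. In particular, I will construct a single composed isomorphism $\phi : \bigotimes_{i \in [r]} \sigma_i \cdot S^{(d)} \otimes \tau \cdot S^\Lambda \to S^\Lambda$ and then verify equivariance.

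For the first isomorphism, both $\bigotimes_{i \in [r]}\sigma_i \cdot S^{(d)}$ (each factor being a one-dimensional subspace of $M^{\mu^{(d)}_{q_i}}$ spanned by $\sigma_i \cdot e$) and $(S^{(d)})^{\tilde\boxtimes k}$ (spanned by $e^{\otimes k}$) are one-dimensional. I would define $\phi_1((\sigma_1\cdot e,\ldots,\sigma_r\cdot e;\, v)) = ((e,\ldots,e);\, v)$ where the right-hand side has $k$ copies of $e$, extended $\mathbb{C}$-linearly. Equivariance is immediate: on the left, the action from Eq.~(action on W) sends $(\sigma_1\cdot e,\ldots,\sigma_r\cdot e; v)$ to $(\alpha_{\beta w(1)}\sigma_1\cdot e,\ldots,\alpha_{\beta w(r)}\sigma_r\cdot e; \beta v)$, and since each $\alpha_{\beta w(i)}\cdot e = e$ the first $r$ coordinates are unchanged; on the right, the action on $(S^{(d)})^{\tilde\boxtimes k}$ fixes $e^{\otimes k}$ for the same reason.

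For the second isomorphism, the map $s \mapsto \tau \cdot s$ is an $S_k$-module isomorphism from $S^\Lambda$ onto its image $\tau \cdot S^\Lambda \subseteq M^{(k-r,1^r)}$ (this is the standard fact underlying the decomposition in Eq.~(decomp permutation module), and was also used implicitly in \cref{lem: from stationary to permuting 2}). Tensoring with the identity on $(S^{(d)})^{\tilde\boxtimes k}$ then yields the $S_d\wr S_k$-isomorphism $\phi_2 : (e^{\otimes k};\, \tau \cdot s) \mapsto (e^{\otimes k};\, s)$, with equivariance immediate from the fact that the $(\bm\alpha,\beta)$-action on both sides acts as $\beta$ on the second factor and trivially on the first. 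Finally, for the third isomorphism I would use that $(S^{(d)})^{\tilde\boxtimes k}$ is one-dimensional and, as noted above, acted on trivially by all of $S_d\wr S_k$; therefore $\phi_3 : (e^{\otimes k};\, s) \mapsto s$ is an $S_d\wr S_k$-isomorphism, where on the target $S^\Lambda$ becomes an $S_d\wr S_k$-module via inflation $(\bm\alpha,\beta)\cdot s = \beta s$ as recalled just before the statement.

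I do not expect any real obstacle: every step is a bookkeeping verification that the action formula from Eq.~(action on W) collapses appropriately once the trivial one-dimensional factors are identified. The only care needed is to track which element of $S_k$ (namely $\beta w$ on the first factor versus $\beta$ on the inflated factor) acts where; but because the $S_d$-action is trivial on $S^{(d)}$, these distinctions evaporate and the three maps $\phi_1, \phi_2, \phi_3$ compose to the asserted isomorphism.
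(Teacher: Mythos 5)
Your proof is correct, and it matches the paper's approach, which for this lemma is given only as a one-sentence hint (``by taking a tensor product with sufficiently many copies of the standard basis element of $S^{(d)}$''). You correctly identify the crux: $S^{(d)}$ is the trivial one-dimensional $S_d$-module, so the seemingly different actions on the two sides of the first isomorphism (\cref{eq: action on W}, which picks $\alpha_{\beta w(i)}$ depending on the tabloid $w$, versus the $\tilde\boxtimes$-action in $(S^{(d)})^{\tilde\boxtimes k} \oslash \tau\cdot S^\Lambda$, which permutes tensor legs) both collapse to $\beta$ acting on the second factor; tensoring with $k-r$ extra copies of the trivial generator and stripping off $\tau$ and the $\sigma_i$ are then routine. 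The second isomorphism is exactly \cref{lem: from stationary to permuting 2} specialized to $\nu=(d)$, as you note, and the third is the trivial identification with the inflated module.
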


\subsection{Decomposing \texorpdfstring{$\C^{([d]\times [k])^t}$}{Cdkt} into Specht modules} \label{sec: explicit decomposition}

Combining the results from the previous sections, we obtain a decomposition of $\C^{([d]\times [k])^t}$ into irreducible modules. 
We have 
\begin{align} \label{eq: finaldecomp}
    \C^{([d] \times [k])^t} 
    &= \bigoplus_{\substack{(P,\mathbf Q) \in ([d]\times[k])^t / (S_d \wr S_k)}} \phi_{P,\mathbf Q}  \bigoplus_{\underline \Lambda \vdash k} \left(\bigoplus_{\substack{\bm j \in [\ell]^r: \\
|\bm j^{-1}(a)| = |\Lambda^a|  \text{ for } a \geq 2}}
\bigoplus_{\bm \tau \in \bm T^k_{\underline \Lambda,\underline \gamma}} \bigoplus_{\bm \sigma \in \bm T^d_{\bm j,\bm q}}   W_{\bm \sigma,\bm \tau}\right) 
\end{align}
where $\phi_{P, \bm Q}$ is the isomorphism provided in \cref{lem: VPQ using tabloids}, $r = |P|$, $q_i = |Q_i|$. 

From this decomposition we derive an explicit representative set by selecting for each $W_{\bm \sigma, \bm \tau}$ the canonical generating element as the representative element (and grouping them according to $\underline \Lambda$). The representative vectors~$u_{(\bm \sigma, \bm \tau),(P,{\bm Q})}$ will be constructed from pairs~$(\bm \sigma, \bm \tau)$ and $(P,{\bm Q})$ appearing in the above decomposition. We now give precise definitions.

\paragraph{Representative elements.}
Fix a pair~$(P,{\bm{Q}})$ where~$P=\{P_1,\ldots,P_r\}$ is a set partition of~$[t]$ into at most $k$ parts and~${\bm{Q}} = (Q_1,\ldots,Q_r)$, where each~$Q_i$ is a set partition of~$P_i$ in at most~$ d$ parts. Let $\bm{j} \in [\ell]^r$, $\bm q = (q_1,\ldots,q_r) = (|Q_1|,\ldots,|Q_r|)$, $\underline \Lambda  \vdash |\underline{\gamma}|$ where $\underline \gamma$ is defined in \cref{eq: gamma}, and~$\bm \sigma \in \bm T^d_{\mathbf j,\mathbf q}$ and $\bm \tau \in \bm{T}^k_{\underline\Lambda,\underline\gamma}$. The representative vector corresponding to $(\bm \sigma, \bm \tau)$ and~$(P,{\bm Q})$ is equal to 
\begin{align*}
u_{(\bm \sigma, \bm \tau),(P,{\bm Q})} := \phi^{-1}_{P,{\bm Q}} \big((\id, \id), (\sigma_1 \cdot v_1,\ldots,\sigma_r \cdot v_r; \tau_1 \cdot u_1,\ldots, \tau_\ell \cdot u_\ell)\big),
\end{align*}
where the vector $\big((\id,\id),(v;u)\big)$ is the generating element of $S^{\underline\Lambda}_{\bm j}$  described in~Section~\ref{sec: rep wreath}. Explicitly, we have
\begin{align*} 
u_{(\bm \sigma, \bm \tau),(P,{\bm Q})} = \phi^{-1}_{P,{\bm Q}} \left( \left( \bigotimes_{i=1}^r \sigma_i \cdot e_{\nu_{j(i)}} \right) \otimes \left(\bigotimes_{a=1}^\ell \tau_a \cdot e_{\Lambda^a} \right) \right). 
\end{align*} 
The next step is to describe these vectors explicitly as vectors in~$\C^{([d] \times [k])^t}$.
Note that
$$
 V_{Q_1} \otimes \cdots \otimes V_{Q_r} \otimes V_P \cong  V_{P,\bm{Q}}  \subseteq \C^{([d] \times [k])^t},
$$
via the isomorphism given by
$w_1\otimes \cdots \otimes w_r \otimes u \mapsto \text{Perm}_P(w_1\otimes \cdots \otimes w_r) \otimes u$.
Here~$\text{Perm}_P$ is the permutation matrix which permutes the coordinates of~$w_1\otimes \cdots \otimes w_r$ (which is a vector in~$\C^{[d]^t}$) in such a way that~$w_i$ corresponds to the coordinates given by~$P_i$. Then
\begin{align*} 
\phi_{P,\bm{Q}}^{-1}((\id, \id), (v_1,\ldots,v_r;u_1,\ldots,u_{\ell})) = \text{Perm}_P \big(\phi_{Q_1}^{-1}(v_1)\otimes \ldots \otimes \phi_{Q_r}^{-1}(v_r)\big) \otimes \phi_{P}^{-1}(u_1\otimes \cdots \otimes u_{\ell}),
\end{align*}
where~$\phi_P : V_P \to M^{\mu_r}$ is the isomorphism from \cref{reprsetSk} and we identify \smash{$\left.\left[\bigotimes_{a \in [\ell]} M^{\gamma^a}\right]\right\uparrow_{d \wr \bm j}^{d \wr k}$} with $M^{\mu^{(k)}_r}$ (cf.~\cref{lem: mu r to gammas}). 

\paragraph{The dimension of \texorpdfstring{$\mathrm{End}_{S_d \wr S_k}(\C^{([d] \times [k])^t})$}{Sd-wreath-Sk-invariant endomorphisms on V}.}
As in \cref{sec: Sk-invariant endos}, we note that $\mathrm{dim}(\mathrm{End}_{S_d \wr S_k}(\C^{([d] \times [k])^t}))$ equals the dimension of the space of $S_d \wr S_k$-invariant noncommutative polynomials of degree exactly~$2t$. In other words, it equals the number of pairs $(P,\{Q_1,\ldots,Q_r\})$ where~$P$ is a set partition of~$[2t]$ which determines the assignment of bases (to the~$k$-part) with~$|P|=r$, and~$Q_i$ is a set partition of~$P_i$ which determines the assignment of the basis elements (to the $d$-part). When $d,k \geq 2t$, this number is independent of $k$ and $d$ and simply becomes the number of pairs of set partitions~$(P,P')$ of  a set of  $2t$ elements with the property that the set partition~$P'$ is a refinement of~$P$. The sequence of numbers $(C_n)_{n \geq 0}$ such that~$C_n$ is the number of pairs of set partitions~$(P,P')$ with $P$ a set partition of $[n]$ and $P'$ a refinement of~$P$ is known in the online encyclopedia of integer sequences as sequence~\texttt{A000258}; its first ten elements are 1, 1, 3, 12, 60, 358, 2471, 19302, 167894, 1606137. We emphasize that for $d,k \geq 2t$, the size of the symmetry reduced SDP thus becomes independent of $d$ and $k$.

\subsection{Representative set for the \texorpdfstring{$S_{d-1} \times (S_d \wr S_{k-1})$}{Sd-1 times (Sd wr Sk-1)} action on \texorpdfstring{$(1,1) \times ([d] \times [k])^t$}{(1,1) times (d times k) power t }}
\label{sec: representative set thalf}
We will now give a representative set useful for the case~$t+\tfrac 12$, as discussed in \cref{sec: groupinvariance}. 
We naturally identify $x_{1,1}\R\langle x_{i,j}:i \in [d], j \in [k]\rangle_{=t}$ with $\C^{(1,1) \times ([d] \times [k])^t}$. Recall that this set is a module for~$G:=S_{d-1} \times (S_d \wr S_{k-1})$, where~$S_{d-1}$ permutes the indices $2,\ldots,d$ corresponding to~$k=1$, and where~$S_d \wr S_{k-1}$ acts on the indices~$(i,j)$ with $ i \in [d], \, j \in \{2,\ldots, k\}$. 

As before, we start by decomposing $\C^{(1,1) \times ([d] \times [k])^t}$ using the equivalence classes $((1,1) \times([d]\times[k])^t) / (S_{d-1} \times (S_d \wr S_{k-1}))$. Such an equivalence class corresponds to a pair $(P,\bm Q)$ where $P = \{P_1,\ldots, P_r\}$ is a set partition of $[t+1]$ in at most $k$ parts and $\mathbf Q = (Q_1,\ldots,Q_r)$ where each $Q_i$ is a set partition of $P_i$ in at most $d$ parts. This gives a first decomposition of our vector space:  
\begin{equation*}
\C^{(1,1) \times ([d] \times [k])^t} = \bigoplus_{\substack{(P,\mathbf Q) \in ((1,1) \times([d]\times[k])^t) / (S_{d-1} \times (S_d \wr S_{k-1}))}} V_{P,\mathbf Q} 
\end{equation*}
In the decomposition, we may assume that~$1 \in P_1$ (and hence gets assigned basis~$1$) and that $1 \in Q_1^{(1)}$ (and hence gets assigned basis element~$1$). 
We then have that $V_{P,\bm Q}$ is $S_{d-1} \times (S_d \wr S_{k-1})$-isomorphic to the tensor product~$V_1 \otimes V_2$, where
\begin{align*}
    V_1 := M^{\mu^{(d-1)}_{q_1-1}}, \qquad V_2 := (M^{\mu^{(d)}_{q_2}} \otimes \ldots \otimes M^{\mu^{(d)}_{q_r}}) \otimes  M^{\mu^{(k)}_{r-1}}. 
\end{align*}
The group~$G_1:=S_{d-1}$ acts on~$V_1$, and $G_2:=S_d \wr S_{k-1}$ acts on~$V_2$. To obtain a representative set for the action of~$G_1 \times G_2$ on~$V_1 \otimes V_2$, we use \cref{eq: prodrep} to combine representative sets for the actions of~$G_1$ and~$G_2$ on~$V_1$ and~$V_2$ from Sections~\ref{sec:reprset sk} and~$\ref{sec:reprset wreath}$, respectively. One then obtains a representative set for the $G$-action on $\C^{(1,1) \times ([d] \times [k])^t}$ by combining the representative sets of all $V_{P,\bm Q}$.

\subsection{An example: homogeneous quadratic polynomials}

\label{sec: example 232}
To illustrate the theory developed in Section~\ref{sec:reprset wreath}, we show how to compute one particular representative element. We consider homogeneous quadratic polynomials in variables~$x_{i,j}$, where~$i\in [d]$ and~$j \in [k] $, that are invariant under the action of $S_d \wr S_k$. 
We express the concepts in the language of polynomials. In this example we take~$d=2$,~$k=3$ and $t=2$. 

We first decompose the space of polynomials into $S_d \wr S_k$-invariant subspaces corresponding to orbits of monomials. In this example we consider the orbit
$$
\{x_{i_1,j_1}x_{i_2,j_2} \,\, | \,\, i_1,i_2 \in [d],\,\, j_1,j_2 \in [k] \text{ with }  j_1 \neq j_2\}.
$$
This orbit corresponds to the tuple of partitions~$(P, \bm Q) = (\{\{1\},\{2\}\},(\{\{1\}\},\{\{2\}\}))$. Here~$P$ is a partition of~$[t]$  and~$\bm Q$ is a tuple of set partitions~$\bm Q=(Q_1,\ldots,Q_{|P|})$ where each~$Q_i$ is a set partition of~$P_i$ in at most~$d$ parts. The corresponding subspace is
$$
V_{P, \bm Q } = \text{Span}\{x_{i_1,j_1}x_{i_2,j_2} \,\, | \,\, i_1,i_2 \in [d],\,\, j_1,j_2 \in [k] \text{ with }  j_1 \neq j_2\}.
$$

We have seen that, in order to decompose~$V_{P, \bm Q}$, we consider $\ell$-multipartitions of~$k=3$, where we recall that $\ell$ is the number of partitions of $d$. In this example $d=2$ and therefore $\ell=2$.  For this example we take the $\ell$-multipartition $\underline \Lambda = ((1),(1,1))$. The rows and columns of the block indexed by~$\underline \Lambda$ in the block diagonalization are indexed by tuples~$(\bm \sigma, \bm \tau),(P, \bm Q)$, where~$\bm \sigma$ is an element of $\bm T^d_{\bm j, \bm q}$ and~$\bm \tau$ is an element of $\bm{T}^k_{\underline \Lambda,\underline \gamma}$. 

To describe $\bm T^d_{\bm j, \bm q}$, we first need $\bm j$ and $\bm q$. Throughout this section we have~$r=|P|$, so in this case~$r=2$. In the final decomposition in Eq.~\eqref{eq: finaldecomp}, we have $\bm j \in [\ell]^r=[2]^2$, where~$|\bm j^{-1}(2)|=|\Lambda^2| = 2$, so~$\bm{j} = (2,2)$. Furthermore, we have~$\bm q = (|Q_1|,\ldots, |Q_r|)=(|Q_1|,|Q_2|)=(1,1)$. The collection $\bm T^d_{\bm j, \bm q}$ is a tuple of sets of semistandard tableaux (cf. Eq~\eqref{eq: Tdjq}), one set for each set in $P$ ($r=|P|$), where the content is defined by the size of the corresponding partition in $\bm Q$, and the shape is~$\nu_{\bm j(i)}$, for~$i=1,\ldots,r$, which is determined by~$\underline \Lambda$ as~$\bm j$ is determined by~$\underline \Lambda$. Formally, we have (cf.~Eq.~\eqref{eq: Tdjq})
$$
\bm T_{\bm j, \bm q}^d = \bm T_{(2,2), (1,1)}^2 =  \bigtimes_{i \in [r]} T_{\nu_{j(i)},\mu^{(d)}_{q_i}} =T_{(1,1),\mu^{(2)}_{1}} \times T_{(1,1),\mu^{(2)}_{1}} =  \left\{\ytableausetup{centertableaux,boxsize=1em,tabloids=off} \begin{ytableau}
1\\2
\end{ytableau}\right\} \times \left\{ \begin{ytableau}
1\\2\\
\end{ytableau}\right\},
$$ 
which consists of only one element: $\bm \sigma = \left(\ytableausetup{centertableaux,boxsize=1em,tabloids=off} \begin{ytableau}
1\\2
\end{ytableau}, \begin{ytableau}
1\\2\\
\end{ytableau}\right)$.

Similarly, to describe $\bm T_{\underline \Lambda, \underline\gamma}^k$, we first describe~$\underline \gamma$. The tuple~$\bm j$ is via Eq.~\eqref{eq: gamma} associated with the multipartition $\underline \gamma = ((k-r,0),(1,1))=((1),(1,1))$. The collection $\bm T_{\underline \Lambda, \underline\gamma}^k$ is a tuple of sets of semistandard tableaux (cf. Eq~\eqref{eq: Tklg}), one set for each partition in the multipartition~$\underline \Lambda$, where the content is defined by the size of the corresponding partition in $\underline \gamma$. Formally, we have (cf.~Eq.~\eqref{eq: Tklg})
$$
\bm{T}^k_{\underline \Lambda,\underline \gamma} = \bigtimes_{a \in [\ell]} T_{\Lambda^a,\gamma^a} = T_{(1),(1)} \times T_{(1,1),(1,1)} =  \left\{\ytableausetup{centertableaux,boxsize=1em,tabloids=off} \begin{ytableau}
1
\end{ytableau}\right\} \times \left\{ \begin{ytableau}
1\\2\\
\end{ytableau}\right\},
$$
which consists of one element, namely
$\bm \tau 
= \left(\ytableausetup{centertableaux,boxsize=1em,tabloids=off} \begin{ytableau}
1\\
\end{ytableau}, \begin{ytableau}
1\\2\\
\end{ytableau}\right)
$. We have now defined $(\bm \sigma, \bm \tau)$ and $(P,{\bm Q})$, so we are ready to compute the representative element~$u_{(\bm \sigma, \bm \tau),(P,{\bm Q})}$. We have
\begin{align*}
    u_{(\bm \sigma, \bm \tau),(P,{\bm Q})} &=  \phi^{-1}_{P,{\bm Q}} \left( \left( \bigotimes_{i=1}^r \sigma_i \cdot e_{\nu_{j(i)}} \right) \otimes \left(\bigotimes_{a=1}^\ell \tau_a \cdot e_{\Lambda^a} \right) \right)
    \\ &=  \phi^{-1}_{P,{\bm Q}} \left( \left( \left(\ytableausetup{tabloids} \begin{ytableau}
1\\2
\end{ytableau} -\begin{ytableau}
2\\1
\end{ytableau}\right)\otimes \left(\begin{ytableau}
1\\2
\end{ytableau} -\begin{ytableau}
2\\1
\end{ytableau}\right)\right) \otimes \left( (\begin{ytableau}
1
\end{ytableau})\otimes \left(\begin{ytableau}
1\\2
\end{ytableau} -\begin{ytableau}
2\\1
\end{ytableau}\right) \right) \right) \\&= \text{Perm}_P \left( \phi_{\{\{1\}\}}^{-1}\left(\ytableausetup{tabloids} \begin{ytableau}
1\\2
\end{ytableau} -\begin{ytableau}
2\\1
\end{ytableau}\right)\otimes \phi_{\{\{2\}\}}^{-1}\left(\begin{ytableau}
1\\2
\end{ytableau} -\begin{ytableau}
2\\1
\end{ytableau}\right)\right) \otimes \phi_{\{\{1\},\{2\}\}}^{-1} \left( (\begin{ytableau}
1
\end{ytableau})\otimes \left(\begin{ytableau}
1\\2
\end{ytableau} -\begin{ytableau}
2\\1
\end{ytableau}\right) \right)
\end{align*}
As in the discussion below Lemma~\ref{lem: first decomposition of W}, we identify \smash{$\left.\left[\bigotimes_{a \in [\ell]} M^{\gamma^a}\right]\right\uparrow_{d \wr \bm j}^{d \wr k}$} with $M^{\mu^{(k)}_r}$. In our case, $S_{\bm j }= S_{\{3\}} \times S_{\{1,2\}}$, cf.\ Eq.~\eqref{eq: Sj}, so the element 
$$(\begin{ytableau}
1
\end{ytableau})\otimes \left(\begin{ytableau}
1\\2
\end{ytableau} -\begin{ytableau}
2\\1
\end{ytableau}\right) \text{ is identified with } \begin{ytableau}
3\\1\\2
\end{ytableau} -\begin{ytableau}
3\\2\\1
\end{ytableau} \in M^{\mu_2^{(3)}}.
$$ Therefore we find
\begin{align*}
    u_{(\bm \sigma, \bm \tau),(P,{\bm Q})} &= \text{Perm}_P \left( \phi_{\{\{1\}\}}^{-1}\left(\ytableausetup{tabloids} \begin{ytableau}
1\\2
\end{ytableau} -\begin{ytableau}
2\\1
\end{ytableau}\right)\otimes \phi_{\{\{2\}\}}^{-1}\left(\begin{ytableau}
1\\2
\end{ytableau} -\begin{ytableau}
2\\1
\end{ytableau}\right)\right) \otimes \phi_{\{\{1\},\{2\}\}}^{-1} \left( \begin{ytableau}
3\\1\\2
\end{ytableau} -\begin{ytableau}
3\\2\\1
\end{ytableau}\right) 
\\&= \text{Perm}_P \underbrace{\left( (y_2-y_1)(y_2-y_1)\right)}_{\text{`$d$-part'}} \otimes \underbrace{(z_1z_2-z_2z_1)}_{\text{`$k$-part'}} 
\\&= x_{2,1}x_{2,2}-x_{1,1}x_{2,2}-x_{2,1}x_{1,2}+x_{1,1}x_{1,2} -x_{2,2}x_{2,1}+x_{1,2}x_{2,1}+x_{2,2}x_{1,1}-x_{1,2}x_{1,1}.
\end{align*}
In this case, Perm$_P$  can be viewed as identifying $x_{i,j} = y_i z_j$, where~$i \in [d]$ and~$j \in [k]$.
The other representative elements are computed in an analogous way, by considering different tuples of set partitions~$(P,\bm Q )$, $\ell$-multipartitions~$\underline \Lambda$, and tuples of Young tableaux~$\bm \sigma$ and $\bm \tau$.

\section{\texorpdfstring{$V_{P,\bm Q}$}{VPQ} is isomorphic to a permutation module \texorpdfstring{$M^{\underline{\mu}}$}{Mmu}.} \label{sec: iso to perm module}

Here we show that the $S_d \wr S_k$-modules $V_{P,\bm Q}$ from \cref{eq: first decomposition of V} can alternatively be viewed as a generalization of permutation modules to the wreath product setting. Our decomposition of $V_{P,\bm Q}$ can thus be viewed as an explicit decomposition of such a permutation module in Specht modules, or alternatively, as a characterization of the space of homomorphisms from Specht modules to such permutation modules. 

Analogously to \cref{def: Specht module wreath} for Specht modules, we use the following definition of a permutation module for $S_d \wr S_k$; these modules were named permutation modules e.g.~in \cite{Green19}.
\begin{definition} \label{def: permutation module}
For an $\ell$-multipartition $\underline \mu$ of $k$, the \emph{permutation module} $M^{\underline \mu}$ is defined as 
\[
M^{\underline \mu} := \Big[(M^{\nu_1}, \ldots, M^{\nu_{\ell}})^{\tilde \boxtimes |\underline \mu|} \oslash (M^{\mu^1} \boxtimes \cdots \boxtimes M^{\mu^\ell}) \Big]\left.\vphantom{\Big]}\right\uparrow_{d \wr |\underline \mu|}^{d \wr k}.
\]
\end{definition}
\noindent Standard basis elements of $M^{\underline \mu}$ are of the form 
$\big((\id,b),(v_1 \otimes \cdots \otimes v_k \otimes u_1 \otimes \cdots \otimes u_\ell)\big)$
where $(\id,b)$ is a coset representative, $v_1 \otimes \cdots \otimes v_k$ is an element of $(M^{\nu_1}, \ldots, M^{\nu_{\ell}})^{\tilde \boxtimes |\underline \mu|}$, and for each $a \in [\ell]$, $u_a$ is a tabloid in $M^{\mu^a}$ with content  $I_a := \{1+\sum_{j<a} |\mu^{j}|, \ldots,\sum_{j \leq a} |\mu^j|\}$ (if $|\mu^a|>0$). To make the distinction between $S_d$-modules and $S_k$-modules more apparent we will write $(v_1,\ldots,v_k; u_1,\ldots, u_k)$ to denote the vector $v_1 \otimes \cdots \otimes v_k \otimes u_1 \otimes \cdots \otimes u_\ell$. That is, we write standard basis vectors of $M^{\underline \mu}$ as $\big((\id,b),(v_1,\ldots,v_k; u_1,\ldots,u_\ell)\big)$.

\subsection{The isomorphism}
Given~$P$ and $\bm{Q} =(Q_1,\ldots,Q_r)$, set~$q_i:=|Q_i|$ for each~$i \in [r]$. Recall that in the vector space $V_{P,\bm Q}$ the set partition $P$ is used to assign \emph{distinct} bases ($j$'s) to variables in the monomial $x_{i_1,j_1} \cdots x_{i_t,j_t}$. The set partition $Q_i$ of $P_i$ is then used to assign $q_i = |Q_i|$ \emph{distinct} basis elements ($i$'s) to the variables in $P_i$. We now show how to interpret $V_{P,\bm Q}$ as a permutation module. 

For each $j \in [t-r]$, let $n_j = |\{i \in [r]: s_i = j\}|$. 
Define the $\ell$-multipartition $\underline \mu = (\mu^1,\ldots,\mu^\ell)$ by 
\begin{align*} 
\mu^a = \begin{cases}
(k-r) \quad\quad\quad&\mbox{if } a = 1 \\
( 1^{n_j} ) &\mbox{if }\nu_a = \mu^{(d)}_{j} \text{ for some $j \in [t-r]$} \\
(\,) & \mbox{otherwise}.
\end{cases}
\end{align*}
Note that $\sum_{j \in [t-r]} n_j = r$ and~$\mu^1=(k-r)$, so~$\underline{\mu}$ 
is an~$\ell$-multipartition of~$k$.
\begin{lemma}\label{Vpq isom Mgamma} We have $V_{P,\mathbf Q} \cong M^{\underline \mu}$.
\end{lemma}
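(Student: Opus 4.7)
The plan is to construct an explicit $S_d \wr S_k$-equivariant bijection $\Phi: W \to M^{\underline{\mu}}$, where $W$ is the module from \cref{lem: VPQ using tabloids}; combined with that lemma this gives the desired isomorphism $V_{P,\bm{Q}} \cong M^{\underline{\mu}}$.

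First I would set things up by extending the tuple $\bm j \in [\ell]^r$ (defined by $\nu_{j(i)} = \mu^{(d)}_{q_i}$) to $\bm{\tilde j} \in [\ell]^k$ with $\tilde j(i) = 1$ for $i > r$. Since each $q_i \geq 1$ forces $\nu_{j(i)} \neq (d) = \nu_1$, one verifies $|\bm{\tilde j}^{-1}(a)| = |\mu^a|$ for every $a \in [\ell]$, so that the Young subgroup $S_{|\underline{\mu}|}$ used in the induction defining $M^{\underline{\mu}}$ has blocks of the correct sizes.

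Next I would define $\Phi$ on a standard basis element $(v_1, \ldots, v_r; w) \in W$, where $w:[r]\to [k]$ is the injection associated with the L-shaped tabloid (so the top row equals $B_1:=[k]\setminus w([r])$). Set $B_a := \{w(i): j(i) = a\}$ for $a \geq 2$, and let $b \in S_k$ be the unique coset representative of $S_{|\underline{\mu}|}$ with $b(I_a) = B_a$ and $b|_{I_a}$ increasing. For $a \geq 2$ with $|\mu^a|>0$, enumerate $j^{-1}(a) = \{i^*_1<\cdots<i^*_{|\mu^a|}\}$ and let $U_a \in M^{\mu^a}$ be the tabloid placing $b^{-1}(w(i^*_s))$ in its $s$-th row, with $u_i := v_{i^*_s}$ for $i = b^{-1}(w(i^*_s)) \in I_a$. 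For $a=1$ the factors $u_i \in M^{(d)}$ and $U_1 \in M^{(k-r)}$ are one-dimensional and hence uniquely determined. Set
$$\Phi(v_1,\ldots,v_r;w) := \bigl((\id,b),\,(u_1,\ldots,u_k;\, U_1,\ldots,U_\ell)\bigr).$$

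To finish I would verify two things. First, $\Phi$ is a bijection on standard bases: from the right-hand data one recovers $w(i)=b(U_{j(i)}(s(i)))$ and $v_i = u_{U_{j(i)}(s(i))}$, where $s(i)$ is the rank of $i$ in $j^{-1}(j(i))$; a dimension count (both sides equal $\tfrac{k!}{(k-r)!}\cdot \prod_i \dim M^{\nu_{j(i)}}$) confirms this. Second, $\Phi$ is $S_d \wr S_k$-equivariant: for $(\bm \alpha, \beta) \in S_d \wr S_k$, write $(\bm \alpha, \beta)(\id,b) = (\id,b')(\bm{\tilde\alpha},\tilde\beta)$ with $b'(I_a)=\beta B_a$, $b'$ increasing on each block, and $(\bm{\tilde\alpha},\tilde\beta) \in S_d \wr S_{|\underline{\mu}|}$; this forces $\tilde\alpha_i = \alpha_{b'(i)}$ and $\tilde\beta^{-1}(i) = b^{-1}\beta^{-1} b'(i)$, and one then checks that the resulting $U'_a = \tilde\beta U_a$ and $u'_i = \tilde\alpha_i u_{\tilde\beta^{-1}(i)}$ agree with $\Phi$ applied to $(\alpha_{\beta w(1)}v_1,\ldots,\alpha_{\beta w(r)} v_r;\beta w)$.

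The main obstacle is the bookkeeping in the equivariance step: one must simultaneously track how $\beta$ acts on the coset representative $b$, on the within-block orderings encoded by the $U_a$, and on the internal $S_d$-data carried by the $u_i$. This is directly analogous to the calculation in the proof of \cref{lem: mu r to gammas}; the key point making everything fit together is that requiring $b'$ to be increasing on each $I_a$ uniquely determines the splitting $(\bm{\tilde\alpha},\tilde\beta)$, and this splitting then acts on the inducing module of $M^{\underline{\mu}}$ in precisely the way dictated by the $S_d \wr S_k$-action on $W$.
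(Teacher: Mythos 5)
Your proposal is correct and takes essentially the same approach as the paper: both construct an explicit bijection between basis elements of $V_{P,\bm Q}$ (equivalently $W$) and $M^{\underline\mu}$ — determined by a coset representative $b$ encoding which bases are assigned to the $P_i$'s, tabloids in the $M^{\mu^a}$'s tracking the within-block orderings, and $M^{\nu_a}$-data carrying the $Q_i$ information — and verify $S_d\wr S_k$-equivariance via the coset factorization $(\bm\alpha,\beta)(\id,b)=(\id,b')(\bm{\tilde\alpha},\tilde\beta)$. The only cosmetic differences are that you route through the intermediate module $W$ from \cref{lem: VPQ using tabloids} and keep the partitions $\nu_a$ in lexicographic order, whereas the paper works directly with monomials and locally reorders the $\nu_a$'s to simplify the indexing.
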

\begin{proof}
For this proof it will be notationally convenient to change the order of the partitions of~$d$ in $M^{\underline \mu}$: we assume $\nu_j = \mu^{(d)}_j$ for $j \in [t-r]$ and we let $\nu_\ell = (d)$. Correspondingly, we then have $\mu^j = (1^{n_j})$ for $j \in [t-r]$ and $\mu^\ell=(k-r)$. Changing the order of the partitions of~$d$ leads to an $S_d\wr S_k$-isomorphic module, which, with abuse of notation, we will denote with $M^{\underline \mu}$ as well. We consider $S_d \wr S_{|\underline \mu|}$ as a subgroup of $S_d \wr S_k$. As coset representatives we take the elements $(\id,b)$ where $b \in S_k$ is such that $b(i)>b(i')$ whenever there exists a $a \in [\ell]$ such that $i,i' \in I_a$ and $i >i'$.

We first define a bijection between monomials in $V_{P,\mathbf Q}$ and basis vectors in $M^{\underline \mu}$. Recall that 
\[
I_j = I(\underline \mu)_j = \Big\{1+\sum_{i<j} |\mu^i|,\ldots,|\mu^j|+\sum_{i<j} |\mu^i|\Big\} = \Big\{1+\sum_{i<j} n_i,\ldots, n_j + \sum_{i<j} n_i\Big\}.
\]
Then, without loss of generality, we may label the $P_i$'s and corresponding $Q_i$'s in such a way that $|Q_i| = n_j$ if $i \in I_j$ (each $i \in [t]$ is contained in exactly one such $I_j$).  Given a monomial $x_{i_1,j_1} \cdots x_{i_t,j_t}$ in $V_{P,\mathbf Q}$, we define the following basis vector in $M^{\underline \mu}$. The assignment of bases to $P_1,\ldots,P_r$ uniquely determines a coset representative $b$ and an $\ell$-tuple $(u_1,\ldots, u_\ell)$ of tabloids where each $u_a:I_a\to I_a$ has shape $\mu^a$: for $j \in [t-r]$ the tabloid $u_j$ is such that if $i \in I_j$, then $P_i$ is assigned basis $b(u_j(i))$, and finally we let $u_\ell$ be the tabloid of shape $(k-r)$ with content $I_\ell$. 
For $i \in I_j$ define a tabloid $v_i$ of shape $\mu_j^{(d)}$ as \[
v_{i} = \qquad \ytableausetup{tabloids, boxsize=2em}\begin{ytableau}
\phantom{,}  & \cdots & \phantom{,} \\
w_{u_j^{-1}( i)}^1 \\
\vdots \\
w_{u_j^{-1}( i)}^{j} \\
\end{ytableau}
\]
where $w_{u_j^{-1}( i)}^{j'} \in [d]$ is the basis element assigned to the $j'$-th set in $Q_{u_j^{-1}(i)}$. We let $v_{r+1},\ldots,v_{k}$ equal the tabloid of shape $(d)$ with content $(1^d)$. 
Combining the above, to a monomial $x_{i_1,j_1} \cdots x_{i_t,j_t}$ in $V_{P,\mathbf Q}$, we associate the basis vector
\begin{equation} \label{eq: basis vector from monomial}
\big((\id,b),(v_1,\ldots,v_k; u_1,\ldots,u_{\ell})\big) \in M^{\underline \mu}.
\end{equation}
This construction defines a (linear) bijection $\phi$ between $V_{P,\mathbf Q}$ and $M^{\underline \mu}$. To conclude the proof we show that $\phi$ respects the $S_d\wr S_k$-action. 

Let us first consider the $S_d \wr S_k$-action on the element $\phi(x_{i_1,j_1} \cdots x_{i_t,j_t})$ shown in~\eqref{eq: basis vector from monomial}. Let $(\bm \alpha,\beta) \in S_d \wr S_k$. Let $(\id,\tilde b)$ be the coset representative for which $(\bm \alpha,\beta) (\id,b) \in (\id,\tilde b) \cdot S_d \wr S_{|\underline \mu|}$ and let $(\bm{\tilde \alpha},\tilde \beta)$ be such that
\begin{equation} \label{eq: new coset}
(\bm \alpha,\beta) (\id,b) = (\id,\tilde b) (\bm{\tilde \alpha}, \tilde \beta).
\end{equation}
Note that $\bm{\tilde \alpha}$, $\tilde \beta$ and $\tilde b$ are such that $\tilde \beta = \tilde b^{-1} \beta b \in S_{|\underline \mu|}$ and $\alpha_i = \tilde \alpha_{\tilde b^{-1}(i)}$ for all $i \in [k]$. In particular, $\tilde \alpha_i = \alpha_{\tilde b(i)}$ for all $i \in [k]$. Then we have 
\begin{align} \notag
\hspace{-0.5em}(\bm \alpha,\beta) \big((\id,b),(v_1,\ldots,v_k; u_1,\ldots,u_{\ell})\big) &= \big((\id,\tilde b),(\bm{ \tilde \alpha},\tilde \beta)(v_1,\ldots,v_k; u_1,\ldots,u_{\ell})\big)\\
&=\big((\id,\tilde b),(\tilde \alpha_1 v_{\tilde \beta^{-1}(1)},\ldots,\tilde \alpha_k v_{\tilde \beta^{-1}(k)}; \tilde \beta u_1,\ldots,\tilde \beta u_{\ell})\big) \notag \\
&=\big((\id,\tilde b),(\alpha_{\tilde b(1)} v_{\tilde \beta^{-1}(1)},\ldots, \alpha_{\tilde b(k)} v_{\tilde \beta^{-1}(k)}; \tilde \beta u_1,\ldots,\tilde \beta u_{\ell})\big).\label{eq: first application of (sigma,pi)}
\end{align}
On the other hand, for the corresponding element $x_{i_1,j_1} \cdots x_{i_t,j_t} \in V_{P,\mathbf Q}$ we have
\begin{align*}
(\bm \alpha,\beta)  x_{i_1,j_1} \cdots x_{i_t,j_t} =   x_{\alpha_{\beta(j_1)}(i_1),\beta(j_1)} \cdots x_{\alpha_{\beta(j_t)}(i_t),\beta(j_t)}.
\end{align*}
We will use the above identity to express $\phi((\bm \alpha, \beta)  x_{i_1,j_1} \cdots x_{i_t,j_t})$ in terms of $\phi(x_{i_1,j_1} \cdots x_{i_t,j_t})$. For $i \in [r]$ let $j \in [t-r]$ be the unique index such that $i \in I_j$, then $P_i$ is assigned basis 
\[
\beta(b(u_j( i))) = \tilde b(\tilde \beta(u_j( i))) = \tilde b ( \tilde u_j( i)),
\]
where we used the definition of $\tilde b,\bm{\tilde \alpha}, \tilde \beta$ from~\eqref{eq: new coset} and we let $\tilde u_a = \tilde \beta u_a$ for each $a \in [\ell]$. For each $i \in [k-r]$ we have that $\tilde v_{i+r}$ is the tabloid of shape $(d)$ with content $(1^d)$, and therefore $\tilde v_{i+r} = v_{\tilde \beta^{-1}(i+r)}$ where we use that $\tilde \beta \in S_{|\underline \mu|}$. For $j \in [t-r]$ and $i \in I_j$ we now consider the content of $\tilde v_{i}$. By definition it corresponds to the assignment of basis elements to the $j$ sets in 
\[
Q_{\tilde u_j^{-1}( i)} = Q_{(\tilde \beta u_j)^{-1}( i)} = Q_{u_j^{-1}(\tilde \beta^{-1}( i))}.
\]
It follows that the content of $\tilde v_{i}$ is the content of $v_{\tilde \beta^{-1}(i)}$. All variables corresponding to $Q_{\tilde u_j^{-1}(i)}$ are assigned basis $\tilde b \tilde u_j (\tilde u_j^{-1}(i)) = \tilde b(i)$ from which it follows that $\alpha_{\tilde b(i)}$ is applied to $v_{\tilde \beta^{-1}(i)}$. 
That is, 
\begin{align*}
\tilde v_{i} = \qquad \ytableausetup{boxsize=1.75em}\begin{ytableau}
\phantom{,}  & \cdots & \phantom{,} \\
w_{\tilde u_j^{-1}( i)}^1 \\
\vdots \\
w_{\tilde u_j^{-1}( i)}^{j} \\
\end{ytableau} &= \qquad \begin{ytableau}
\phantom{,}  & \cdots & \phantom{,} \\
\alpha_{\tilde b(i)} w_{u_j^{-1}( i)}^1 \\
\vdots \\
\alpha_{\tilde b(i)}w_{ u_j^{-1}( i)}^{j} \\
\end{ytableau}  = \alpha_{\tilde b(i)}  v_{{\tilde \beta}^{-1}(i)}.
\end{align*}
This shows, using \eqref{eq: first application of (sigma,pi)}, that 
\begin{align*}
\phi((\bm \alpha,\beta)  x_{i_1,j_1} \cdots x_{i_t,j_t}) &= \big((\id,\tilde b),(\alpha_{\tilde b(1)} v_{\tilde \beta^{-1}(1)},\ldots, \alpha_{\tilde b(k)} v_{\tilde \beta^{-1}(k)}; \tilde \beta u_1,\ldots,\tilde \beta u_{\ell})\big) \\
&= (\bm \alpha,\beta)   \phi(x_{i_1,j_1} \cdots x_{i_t,j_t}),
\end{align*}
which means that $\phi$ respects the action of $S_d \wr S_k$. 
\end{proof}

\section{Computational results} \label{sec: numerics}
Using the representative sets obtained in \cref{sec:reprset sk,sec:reprset wreath} we can symmetry reduce $\sdp(d,k,t)$. That is, we use the representative sets and \cref{prop: symmetry reduction} to obtain equivalent SDPs of smaller size. We furthermore use the $S_d \wr S_k$-invariance of feasible solutions, the MUB relations, and the tracial property to reduce the number of variables. We present computational results for (moderately) small $d,k,t$ in \cref{sec: tables}.\footnote{Recall, the code used is available at \url{https://github.com/MutuallyUnbiasedBases/}.}

\subsection{How to determine the value of~\texorpdfstring{$L$}{L} on monomials} \label{sec: L value}
An essential step in generating our SDPs is to decide which variables to use, i.e., for which $S_d \wr S_k$-orbits should we include a variable? For this we construct a list $\mathcal L$ of values $L(w)$ to use as variables as follows. Let~$\mathcal{R}=\{p_1,\ldots,p_m\}$ be a list of representatives for the $S_d \wr S_k$-orbits of monomials in~$\R\langle x_{i,j} \, | \, i \in [d], j \in [k]\rangle_{=2t}$. We order the monomials in the following way: we view $([d] \times [k])^t$ as $[k]^t \times [d]^t$ and then use the \emph{graded lexicographic order}, i.e., for two monomials $q,p$ we have $q \leq p$ if $\deg(q)<\deg(p)$ or if $\deg(q) = \deg(p)$ and $q$ precedes $p$ in the lexicographic order. This is a well ordering of the noncommutative monomials and any monomial is preceded by only finitely many other monomials. Then, as long as $\mathcal R$ is not empty, we remove the last monomial $p$ from $\mathcal R$ and express $L(p)$ as a linear combination of values~$L(q)$ using substitutions coming from the relations in $\Imub$ in the following order: 
\begin{enumerate}[nolistsep]
    \item If a sequence $x_{i,j} x_{i',j}$ occurs we use \ref{item:1}, i.e.~$x_{i,j} x_{i',j} = \delta_{i,i'} x_{i,j}$, to reduce the degree.
    \item If a base occurs once we use the $S_d \wr S_k$-invariance of $L$ and the POVM constraint \ref{item:2} to reduce the degree by one. 
    \item If a base occurs more than once but at least one of its elements occurs only once, we take the largest such basis element, say $x_{i,j}$, and we replace it by $I-\sum_{i' \neq i} x_{i',j}$. This gives $L(p) = L(p_0) - \sum_{i' \neq i} L(p_{i'})$ where $p_{i'}$ corresponds to replacing $x_{i,j}$ by the identity if $i'=0$ and by $x_{i',j}$ if $i>0$. Using the $S_d \wr S_k$-invariance of $L$ we have $L(p) = L(p_{i'})$ whenever $x_{i',j}$ does not occur elsewhere in $p$. This expresses $L(p)$ as a linear combination of ($L$ of) a monomial of lower degree $p_0$ and monomials $p_{i'}$ for which the number of basis elements occurring only once is reduced by one.
    \item If a sequence $x_{i,j} x_{i',j'} x_{i,j}$ occurs, we use \ref{item:3} to reduce the degree by two. 
    \item If a sequence $x_{i,j}u x_{i,j} v x_{i,j}$ occurs, we use \ref{item:4} if $x_{i,j}v x_{i,j} u x_{i,j}$ results in a monomial that is smaller in the lexicographic order or that can be reduced by \ref{item:3}. 
    \end{enumerate} 
    For each monomial $q$ we moreover use the tracial property of $L$, and the $S_d \wr S_k$-invariance of $L$ to reduce to a lexicographically smallest representative. If $q$ was not present in $\mathcal R$, we add it to $\mathcal R$. If none of the above substitutions can be applied to $p$, then we add~$L(p)$ to the list of variables $\mathcal L$. 
\begin{example} 
Let $d,k \geq 3$. The $S_d \wr S_k$-invariance and the POVM constraint  can be used to show
$L(x_{1,1}x_{1,2}x_{1,3}) = \tfrac{1}{d}\sum_{j=1}^d L(x_{j,1}x_{1,2}x_{1,3}) =\tfrac{1}{d} L(x_{1,2}x_{1,3}) = \tfrac{1}{d^2}$. 
In fact, one has to go to $t=3$ (degree $6$ monomials) to find a free variable, it corresponds to the monomial $(x_{1,1}x_{1,2}x_{1,3})^2$.
\end{example}

The above algorithm in fact does not capture all the linear relations between the values $L(w)$ for $w \in \ncx_{2t}$ that are implied by $\Imub$. Indeed, since we are not working with a Gr\"obner basis of $\Imub$, the above substitution method does not guarantee that $L=0$ on the entire ideal $\Imub$. Computing a Gr\"obner basis is in general a costly procedure. We therefore use the above procedure only to generate a list of variables for the semidefinite program. We then search for ``additional'' linear constraints between these variables by explicitly verifying whether $L=0$ on $\Imub$. That is, using the above procedure, we evaluate $L(qp)$ for, up to symmetry, all polynomials $q$ defining $\Imub$ and all monomials $p$ 
of degree at most $2t-\deg(q)$. If $L(qp)=0$ leads to a non-trivial constraint, we add it to the semidefinite program (if it is linearly independent of previously added constraints). In \cref{table: fullresults,table: i1results} below the resulting number of ``additional'' linear constraints is listed in the sixth column.

\subsection{Tables with computational results} \label{sec: tables}
In \cref{table: introtable} (in the introduction) we presented numerical results obtained by solving~$\sdp(d,k,t)$ for fixed~$d,k,t$ using the symmetry reduction and the above described method of determining the value of $L$ on monomials. The columns list the size (number of rows) $(dk)^{\lfloor t\rfloor}$ of the SDP before symmetry reduction, the number of variables after symmetry reduction, the number of ``additional'' linear constraints generated by the procedure in the previous section, and the sum and maximum of the block sizes after symmetry reduction. 
For the block sizes we moreover use the relations defining $\Imub$ to restrict to a subset of the pairs $(P,\bm Q)$ for which the moment matrix does not contain zero-rows (using relation \ref{item:1}), or linearly dependent rows (using relations \ref{item:1} or \ref{item:3}). The last column indicates the solution status as reported by SDPA-GMP, SDPA-DD or SDPA~\cite{Nakata10,YFFKNN12}. We remark that $\sdp(2,4,4.5)$ is infeasible already due to the ``additional'' linear constraints: they include $L(1)=0$. Below in \cref{table: fullresults} we show additional numerical results for dimension $d=6$ to illustrate the size of the involved SDPs. 
In Table~\ref{table: i1results} we present the same computational results for the relaxation of~$\sdp(d,k,t)$ obtained by working only with monomials in~$\R\langle x_{1,j} \, | \, j \in [k] \rangle_{2t}$.

\begin{table}[ht]\small
\begin{center}  
    \begin{tabular}{| r | r | r | r| r|r| r|r || l |}
    \hline
    $d$ & $k$ & $t$ &  $(dk)^{\lfloor t\rfloor}$&\#vars & \multicolumn{1}{r|}{$\#$linear} & 
    \multicolumn{2}{c||}{ block sizes} & result   \\ \cline{7-8}
     & & & & & constraints & sum & max &   \\\hline
6 & 4 & 4.5 & 331776 & 7 & 0 &  994 & 107 & feasible  \\
6 & 4 & 5 & 7962624 & 27 & 2 &  3697 & 319 & feasible \\
6 & 4 & 5.5 & 7962624 & 43 & 3 &  8049 & 577 & feasible \\\hline  
6 & 7 & 4.5 & 3111696 & 7 & 0 & 1627 & 146 & feasible \\
6 & 7 & 5 & 130691232 & 38 & 2  & 6749 & 389 & feasible \\\ 
6 & 7 & {$5.5$} &  130691232&   62 & 3& 18538 & 1107 & feasible    
\\\hline 
    \end{tabular} 
\end{center}
  \caption{An overview of results for the reduction of the full~$t$-th level moment matrix, using the symmetry group~$S_d \wr S_k$ (for level~$t$) or $S_{d-1} \times (S_d \wr S_{k-1}) $ (for level~$t+\frac12$). }\label{table: fullresults}
\end{table}

\begin{table}[ht]\small
\begin{center}  
    \begin{tabular}{| r | r | r | r| r|r| r|r || l |}
    \hline
    $d$ & $k$ & $t$ & $k^{\lfloor t\rfloor}$&\#vars & \multicolumn{1}{r|}{$\#$linear} &  
    \multicolumn{2}{c||}{ block sizes} & result   \\ \cline{7-8}
     & & & & & constraints & sum & max &   \\\hline
     2 & 4 & 4.5 & 256 & 5 & 0 & 48 & 24 & \textbf{infeasible}   \\
     3 & 5 & 4.5 & 625 & 5 & 0 & 95 & 32 &  \textbf{infeasible} \\
     4 & 6 & 5 & 7776 & 17 & 2 & 364 & 70 & \textbf{infeasible} \\ 
     5 & 7 & 6.5 & 117649 & 306 &49 & 3288 & 640 &  \textbf{infeasible} \\\hline  
     6 & 4 & 7.5 & 16384 &  19 & 5  & 586 & 293 & feasible\\    
     6 & 7 & 6.5 & 117649 & 306 & 49 & 3288 & 640 &  feasible \\
     6 & 8 & 6.5 & 262144 & 306 & 49  & 4096 & 707 &  feasible \\
    \hline\end{tabular} 
\end{center}
  \caption{An overview of the bounds for the submatrix of the~$t$-th level moment matrix indexed by monomials in~$\R\langle x_{1,j} \, | \, j \in [k]\rangle_{2t}$, using the symmetry group~$S_k$ (for level~$t$) or~$S_{k-1}$ (for level~$t+\frac12$).} \label{table: i1results}
\end{table}

\section*{Acknowledgements}

We thank Stefano Pironio and Erik Woodhead for interesting and useful discussions. In particular, we thank Stefano for pointing out that the substitution procedure in \cref{sec: L value} may miss linear constraints, and that infeasibility for~$(d,k)=(2,4)$ follows from these linear constraints. We thank Erik for independently checking some of our code used to reduce the number of variables. Sander Gribling was partially supported by the projects QUDATA (ANR-18-CE47-0010) and QOPT (QuantERA ERA-NET Cofund 2022-25). Most of this research was carried out while Sander Gribling was affiliated with Universit\'e Paris Cit\'e, IRIF, and Sven Polak was affiliated with Centrum Wiskunde \& Informatica, Amsterdam.

\bibliographystyle{alphaurl}
\bibliography{MUBbib.bib}

\clearpage 
\appendix 
\section{\texorpdfstring{$C^*$}{C*}-algebraic formulation of the MUB problem} \label{appA}
The theorem below was proven in~\cite{NPA12}, we give a slightly simpler proof (inspired by their proof). 
\thrmCalg*
\begin{proof}
Let $\{u_{i,1}\}_{i \in [d]}, \{u_{i,2}\}_{i \in [d]}, \ldots, \{u_{i,k}\}_{i \in [d]}$ be  ONB's of $\C^d$. Let $X_{i,j} = u_{i,j} u_{i,j}^*$ be the rank-$1$ projector associated to $u_{i,j}$ for all $i \in [d],j \in [k]$. Then the $C^*$-algebra $\A = \C\langle \mathbf X\rangle$ generated by the $X_{i,j}$, and the Hermitian operators $X_{i,j}$, satisfy the relations \labelcref{item:1,item:2,item:3,item:4}. 

Conversely, suppose there exist $\A$ and Hermitian $X_{i,j} \in \A$ satisfying the relations \labelcref{item:1,item:2,item:3,item:4}. We show how to construct $k$ mutually unbiased bases in dimension $d$. For all $i \in [d],j \in [k]$, let $Z_{ij} \in M_d(\mathcal A)$ 
be defined as\footnote{The choice for $1$ and $2$ here is arbitrary. All that matters is they are fixed and different.} 
\[
Z_{i,j} := d \Big[ X_{1,2} X_{a,1} X_{i,j} X_{b,1} X_{1,2} \Big]_{a,b \in [d]}.
\]
Clearly the matrices $Z_{i,j}$ are Hermitian. We now verify that the matrices $Z_{ij}$ satisfy the relations~\ref{item:1} and~\ref{item:3}. Let $i,i' \in [d], j,j' \in [k]$, we have 
\begin{align*}
(Z_{i,j}Z_{i',j'})_{a,b} &= d^2 \sum_{c \in [d]} X_{1,2} X_{a,1} X_{i,j} X_{c,1} X_{1,2} X_{1,2} X_{c,1} X_{i',j'} X_{b,1} X_{1,2} \\
&\overset{\ref{item:1},\ref{item:3}}{=} d^2\cdot \frac{1}{d} \sum_{c \in [d]} X_{1,2} X_{a,1} X_{i,j} X_{c,1} X_{i',j'} X_{b,1} X_{1,2} \\
&\overset{\ref{item:2}}{=} d X_{1,2} X_{a,1} X_{i,j} X_{i',j'} X_{b,1} X_{1,2}.
\end{align*}
When $j = j'$ the above shows, together with~\ref{item:1} for the $X_{i,j}$'s, that $Z_{i,j}Z_{i', j} = \delta_{i,i'} Z_{i,j}$. 
A similar computation shows that when $j \neq j'$ we have 
$Z_{i,j}Z_{i',j'}Z_{i,j} = \frac{1}{d} Z_{i,j}$.
Finally, using relation~\ref{item:2} we see that 
    $\sum_{a,b \in [d]} \sum_{i \in [d]} (Z_{i,j})_{a,b} = d X_{1,2}$.
It follows from~\ref{item:2} and~\ref{item:3} for the $X_{i,j}$'s that $X_{1,2} \neq 0$. Hence $\sum_{i \in [d]} Z_{i,j} \neq 0$, and since~\ref{item:3} holds for the $Z_{i,j}$, we therefore have that $Z_{i,j} \neq 0$ for all $i,j$. 

Now observe that $\C\langle X_{1,2} X_{a,1} X_{i,j} X_{b,1} X_{1,2}: a,b,i \in [d], j \in [k]\rangle$ is a commutative $C^*$-algebra because of~\ref{item:4}. We will restrict our attention to this sub-algebra and refer to it as $\mathcal A$. A commutative $C^*$-algebra can be viewed as an algebra of continuous functions in the following sense, see, e.g.,~\cite[II.2.2.4]{Blackadar06}. Using the Gelfand representation, any commutative $C^*$-algebra $\mathcal A$ is $*$-isomorphic to $C_0(\hat{\mathcal A})$, where $\hat{\mathcal A}$ is the set of non-zero $*$-homomorphisms from $\mathcal A$ to $\C$ (i.e., the set of characters), and $C_0(\hat{\mathcal A})$ is the set of continuous functions on $\hat{\mathcal A}$ that vanish at infinity ($\hat{\mathcal A}$ is a locally compact Hausdorff space, using the weak-$*$ topology).
We may thus write 
\[
Z_{i,j} := d \Big[ \Theta_{a,b}^{i,j}(\hat a) 
\Big]_{a,b \in [d]}
\]
where $\Theta_{a,b}^{i,j} \in C_0(\hat{\mathcal A})$ and $\hat a \in \hat{\mathcal A}$. 

Now for each $\hat a \in \hat{\mathcal A}$ define $Z_{i,j}^{\hat a} := d [ \Theta_{a,b}^{i,j}(\hat a)]_{a,b \in [d]}$.
Since the multiplication in $C_0(\hat{\mathcal A})$ is pointwise, the relations \ref{item:2} and~\ref{item:3} also hold for the matrices $Z_{i,j}^{\hat a}$ (for $i \in [d],j \in [k]$). Fix $i' \in [d], j' \in [k]$. Since $Z_{i',j'} \neq 0$ there exists an $\hat a$ such that $Z_{i',j'}^{\hat a} \neq 0$. Since~\ref{item:3} holds for the matrices $Z_{i,j}^{\hat a}$ ($i\in[d], j \in [k]$), we know that all of them are non-zero. Together with~\ref{item:1} this implies that each $Z_{i,j}^{\hat a}$ has rank equal to $1$. Thus for each $j \in [k]$, the projectors $Z_{i,j}^{\hat a}$ ($i \in [d]$) define an orthonormal basis. Relation~\ref{item:3} for the $Z_{i,j}$'s implies that these bases are mutually unbiased. 
\end{proof}

\end{document}